\newtheorem{theorem}{Theorem}[section]
\newtheorem{corollary}[theorem]{Corollary}
\newtheorem{lemma}[theorem]{Lemma}
\newtheorem{proposition}[theorem]{Proposition}
\newtheorem{conjecture}[theorem]{Conjecture}
\theoremstyle{definition}
\newtheorem{definition}[theorem]{Definition}
\newtheorem{remark}[theorem]{Remark}
\theoremstyle{remark}
\renewcommand{\theclaim}{\textup{\theclaim}}
\numberwithin{equation}{section}
\def\openone%{\hbox{\upshape \small1\kern-3.3pt\normalsize1}}
\newbox\ipbox
\newcommand{\ip}[2]{\left\langle #1\, , \,#2\right\rangle}
\newcommand{\diracb}[1]{\left\langle #1\mathrel{\mathchoice

{\setbox\ipbox=\hbox{$\displaystyle \left\langle\mathstrut
#1\right.$}

\vrule height\ht\ipbox width0.25pt depth\dp\ipbox}

{\setbox\ipbox=\hbox{$\textstyle \left\langle\mathstrut
#1\right.$}

\vrule height\ht\ipbox width0.25pt depth\dp\ipbox}

{\setbox\ipbox=\hbox{$\scriptstyle \left\langle\mathstrut
#1\right.$}

\vrule height\ht\ipbox width0.25pt depth\dp\ipbox}

{\setbox\ipbox=\hbox{$\scriptscriptstyle \left\langle\mathstrut
#1\right.$}

\vrule height\ht\ipbox width0.25pt depth\dp\ipbox}

}\right. }
\newcommand{\dirack}[1]{\left. \mathrel{\mathchoice

{\setbox\ipbox=\hbox{$\displaystyle \left.\mathstrut
#1\right\rangle$}

\vrule height\ht\ipbox width0.25pt depth\dp\ipbox}

{\setbox\ipbox=\hbox{$\textstyle \left.\mathstrut
#1\right\rangle$}

\vrule height\ht\ipbox width0.25pt depth\dp\ipbox}

{\setbox\ipbox=\hbox{$\scriptstyle \left.\mathstrut
#1\right\rangle$}

\vrule height\ht\ipbox width0.25pt depth\dp\ipbox}

{\setbox\ipbox=\hbox{$\scriptscriptstyle \left.\mathstrut
#1\right\rangle$}

\vrule height\ht\ipbox width0.25pt depth\dp\ipbox}

} #1\right\rangle}
\newcommand{\beq}{\begin{equation}}
\newcommand{\eeq}{\end{equation}}
\newcommand{\cj}[1]{\overline{#1}}
\newcommand{\bz}{\mathbb{Z}}
\newcommand{\brr}{\mathbb{R}}
\newcommand{\bc}{\mathbb{C}}
\newcommand{\bn}{\mathbb{N}}
\def\blfootnote{\xdef\@thefnmark{}\@footnotetext}
\newcommand{\supp}{\operatorname*{supp}}
\newcommand{\norm}[1]{\lvert \lvert#1\rvert \lvert }
\def\-{^{-1}}
\def\ty{\emptyset}
\begin{document}

\title[Self-adjoint extensions of the partial differential operators]{Commuting self-adjoint extensions of the partial differential operators on disconnected sets}

\author{Piyali Chakraborty}
\address{University of Central Florida\br
	Department of Mathematics\br
	4000 Central Florida Blvd.\br
	P.O. Box 161364\br
	Orlando, FL 32816-1364\br
	U.S.A.} \email{Piyali.Chakraborty@ucf.edu}

\author{Dorin Ervin Dutkay${}^*$}
\address{University of Central Florida\br
	Department of Mathematics\br
	4000 Central Florida Blvd.\br
	P.O. Box 161364\br
	Orlando, FL 32816-1364\br
U.S.A.} \email{Dorin.Dutkay@ucf.edu}

\subjclass{Primary 47E05; Secondary 42A16}
\keywords{differential operator, self-adjoint operator, unitary group, Fourier bases, Fuglede conjecture, lattice}

\begin{abstract}
We investigate the existence of commuting self-adjoint extensions of the partial differential operators
$$
D_j=\frac{1}{2\pi i}\frac{\partial}{\partial x_j},
\qquad j=1,\dots,d,
$$
defined initially on $C_0^\infty(\Omega)$, where $\Omega\subset \mathbb{R}^d$ is an arbitrary open set, possibly disconnected and unbounded. This problem originates in a question of Segal and is closely related to Fuglede’s characterization of spectral sets through commuting self-adjoint extensions and orthogonal Fourier bases. While previous results of Fuglede and Pedersen established a complete correspondence for connected domains, the disconnected case presents new phenomena arising from interactions among distinct connected components.

We extend the spectral-theoretic framework to arbitrary open sets and obtain a necessary and sufficient condition for the existence of commuting self-adjoint extensions of the operators $\{D_j\}$. The characterization is expressed in terms of a Radon measure, a multiplicity function, and a measurable matrix field that encode the spectral decomposition of the resulting commuting family. In contrast with the connected case, multiplicities greater than one may occur, and generalized eigenfunctions involve component-dependent coefficients attached to the exponential functions $e_\lambda(x)=e^{2\pi i\lambda\cdot x}$.

We further establish a characterization of spectral sets in terms of strongly continuous unitary representations acting as groups of local translations. For arbitrary open sets, we show that the existence of a group of local translations is equivalent to the spectrality of the domain. This extends the classical connection between spectral pairs, Fourier analysis, and translation symmetries beyond the connected setting.

For open sets of finite measure with finitely many connected components, we prove that the associated spectral measure is atomic and that the joint spectrum is separated. We obtain an orthonormal basis consisting of piecewise exponential functions whose amplitudes may vary from one connected component to another. In the one-dimensional setting, where $\Omega$ is a countable union of intervals, we describe the corresponding unitary groups through a boundary transition matrix governing the propagation of translations across interval endpoints. Finally, for sets that tile $\mathbb{R}^d$ by a discrete subgroup, we characterize the associated pair measures and derive an explicit formula for the corresponding group of local translations, thereby extending a classical result of Fuglede from lattices to arbitrary discrete subgroups.

\end{abstract}
\maketitle
\tableofcontents
\newcommand{\Ds}{\mathsf{D}}
\newcommand{\Dmax}{\mathscr D_{\operatorname*{max}}}
\newcommand{\Dmin}{\Ds_{\operatorname*{min}}}
\newcommand{\dom}{\operatorname*{dom}}

\section{Introduction}
In 1958 Irving Segal posed the following problem to Bent Fuglede: Let $\Omega$ be an open set in $\brr^d$. Consider the partial differential operators $D_1=\frac1{2\pi i}\frac\partial{\partial x_1}$,$\dots$, $D_d=\frac1{2\pi i}\frac\partial{\partial x_d}$ defined on the space of $C^\infty$-differentiable, compactly supported functions on $\Omega$, denoted by $C_0^\infty(\Omega)$. Under what conditions these partial differential operators admit  commuting (unbounded) self-adjoint extensions $H_1,\dots,H_d$ on $L^2(\Omega)$, the commutation being understood in the sense of commuting spectral measures?

In his 1974 seminal paper \cite{Fug74}, Fuglede offered the following answer to Segal's question, for {\it connected domains} $\Omega$, of {\it finite measure} and which are Nikodym domains, meaning that the Poincar\'e inequality is satisfied:

\begin{theorem}\label{th2.1}\cite[Theorem I]{Fug74}	Let $\Omega\subset\brr^d$ be a finite measure open and connected Nikodym region. Denote by 
	$$e_\lambda(x)=e^{2\pi i\lambda\cdot x},\quad (x\in\brr^d,\lambda\in\brr^d).$$
	
	\begin{enumerate}
		\item[(a)] Let $H=(H_1,\dots,H_d)$ denote a commuting family (if any) of self-adjoint extensions $H_j$ of the differential operators $D_j$ on $L^2(\Omega)$, $j=1,\dots,n$. Then $H$ has a discrete spectrum, each point $\lambda\in \sigma(H)$ being a simple eigenvalue for $H$ with the eigenspace $\bc e_\lambda$, and hence $(e_\lambda)_{\lambda\in \sigma(H)}$ is an orthogonal basis for $L^2(\Omega)$. Moreover, the spectrum $\sigma(H)$ and the point spectrum $\sigma_p(H)$ are equal and  $$\sigma(H)=\sigma_p(H)=\{\lambda\in\brr^d : e_\lambda\in\mathscr D( H)\}.$$
		$\mathscr D(H)$ denotes the intersection of the domains of the operators $H_j$: $\mathscr D(H):=\cap_j \mathscr D(H_j)$.
		\item[(b)] Conversely, let $\Lambda$ denote a subset (if any) of $\brr^d$ such that $(e_\lambda)_{\lambda\in\Lambda}$ is an orthogonal basis for $L^2(\Omega)$. Then there exists a unique commuting family $H=(H_1,\dots,H_d)$ of self-adjoint extensions $H_j$ of $D_j$ on $L^2(\Omega)$ with the property that $\{e_\lambda : \lambda\in\Lambda\}\subset\mathscr D(H)$, or equivalently that $\Lambda=\sigma(H)$.
	\end{enumerate}
\end{theorem}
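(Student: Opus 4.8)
My plan is to handle the two directions separately, using throughout the elementary identity $D_je_\lambda=\lambda_je_\lambda$, which exhibits each $e_\lambda$ as a formal joint eigenfunction. The first, structural, observation is that since each $H_j$ is a self-adjoint extension of the symmetric operator $D_j$, taking adjoints in $D_j\subseteq H_j=H_j^*$ yields $H_j\subseteq D_j^*$, where $D_j^*$ is the maximal operator acting as $\tfrac1{2\pi i}\partial_j$ on every $f\in L^2(\Omega)$ whose distributional derivative lies in $L^2(\Omega)$. Hence, whenever $e_\lambda\in\mathscr D(H_j)$ one has $H_je_\lambda=D_j^*e_\lambda=\lambda_je_\lambda$, so $e_\lambda$ is a genuine joint eigenvector and $\{\lambda:e_\lambda\in\mathscr D(H)\}\subseteq\sigma_p(H)$. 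To obtain the reverse inclusion and the simplicity claim, I would show that any joint eigenvector $f$ at $\lambda$ is a scalar multiple of $e_\lambda$: from $H_jf=\lambda_jf$ and $H_j\subseteq D_j^*$ we get $\partial_jf=2\pi i\lambda_jf$ distributionally for each $j$, so $\nabla(e_{-\lambda}f)=0$ on $\Omega$. Here \emph{connectedness} enters decisively, as a distribution with vanishing gradient on a connected open set is constant, whence $e_{-\lambda}f$ is constant and $f\in\bc e_\lambda$. Thus every joint eigenspace equals $\bc e_\lambda$, and distinct $e_\lambda$ are orthogonal, being eigenvectors of the self-adjoint $H_j$ with distinct eigenvalues in at least one coordinate.

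The core of part (a) is discreteness of the spectrum, which I expect to be the main obstacle. I would introduce the positive self-adjoint operator $L=\sum_jH_j^2=\int_{\br^d}|\lambda|^2\,dP(\lambda)$, with $P$ the joint spectral measure provided by the spectral theorem for the commuting family. Its form domain is exactly $\mathscr D(H)=\cap_j\mathscr D(H_j)$, on which $\sum_j\norm{H_jf}^2=(2\pi)^{-2}\norm{\nabla f}^2$; hence $\mathscr D(H)\subseteq W^{1,2}(\Omega)$ with equivalent graph norm. The finite-measure Nikodym hypothesis furnishes precisely the Poincar\'e inequality, and with it the compactness of the embedding $W^{1,2}(\Omega)\hookrightarrow L^2(\Omega)$; this makes the resolvent of $L$ compact, so $L$ has discrete spectrum with finite-dimensional eigenspaces. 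As $L$ is a function of $P$, discreteness transfers to $P$: its support is discrete, $\sigma(H)=\sigma_p(H)$, and each joint eigenspace is finite-dimensional, hence one-dimensional by the previous paragraph. The normalized eigenvectors $e_\lambda/\norm{e_\lambda}$ then form an orthonormal basis, giving both that $(e_\lambda)_{\lambda\in\sigma(H)}$ is an orthogonal basis and that $\sigma(H)=\{\lambda:e_\lambda\in\mathscr D(H)\}$.

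For part (b), given the orthogonal basis $(e_\lambda)_{\lambda\in\Lambda}$ I would diagonalize directly: set $\mathscr D(H_j)=\{f\in L^2(\Omega):\sum_\lambda|\lambda_j|^2|\langle f,e_\lambda\rangle|^2\norm{e_\lambda}^{-2}<\infty\}$ and $H_jf=\sum_\lambda\lambda_j\,\langle f,e_\lambda\rangle\,\norm{e_\lambda}^{-2}e_\lambda$. As multiplication operators with real eigenvalues relative to the fixed orthonormal basis $(e_\lambda/\norm{e_\lambda})$, the $H_j$ are self-adjoint and their spectral measures commute. To see that $H_j$ extends $D_j$, for $f\in C_0^\infty(\Omega)$ an integration by parts (no boundary term, as $f$ is compactly supported in $\Omega$) gives $\langle D_jf,e_\lambda\rangle=\lambda_j\langle f,e_\lambda\rangle$, so the basis expansion of $D_jf$ is exactly $H_jf$. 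For uniqueness, any commuting self-adjoint family with $\{e_\lambda:\lambda\in\Lambda\}\subseteq\mathscr D(H)$ restricts on the linear span of $\{e_\lambda\}$ to the operator $e_\lambda\mapsto\lambda_je_\lambda$, which is essentially self-adjoint because $\{e_\lambda\}$ is an orthogonal basis; its unique self-adjoint extension is the operator just constructed, forcing equality, and $\Lambda=\sigma(H)$ then follows from part (a).

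The step demanding the most care is the discreteness argument: verifying rigorously that the finite-measure Nikodym condition yields a \emph{compact} embedding $W^{1,2}(\Omega)\hookrightarrow L^2(\Omega)$ (Poincar\'e together with a Rellich-type compactness), and then transferring compactness of the resolvent of $L=\sum_jH_j^2$ back to discreteness of $P$ while carefully tracking the domains involved. By contrast, connectedness is used at a single point—forcing the joint eigenspaces to be one-dimensional—which already signals how the disconnected setting studied in this paper must diverge.
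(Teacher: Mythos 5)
Your eigenspace analysis in part (a) (via $H_j\subseteq D_j^*$ and the fact that a distribution with vanishing gradient on a connected open set is constant) and your part (b) construction-plus-uniqueness argument are sound; note, for context, that the paper does not actually prove Theorem \ref{th2.1} --- it quotes it from \cite{Fug74} --- and the closest it comes is the generalization in Theorems \ref{thi2} and \ref{thf1}. The genuine gap is in the step you yourself flagged as the core of part (a): you claim that the finite-measure Nikodym hypothesis yields compactness of the embedding $W^{1,2}(\Omega)\hookrightarrow L^2(\Omega)$, and hence compact resolvent for $L=\sum_j H_j^2$. That implication is false. The Poincar\'e inequality says exactly that $0$ is an isolated point of the spectrum of the Neumann Laplacian (a spectral gap above $0$), whereas compactness of the embedding says that this operator has purely discrete spectrum, and the first condition is strictly weaker: there are bounded, connected open sets (rooms-and-passages constructions; see Amick's work on Rellich's theorem versus the Poincar\'e inequality, and Hempel--Seco--Simon on Neumann Laplacians of singular bounded domains) on which the Poincar\'e inequality holds but the embedding $W^{1,2}(\Omega)\hookrightarrow L^2(\Omega)$ is not compact, the Neumann Laplacian having nonempty essential spectrum. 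So Rellich-type compactness is simply not available under the stated hypotheses, and your route to discreteness of $\sigma(H)$ --- and with it the completeness of the eigenfunctions, which in your scheme also rests on pure atomicity of the joint spectral measure --- collapses. (Compactness of $\mathscr D(H)\hookrightarrow L^2(\Omega)$ in the graph norm is true \emph{a posteriori}, but it is a consequence of the discreteness you are trying to prove; it cannot be obtained by restricting a compact embedding of all of $W^{1,2}(\Omega)$, since that embedding need not be compact.)

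The discreteness must be obtained differently, and the route taken by this paper's own generalization (Section \ref{sec3}, Theorem \ref{thf1}, specialized to $p(\Omega)=1$, hence $m(\lambda)=1$) shows how to repair your argument without any compactness: diagonalize the commuting family as in Theorem \ref{thi2}, so that the generalized eigenfunctions at $\mu$-a.e.\ $\lambda\in\supp\mu$ are, by your own connectedness argument, constant multiples of $e_\lambda$. Because $\mathfrak m(\Omega)<\infty$, each $e_\lambda$ is an honest element of $L^2(\Omega)$, and one shows $U(t)e_\lambda=e^{2\pi i\lambda\cdot t}e_\lambda$ for $\mu$-a.e.\ $\lambda\in\supp\mu$; computing $\ip{U(t)e_\lambda}{e_\gamma}$ two ways forces $e_\lambda\perp e_\gamma$ for distinct $\lambda,\gamma\in\supp\mu$. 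Since $\ip{e_\lambda}{e_\gamma}=\hat\chi_\Omega(\gamma-\lambda)$ is continuous and $\hat\chi_\Omega(0)=\mathfrak m(\Omega)>0$, the support of $\mu$ is uniformly separated; and since the transform of $e_\lambda$ is supported at the single point $\lambda$ while $\norm{e_\lambda}>0$, each point of $\supp\mu$ is an atom. This gives discreteness, simplicity, and the orthogonal-basis claim at once, and it also explains why the Nikodym hypothesis is not what drives part (a) (consistent with its later removal in \cite{Ped87}), whereas your argument asks the Nikodym condition to do work it cannot do.
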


\begin{definition}
	\label{defsp}
	A subset $\Omega$ of $\brr^d$ of finite Lebesgue measure is called {\it spectral} if there exists a set $\Lambda$ in $\brr^d$, such that the family of exponential functions 
	$$\{e_\lambda : \lambda\in\Lambda\}$$
	forms an orthogonal basis for $L^2(\Omega)$. In this case, we say that $\Lambda$ is a {\it spectrum} for $\Omega$. 
	
	We say that a set $\Omega$ {\it tiles} $\brr^d$ by translations if  there exists a set $\mathcal T$ in $\brr^d$ such that $\{\Omega +t : t\in\mathcal T\}$ is a partition of $\brr^d$ up to measure zero. We also call $\mathcal T$ a {\it tiling set} for $\Omega$ and we say that $\Omega$ {\it tiles by} $\mathcal T$. 
\end{definition}

In these terms, when $\Omega$ is connected, of finite measure and a Nikodym domain, there exist commuting self-adjoint extensions of the partial differential operators $\{D_j\}$ if and only if $\Omega$ is spectral. Since this condition is a bit vague, Fuglede proposed his famous conjecture:
\begin{conjecture}{\bf[Fuglede's Conjecture]}
	\label{conFu}
	A subset $\Omega$ of $\brr^d$  of finite Lebesgue measure is spectral if and only if it tiles $\brr^d$ by translations.
\end{conjecture}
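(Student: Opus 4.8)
The statement is a biconditional between a spectral property and a geometric tiling property, so the natural plan is to build a Fourier-analytic dictionary that translates each side into a condition on the single function $\fo{1_\Omega}(\xi)=\int_\Omega e^{-2\pi i\xi\cdot x}\,dx$, and then to match the two resulting conditions. First I would record the two characterizations. On the spectral side, $\{e_\lambda:\lambda\in\Lambda\}$ is orthogonal in $L^2(\Omega)$ precisely when $\fo{1_\Omega}(\lambda-\lambda')=0$ for all distinct $\lambda,\lambda'\in\Lambda$, and it is moreover complete precisely when the frame (Parseval) identity $\sum_{\lambda\in\Lambda}\abs{\fo{1_\Omega}(\xi-\lambda)}^2=\abs{\Omega}^2$ holds for a.e.\ $\xi\in\br^d$ (obtained by testing Parseval on $f=e_\xi$). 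On the tiling side, $\Omega$ tiles by $\mathcal T$ exactly when $\sum_{t\in\mathcal T}1_\Omega(x-t)=1$ for a.e.\ $x$, an identity whose Fourier transform again forces the zeros of $\fo{1_\Omega}$ to sit along the relevant difference set. Thus both halves of the conjecture become statements about the zero set of $\fo{1_\Omega}$ together with a tight partition/frame condition, and the objective is to convert one into the other.

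For the implication ``$\Omega$ tiles $\Rightarrow$ $\Omega$ is spectral'' I would start from the tiling identity and manufacture a candidate spectrum dual to the tiling set. When $\mathcal T=L$ is a lattice, Poisson summation applied to $\sum_{t\in L}1_\Omega(x-t)=1$ shows that $\fo{1_\Omega}$ vanishes on the nonzero points of the dual lattice $L^\circ$ and that $\abs{\Omega}$ equals the covolume of $L$; taking $\Lambda=L^\circ$ then delivers both the orthogonality relations and the frame identity recorded above, so $L^\circ$ is a spectrum. The general case amounts to producing, from an arbitrary tiling set $\mathcal T$, a ``dual'' set $\Lambda$ playing the role of $L^\circ$, extracted from the zero structure of $\fo{1_\Omega}$ that the tiling identity imposes.

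For the converse ``$\Omega$ is spectral $\Rightarrow$ $\Omega$ tiles'' I would run the argument in reverse. The completeness identity $\sum_{\lambda\in\Lambda}\abs{\fo{1_\Omega}(\xi-\lambda)}^2=\abs{\Omega}^2$ says that the $\Lambda$-translates of $\abs{\fo{1_\Omega}}^2$ partition the constant $\abs{\Omega}^2$, i.e.\ $\abs{\fo{1_\Omega}}^2$ tiles frequency space by $\Lambda$. The task is then to transfer this spectral-side partition into a genuine geometric tiling of $\br^d$ by $\Omega$, again by producing a tiling set dual to $\Lambda$ out of the common zero set of $\fo{1_\Omega}$; in the lattice case this is exactly the reverse Poisson-summation step, with $\mathcal T=\Lambda^\circ$.

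The main obstacle is precisely this duality in the absence of additional structure. For lattice sets, and more generally for periodic sets, the passage is clean because Poisson summation (respectively Fourier series) supplies an exact dual object and converts each identity into the other termwise. For an arbitrary, possibly aperiodic tiling or spectral set, however, there is no Poisson-type summation formula available: the zero set of $\fo{1_\Omega}$ need not be a group, nor even locally finite in any controlled way, and extracting from it a dual set with the correct density and arithmetic is the entire content of the problem. I expect this step --- constructing the dual set for genuinely aperiodic $\mathcal T$ or $\Lambda$ --- to be the crux on which the argument turns, and it is the part I would attack last and with the greatest care.
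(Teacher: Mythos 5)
There is a fundamental problem here that precedes any technical assessment of your plan: the statement you are trying to prove is Conjecture \ref{conFu}, and the paper offers no proof of it because none exists --- it is stated as a conjecture, and the paper itself records that it is \emph{false} in dimensions $3$ and higher \cite{Tao04, FMM06}. Tao constructed a spectral set of finite measure that does not tile (originally in dimension $5$, later pushed down to dimension $3$), and examples of tiles that are not spectral were subsequently found as well, so \emph{neither} implication of the biconditional holds for arbitrary finite-measure sets $\Omega\subset\br^d$ when $d\geq 3$. The conjecture remains open only in dimensions $1$ and $2$, and is known to hold under extra hypotheses, e.g.\ for convex domains \cite{LM22}.

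Your preparatory reductions are all correct and standard: orthogonality of $\{e_\lambda:\lambda\in\Lambda\}$ is equivalent to $\fo{\chi_\Omega}(\lambda-\lambda')=0$ for distinct $\lambda,\lambda'\in\Lambda$; completeness is equivalent to the tight identity $\sum_{\lambda\in\Lambda}\abs{\fo{\chi_\Omega}(\xi-\lambda)}^2=\mathfrak m(\Omega)^2$ for a.e.\ $\xi$ (so spectrality of $\Omega$ is exactly a tiling of frequency space by $\abs{\fo{\chi_\Omega}}^2$); and in the lattice case Poisson summation converts each side into the other, which is precisely Fuglede's theorem for full-rank lattices, restated in this paper as Theorem \ref{thfula} (and generalized to discrete subgroups in Theorem \ref{th5.9}). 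But the step you correctly identified as the crux --- extracting a dual set with the right density and arithmetic from the zero set of $\fo{\chi_\Omega}$ when the tiling set or spectrum is aperiodic --- is not merely the hardest step; it is provably impossible to carry out in general, because the counterexamples above show the dictionary between $\Lambda$-tilings of $\abs{\fo{\chi_\Omega}}^2$ and $\mathcal T$-tilings by $\Omega$ genuinely breaks down outside the (quasi-)periodic setting. No amount of care at that step can close the argument; any correct work on this circle of ideas must either restrict the class of sets (convex bodies, unions of intervals, lattice tiles) or restrict the dimension.
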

The conjecture was proved to be false in dimensions 3 and higher \cite{Tao04, FMM06}, but it is true for convex domains \cite{LM22}.

In \cite{Ped87}, Steen Pedersen improved Fuglede's result: he removed not only the Nikodym restriction but also the finite measure condition! Of course, the definition of a spectral set had to be modified for sets of infinite measure (because, in this case, the functions $e_\lambda$ are not $L^2$-integrable).

\begin{definition}\label{defp1}
	For a function $f\in L^1(\brr^d)$, let $\hat f$ be the classical Fourier transform 
	$$\hat f(\lambda)=\int_{\brr^d}f(t) e^{-2\pi i\lambda\cdot t}\,dt,\quad(\lambda\in\brr^d).$$

	Let $\Omega$ be a measurable subset of $\brr^d$ and let $\mu$ be a positive Radon measure on $\brr^d$. We will say that $(\Omega,\mu)$ is a {\it spectral pair} if (1) for each $f\in L^1(\Omega)\cap L^2(\Omega)$, the continuous function $\lambda\rightarrow \hat f(\lambda)=({f},{e_\lambda})$ satisfies $\int |\hat f|^2\,d\mu<\infty$, and (2) the map $f\rightarrow \hat f$ of $L^1(\Omega)\cap L^2(\Omega)\subset L^2(\Omega)$ into $L^2(\mu)$ is isometric and has dense range. 
	
	This map then extends by continuity to an isometric isomorphism
	$$\mathscr F:L^2(\Omega)\rightarrow L^2(\mu).$$

	The set $\Omega$ is called {\it spectral} if there is a measure $\mu$ such that $(\Omega,\mu)$ is a spectral pair; we also say that $\mu$ is a {\it pair measure} for $\Omega$.  When $\Omega$ has finite measure, the two definitions of a spectral set coincide \cite[Corollary 1.11]{Ped87}.
\end{definition}

With this definition of a spectral set, Pedersen improved Fuglede's theorem, by removing the Nikodym restriction, and the finite measure restriction, but keeping the condition that $\Omega$ be connected. 
\begin{theorem}\label{thp5}\cite[Theorem 2.2]{Ped87}
	Let $\Omega$ be an open and connected subset of $\brr^d$. The partial differential operators $\{D_j\}$ have commuting self-adjoint extensions if and only if $\Omega$ is a spectral set. 
\end{theorem}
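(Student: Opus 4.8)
The statement is an equivalence whose two implications are of very different character. The reverse implication (spectral $\Rightarrow$ commuting extensions) does not use connectedness, and is the one I would dispatch first. Suppose $(\Omega,\mu)$ is a spectral pair with associated isometric isomorphism $\mathscr F\colon L^2(\Omega)\to L^2(\mu)$, $\mathscr F f=\hat f$. On $L^2(\mu)$ the multiplication operators $M_j\colon\phi(\lambda)\mapsto\lambda_j\phi(\lambda)$, equipped with their natural maximal domains, are commuting self-adjoint operators (real multiplication operators on an $L^2$ space have commuting spectral measures). Transporting them back, I would set $H_j:=\mathscr F^{-1}M_j\mathscr F$, which inherit self-adjointness and commutativity of spectral measures from the $M_j$. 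To see that $H_j$ extends $D_j$, take $f\in C_0^\infty(\Omega)$: since $f$ vanishes near $\partial\Omega$, integration by parts produces no boundary term and $\widehat{\partial_{x_j}f}(\lambda)=2\pi i\lambda_j\hat f(\lambda)$, hence $\mathscr F(D_jf)=\lambda_j\hat f=M_j\mathscr Ff$, i.e. $D_jf=H_jf$. Thus $\{H_j\}$ is the required commuting family.

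The forward implication is the substantive one and is where connectedness enters. Given commuting self-adjoint extensions $H_1,\dots,H_d$, I would first invoke the spectral theorem for a finite commuting family (equivalently, Stone's theorem applied to the strongly continuous $d$-parameter group $U(t)=\exp(2\pi i\sum_j t_jH_j)$) to produce a joint projection-valued measure $E$ on $\br^d$ with $H_j=\int\lambda_j\,dE(\lambda)$ and $U(t)=\int e^{2\pi i t\cdot\lambda}\,dE(\lambda)$. The crucial geometric input is that $U$ acts locally as translation. Fix $g\in C_0^\infty(\Omega)$ and $t\in\br^d$ such that all translates $\supp g+st$, $0\le s\le 1$, remain inside $\Omega$. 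Then the curve $s\mapsto w(s):=g(\cdot+st)$ lies in $C_0^\infty(\Omega)\subset\mathscr D(H_j)$ and satisfies $\dot w(s)=\sum_j t_j\partial_{x_j}w(s)=2\pi i\sum_j t_jH_jw(s)$, using $H_j\supset D_j$ on $C_0^\infty(\Omega)$; since $s\mapsto U(st)g$ is the unique solution of this same abstract Cauchy problem with initial value $g$, the two curves coincide and $U(t)g=g(\cdot+t)$.

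With the local translation property in hand, connectedness pins down the spectral picture. Any joint generalized eigenvector $\psi$ of $H$ with eigenvalue $\lambda$ satisfies the distributional system $\partial_{x_j}\psi=2\pi i\lambda_j\psi$ on $\Omega$, whose solutions are locally constant multiples of $e_\lambda$; because $\Omega$ is connected the constant is global, forcing $\psi\in\bc\,e_\lambda|_\Omega$. Thus the joint spectrum is simple, each fiber of the spectral decomposition being spanned by the single exponential $e_\lambda$. I would then run the generalized eigenfunction (nuclear) spectral expansion: the direct-integral decomposition furnished by $E$ takes the form $L^2(\Omega)\cong\int^\oplus_{\br^d}\bc\,e_\lambda\,d\mu(\lambda)$ for a scalar Plancherel measure $\mu$, under which the component of $f$ along the fiber at $\lambda$ is exactly $\hat f(\lambda)=\langle f,e_\lambda\rangle$. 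Reading off the Plancherel identity $\|f\|_{L^2(\Omega)}^2=\int_{\br^d}|\hat f(\lambda)|^2\,d\mu(\lambda)$ for $f\in L^1(\Omega)\cap L^2(\Omega)$, together with density of the range, shows that $(\Omega,\mu)$ is a spectral pair, so $\Omega$ is spectral.

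I expect the main obstacle to be the rigorous execution of the last paragraph in the infinite-measure regime, where genuine $L^2$-eigenfunctions need not exist and the joint spectrum may be purely continuous. Passing from \emph{local translation} to \emph{the spectral fibers are one-dimensional and carried by} $e_\lambda$ requires the generalized eigenfunction machinery (a rigged-Hilbert-space or direct-integral argument) rather than the elementary eigenvalue count available in Fuglede's finite-measure Theorem~\ref{th2.1}. The connectedness of $\Omega$ is precisely what upgrades the local constancy of solutions of $\partial_{x_j}\psi=2\pi i\lambda_j\psi$ to a global one-dimensionality statement, and controlling the domains carefully enough that $U(st)g$ is provably a genuine translate (and not merely an $L^2$-limit of such) is the technical heart of the local-translation step.
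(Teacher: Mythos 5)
Your proposal is correct and takes essentially the same route as the paper's machinery: the reverse direction is Theorem \ref{thi2}(ii) specialized to $p(\Omega)=1$, $C\equiv\mathbf{1}$, and the forward direction is exactly the Pedersen-style generalized-eigenfunction argument of Theorem \ref{thi2}(i) --- nuclear (Maurin) spectral theorem, the distributional equation $\tfrac{1}{2\pi i}\partial_{x_j}\psi=\lambda_j\psi$ forcing each generalized eigenvector to be a locally constant multiple of $e_\lambda$ (a single constant by connectedness, hence $m(\lambda)=1$), then absorbing the normalization $|c(\lambda)|^2$ into $\mu$ to obtain the Plancherel identity and dense range. The only deviation is your local-translation/Cauchy-problem step, which is correct but not actually used by your subsequent fiber argument; the paper keeps that characterization separate (Theorems \ref{thai} and \ref{thi5}).
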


Also, the existence of commuting self-adjoint extensions of the operators $\{D_j\}$ can be formulated in terms of unitary groups on $L^2(\Omega)$ that act locally as translations. Indeed, if $\{H_j\}$ are some commuting self-adjoint operators on a Hilbert space $\mathfrak H$, then using functional calculus
\begin{equation}
	\label{equ}
	U(t_1,\dots,t_d)=\exp(2\pi i(t_1 H_1+t_2H_2+\dots t_dH_d)),\quad( (t_1,\dots,t_d)\in\brr^d),
\end{equation}
defines a strongly continuous unitary  group representation of $\brr^d$ on the Hilbert space $\mathfrak H$. The converse is also true, by the Generalized Stone Theorem: each strongly continuous unitary representation of $\brr^d$ is of this form, for some commuting self-adjoint operators $\{H_j\}$. In the particular case when the operators $\{H_j\}$ are commuting self-adjoint extensions of the partial differential operators $\{D_j\}$, the group representation $\{U(t)\}$ has a certain property, called the {\it integrability property}, that it behaves  as translation inside each connected component, at small scales:

\begin{definition}
	\label{defp5}
	Let $\Omega$ be an open subset of $\brr^d$. We say that $\Omega$ has {\it the integrability property} if there exists a strongly continuous unitary representation $U$ of $\brr^d$ on $L^2(\Omega)$ which satisfies the following condition: for every $x$ in $\Omega$, for all neighborhoods $V$ of $x$ in $\Omega$ and all $\epsilon>0$ such that $V+t\subset\Omega$ for all $t\in\brr^d$ with $\|t\|<\epsilon$, we have 
	$$(U(t)f)(y)=f(y+t),\mbox{ for all $y\in V$, $t\in\brr^d$ with $\|t\|<\epsilon$ and }f\in L^2(\Omega).$$
	We also say that $\{U(t)\}$ has the {\it integrability property}.
	
For a function $f$ on $\brr^d$ and $t\in \brr^d$, we use the notation:
	\begin{equation}
		\label{eqT}
		T(t)f(x)=f(x+t),\quad (x+t\in \mbox{domain}(f)).
	\end{equation}
	
\end{definition}

Palle Jorgensen and Steen Pedersen proved the following equivalence:

\begin{theorem}\label{thai} \cite[Lemma 1]{Jor82},\cite[Proposition 1.2]{Ped87}, \cite[Theorem 5.7]{CDJ25} Let $\Omega$ be an open set in $\brr^d$. The following statements are equivalent:
	\begin{enumerate}
		\item There exists $H=(H_1,\dots,H_d)$ a commuting family of self-adjoint restrictions $H_j$ of $D_j$ on $L^2(\Omega)$. 
		\item The set $\Omega$ has the integrability property. 
	\end{enumerate}
	The correspondence $H\leftrightarrow U(t)$ is given by the Generalized Stone Theorem,
	$$	U(t_1,t_2,\dots,t_d)=\exp\left(2\pi i(t_1H_1+t_2H_2+\dots +t_d H_d)\right)$$ for $(t_1,t_2,\dots,t_d)\in\brr^d)$.
	
\end{theorem}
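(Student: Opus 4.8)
The plan is to treat the Generalized Stone Theorem as a dictionary between commuting families $H=(H_1,\dots,H_d)$ and strongly continuous unitary representations $U$ of $\br^d$, and to check that, under this dictionary, the algebraic condition ``$H_j$ extends $D_j$'' is equivalent to the geometric condition ``$U$ acts locally as translation.'' The main actors are the one-parameter subgroups $U_j(s):=U(se_j)=\exp(2\pi i sH_j)$ and the translation operators $T(t)$ of \eqref{eqT}. In both directions I would first establish the link between $U(t)$ and $T(t)$ on the dense core $C_0^\infty(\Omega)$, and then promote it to all of $L^2(\Omega)$ by a duality argument exploiting $U(t)^*=U(-t)$.

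For (1)$\Rightarrow$(2): given commuting self-adjoint extensions $H_j\supset D_j$, Stone's theorem produces $U(t)=\exp\bigl(2\pi i\sum_j t_jH_j\bigr)$. Fixing $\phi\in C_0^\infty(\Omega)$ and a vector $t$ with $\supp\phi-st\subset\Omega$ for all $s\in[0,1]$, I would first show $U(t)\phi=T(t)\phi$ by introducing the auxiliary curve $h(s):=U(-st)\,T(st)\phi$, $s\in[0,1]$. Since $T(st)\phi\in C_0^\infty(\Omega)\subset\dom(H_j)$ and $H_j$ extends $D_j$, one has $H_jT(st)\phi=\frac1{2\pi i}\partial_{x_j}T(st)\phi$; differentiating $h$ by the product rule and using that $U(-st)$ commutes with $\sum_j t_jH_j$ gives $h'(s)\equiv 0$, hence $h(1)=h(0)$, i.e. $U(t)\phi=T(t)\phi$. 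To obtain the integrability property for arbitrary $f\in L^2(\Omega)$ on a neighborhood $V$ of $x$ with $V+t\subset\Omega$ for $\|t\|<\epsilon$, I would pair against an arbitrary $\eta\in C_0^\infty(V)$: from $\langle U(t)f,\eta\rangle=\langle f,U(-t)\eta\rangle$ and the test-function identity applied to $\eta$ (whose translated supports $\supp\eta+st$ lie in $\Omega$), a change of variables yields $\langle U(t)f,\eta\rangle=\langle T(t)f,\eta\rangle$ over $V$. As $\eta$ ranges over $C_0^\infty(V)$ this forces $(U(t)f)(y)=f(y+t)$ a.e. on $V$, which is exactly Definition \ref{defp5}.

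For (2)$\Rightarrow$(1): given a representation $U$ with the integrability property, the Generalized Stone (SNAG) Theorem supplies commuting self-adjoint generators $H_j$ with $U(t)=\exp\bigl(2\pi i\sum_j t_jH_j\bigr)$, commutativity being automatic since all $H_j$ come from the single joint projection-valued measure of $U$. It then remains to verify $H_j\supset D_j$, i.e. that every $\phi\in C_0^\infty(\Omega)$ lies in $\dom(H_j)$ with $H_j\phi=D_j\phi$. Here the duality trick does the real work: for arbitrary $\eta\in C_0^\infty(\Omega)$ and $|s|$ below a uniform threshold $\epsilon_0$ (obtained by covering the compact set $\supp\phi$ by finitely many integrability neighborhoods and taking the minimum of the associated radii), the local identity $(U_j(-s)\eta)(y)=\eta(y-se_j)$ holds for $y$ in a neighborhood of $\supp\phi$, so that $\langle U_j(s)\phi,\eta\rangle=\langle\phi,U_j(-s)\eta\rangle=\langle T(se_j)\phi,\eta\rangle$ after translating variables. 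Density of $C_0^\infty(\Omega)$ then gives the global equality $U_j(s)\phi=T(se_j)\phi$ for $|s|<\epsilon_0$, and letting $s\to 0$ in $\frac1s\bigl(T(se_j)\phi-\phi\bigr)\to\partial_{x_j}\phi$ in $L^2(\Omega)$ shows $\phi\in\dom(H_j)$ with $H_j\phi=\frac1{2\pi i}\partial_{x_j}\phi=D_j\phi$.

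The main obstacle is the passage from the dense core $C_0^\infty(\Omega)$ to all of $L^2(\Omega)$. One would like $U_j(s)$ to be genuinely ``local,'' but the integrability property only controls $U(t)f$ on the prescribed neighborhood $V$ and says nothing a priori about where the remaining mass of $U(t)f$ goes, so the $L^2$-difference-quotient computation cannot be closed by pointwise estimates alone. The resolution in both implications is to pair with an arbitrary test function and move the group element onto it via $U(t)^*=U(-t)$; the only places demanding genuine care are the uniform choice of $\epsilon_0$ over the compact support (a finite-cover argument) and the justification of the product-rule differentiation of $s\mapsto U(-st)T(st)\phi$, which rests on $s\mapsto T(st)\phi$ being a graph-norm-continuous $\dom(H_j)$-valued curve.
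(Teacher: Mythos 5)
Your proposal is correct, and it is essentially the canonical argument: the paper itself offers no proof of Theorem \ref{thai} (it quotes the result from \cite{Jor82}, \cite{Ped87}, \cite{CDJ25}, reading ``restrictions'' as the evident typo for ``extensions,'' as you did), and your two steps --- the vanishing derivative of $s\mapsto U(-st)T(st)\phi$ to get $U(t)\phi=T(t)\phi$ on $C_0^\infty(\Omega)$, then the difference quotient $\frac1s(T(se_j)\phi-\phi)\to\partial_{x_j}\phi$ to identify $D_j\subset H_j$ --- are exactly the Jorgensen--Pedersen proof cited there. Your duality device for passing from the dense core to all of $L^2(\Omega)$, pairing with $\eta\in C_0^\infty(V)$ and moving the group element across via $U(t)^*=U(-t)$, is moreover precisely the technique this paper uses in its own proof of Theorem \ref{thi5}(i), so the approach matches the source material in every essential respect.
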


It is important to note here, that Theorem \ref{thai} is true, even for {\it disconnected} domains $\Omega$.

One of the main goals of our paper is to extend these results to arbitrary open sets in $\brr^d$, possibly disconnected, possibly unbounded. In the case of dimension one, detailed analyses can be found: for unions of two finite intervals in \cite{JPT12,JPT13}, for finite unions of finite intervals in \cite{DJ15b, DJ23, CDJ25}, for finite unions of intervals the form $$\Omega=(-\infty,\beta_1)\cup\bigcup_{j=1}^n(\alpha_j,\beta_j)\cup (\alpha_{n+1},\infty)$$ in \cite{JPT15}.

The paper is structured as follows: in Section \ref{sec2} we extend Theorem \ref{thp5} to include the case of disconnected open sets in $\brr^d$ and we obtain a necessary and sufficient condition for the existence of commuting self-adjoint extensions of the partial differential operators $\{D_j\}$ (Theorem \ref{thi2}). Then we characterize arbitrary, possibly disconnected open subsets of $\brr^d$ which are spectral in terms of the existence of unitary groups of local translations (Definition \ref{deflt}), that is groups that act as translations whenever the translation stays inside $\Omega$: $U(t)f(x)=f(x+t)$ if both $x$ and $x+t$ are inside $\Omega$ (Theorem \ref{thi5}).

 In Section \ref{sec3} we focus on the case when $\Omega$ has finite measure and finitely many components. We show that, in this case, the spectral measure of $\{H_j\}$ is atomic, the spectrum is separated (Definition \ref{defsep}) and there is an orthonormal basis for $L^2(\Omega)$ of functions of the form $\sum_{\Gamma} c_\Gamma\chi_\Gamma e_\lambda$ (that is, different constants $c_\Gamma$, on different connected components $\Gamma$ of $\Omega$, but the same exponential function $e_\lambda$) with $\lambda$ in the joint spectrum of $\{H_j\}$ (Theorem \ref{thf1}); $\chi_\Gamma$ denotes the characteristic function of the set $\Gamma$.
 
 In Section \ref{sec4} we concentrate on the one-dimensional case $d=1$, when $\Omega$ is a possibly countable union of intervals $\Omega=\cup_{i\in I}(\alpha_i,\beta_i)$.
 As we show in Theorem~\ref{th4.1}, under the condition that the intervals have a positive lower bound on their lengths, any unitary group with the integrability property---defined as acting as a translation within each interval---undergoes a transition at the endpoints governed by a unitary ``boundary'' matrix \( B \). Specifically, when the point reaches an endpoint, say \( \alpha_i \), it splits into several points \( \beta_k \), \( k \in I \), with transition probabilities given by \( |b_{i,k}|^2 \). Then we find the relation between $\mu$, $m$, $C$ from Theorem \ref{thi2} and the boundary matrix $B$ (Theorem \ref{thca}). As corollaries we obtain that if there are finitely many intervals and just one of them is unbounded, then no self-adjoint extensions exist (Corollary \ref{cor4.4}); also if there are finitely many intervals and two of them are unbounded and the set $\Omega$ is spectral, then $\Omega$ has to be $\brr$ minus finitely many points (Theorem \ref{th4.5}), a result that can be found also in \cite[Corollary 2.7]{JPT15} with a different proof. These results extend some of the conclusions in \cite{JPT15, DJ15b,DJ23,CDJ25} by allowing infinitely many intervals.  
 
 In Section \ref{sec5}, we examine the scenario where the set $\Omega$ tiles $\mathbb{R}^d$ using a discrete subgroup $\mathcal{T}$ of $\mathbb{R}^d$. Theorem \ref{th5.9} establishes that this tiling property is equivalent to the existence of a pair measure for $\Omega$ supported on the dual set $\mathcal{T}^*$ (as defined in Definition \ref{defdu}). Furthermore, Proposition \ref{prut} provides an explicit formula for the corresponding group of local translations $\{U(t)\}$ in this context. The results generalize one of Fuglede's conclusions from the original paper \cite{Fug74} (Theorem \ref{thfula}) where he proves these for {\it full-rank} lattices in $\brr^d$.

\section{Self-adjoint extensions and unitary groups}\label{sec2}

In our first theorem, we follow Pedersen's approach \cite{Ped87} to present a necessary and sufficient condition for the existence of commuting self-adjoint extensions of the partial differential operators $\{D_j\}$. The idea of the proof is: by the Spectral Theorem, if $\{H_j\}$ are these extensions, then there is a transformation $\mathcal F$ that turns them into multiplications by the component functions $\lambda_j$ on $L^2(\brr^d,\mu,m)$, with some multiplicity $m$ and for some measure $\mu$. This is a generalization of the fact that the Fourier transform changes differentiation into multiplication. Due to the particular structure of $\brr^d$,  pointwise, the components of the transformation $\mathcal F$ behave like distributions $e_k(\lambda)$, for fixed $\lambda\in\brr^d$ and $k\leq m(\lambda)$, 
$$(\mathcal F\varphi)_k(\lambda)=(\varphi, e_k(\lambda)), \quad(\varphi\in C_0^\infty(\Omega) ).$$
For a test function $\varphi\in C_0^\infty(\Omega)$ and a distribution $\delta$ on $\Omega$, we use the notation 
$$(\varphi,\delta)=\cj \delta(\varphi)=\cj{\delta(\cj\varphi)},$$
	to be linear in the first variable and conjugate linear in the second variable, so that if $\delta$ is a function and the integration makes sense, then $(\varphi,\delta)=\int_\Omega \varphi\cj \delta\,dx.$
	
	Then, because $\mathcal F$ changes $H_j$ into multiplication by $\lambda_j$, the distribution $e_k(\lambda)$ satisfies a simple differential equation which shows that it must be the exponential functions $e_\lambda$ with possibly different multiplicative constants on each connected component $\Gamma$ of $\Omega$. These can be thought of as generalized eigenfunctions of the operators $H_j$: $H_je_k(\lambda)=\lambda_j e_k(\lambda)$. 

Our improvement on Pedersen's result consists in that we allow $\Omega$ to be disconnected; this forces us to introduce some extra constants $C_{k,\Gamma}(\lambda)$ which depend on the components $\Gamma$ of $\Omega$, for each $\lambda$ in the joint spectrum of $\{H_j\}$. The multiplicity can also be bigger than one, even infinite (when there are infinitely many components).

We begin with some definitions and notations. 
\begin{definition}\label{defi1}
	We denote by $\mathfrak m_d$ or $\mathfrak m$, the Lebesgue measure on $\brr^d$. 
	
	Let  $\Omega$ be an open set in $\mathbb{R}^{d}$. We denote by $\mathcal{C}(\Omega)$= $\{\Gamma$ : $\Gamma$ is a connected component of $\Omega\}$, and by $p=p(\Omega)$  the cardinality of $\mathcal{C}(\Omega)$, possibly infinite $p=\infty$.

For a function $\varphi$ on $\Omega$, and a component $\Gamma$ in $\mathcal{C}(\Omega),$ we denote by $\varphi_{\Gamma}$, the restriction of $\varphi$ to $\Gamma$: 
$\varphi_{\Gamma}(x)$= $ \left\{ \begin{matrix}
	\varphi(x), & x\in \Gamma \\
	0, & \mbox{ otherwise.}
\end{matrix}\right. $

Define $F\varphi (\lambda)=(\hat{\varphi}_{\Gamma}(\lambda))_{\Gamma\in \mathcal{C}(\Omega)}$  , where $\hat f$ is the Fourier transform of the function $f$ on $\brr^d$,
$$\hat f(\lambda)=\int_{\brr^d} f(x)e^{-2\pi i \lambda\cdot x}\,dx,\quad(\lambda\in\brr^d).$$

For a cardinality $m\in\{0,1,2,\dots,\infty\}$, we denote by $l^2(m)$ the Hilbert space $\{0\}$ if $m=0$, $\bc^m$ if $0<m<\infty$, and $l^2(\bn)$ if $m=\infty$.

A {\it multiplicity function} on $\brr^d$ is a Borel measurable function $m:\brr^d\to \bn\cup\{0,\infty\}$. 

For two multiplicity functions $m$ and $n$ on $\brr^d$, a {\it matrix field} is a measurable function $C$ on $\brr^d$ such that $C(\lambda)$ is an $m(\lambda)\times n(\lambda)$ matrix with entries in $\bc$ for all $\lambda\in\brr^d$. 

For a positive Radon measure $\mu$ on $\mathbb{R}^{d}$ and a multiplicity function $m$ on $\mathbb{R}^{d}$, define $L^{2}(\mu, m)$ to be the direct integral of the Hilbert spaces $l^2(m(\lambda))$, $\lambda\in\brr^d$ with respect to the measure $\mu$. The elements of $L^2(\mu,m)$ are functions $f$ defined on $\brr^d$ with $f(\lambda)\in l^2(m(\lambda))$, for all $\lambda\in\brr^d$, such that each component $\lambda\to f(\lambda)_i$ (in the canonical basis) is measurable, and such that $\int\|f(\lambda)\|^2\,d\mu(\lambda)<\infty$; 
the inner product is defined by $$\langle f,g\rangle_{L^{2}(\mu, m)}=\int_{\mathbb{R}^{d}}\langle f(\lambda),g(\lambda)\rangle_{l^{2}( m(\lambda))}d\mu(\lambda)
=\int_{\brr^d}\sum_{i=1}^{m(\lambda)}f(\lambda)_i\cj{g(\lambda)_i}\,d\mu(\lambda).$$

\end{definition}

\begin{definition}
	\label{defmu}
	Let $\mu$ be a Radon measure and $m$ a multiplicity function on $\brr^d$. Let $f$ be a complex valued Borel measurable function on $\brr^d$. We define the multiplication operator 
	$M_f$ on $L^2(\mu,m)$ by $M_f \varphi(\lambda)=f(\lambda)\varphi(\lambda)$ with domain 
	
	$$\mathscr D(M_f)=\left\{ \varphi\in L^2(\mu,m) :  f\varphi \in L^2(\mu,m)
	\right\}.$$
\end{definition}
The next proposition is well known and easy to prove, it follows directly from the definition of the adjoint of an unbounded operator.
\begin{proposition}\label{prmu}
	The adjoint of the operator $M_f$ is the operator $M_{\cj f}$ with domain $\mathscr{D}(M_{\cj f})=\mathscr{D}(M_f)$. In particular, if $f$ is real valued, then the operator $M_f$ is self-adjoint. 
\end{proposition}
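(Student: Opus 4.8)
The plan is to compute the adjoint directly from its definition, exploiting the fact that multiplication by a scalar-valued function $f$ acts fiberwise on the direct integral $L^2(\mu,m)=\int^\oplus l^2(m(\lambda))\,d\mu(\lambda)$. Before identifying the adjoint, I would record two preliminary observations. First, $M_f$ is densely defined, so that its adjoint is well defined: for $g\in L^2(\mu,m)$ the truncations $g\chi_{E_n}$, where $E_n=\{\lambda:\abs{f(\lambda)}\le n\}$, lie in $\mathscr D(M_f)$ (since $\abs{fg\chi_{E_n}}\le n\abs{g}$) and converge to $g$ by dominated convergence. Second, the claimed equality of domains $\mathscr D(M_{\cj f})=\mathscr D(M_f)$ is immediate, because $\abs{\cj f(\lambda)}=\abs{f(\lambda)}$ forces $\cj f\varphi$ and $f\varphi$ to have the same pointwise $l^2(m(\lambda))$-norm, hence the same $L^2(\mu,m)$-norm, so one is square-integrable exactly when the other is.

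For the inclusion $M_{\cj f}\subseteq (M_f)^*$, I would take $\psi\in\mathscr D(M_{\cj f})=\mathscr D(M_f)$ and, for an arbitrary $\varphi\in\mathscr D(M_f)$, move the scalar $f(\lambda)$ across the fiber inner product:
$$\langle M_f\varphi,\psi\rangle_{L^2(\mu,m)}=\int f(\lambda)\langle\varphi(\lambda),\psi(\lambda)\rangle_{l^2(m(\lambda))}\,d\mu(\lambda)=\int\langle\varphi(\lambda),\cj{f(\lambda)}\psi(\lambda)\rangle_{l^2(m(\lambda))}\,d\mu(\lambda)=\langle\varphi,M_{\cj f}\psi\rangle_{L^2(\mu,m)}.$$
Since this holds for every $\varphi$ in the domain, $\psi\in\mathscr D((M_f)^*)$ and $(M_f)^*\psi=M_{\cj f}\psi$.

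The reverse inclusion $(M_f)^*\subseteq M_{\cj f}$ is the main obstacle, because a priori an element $\psi\in\mathscr D((M_f)^*)$ only comes with a vector $\eta=(M_f)^*\psi\in L^2(\mu,m)$ satisfying $\langle M_f\varphi,\psi\rangle=\langle\varphi,\eta\rangle$ for all $\varphi\in\mathscr D(M_f)$, and I cannot yet assert that $\cj f\psi$ is even square-integrable. I would overcome this by truncation: testing the identity against $\varphi=g\chi_{E_n}$ for arbitrary $g\in L^2(\mu,m)$ (which lies in $\mathscr D(M_f)$) and noting that on $E_n$ the product $\cj f\psi$ is genuinely in $L^2$, I obtain $\int_{E_n}\langle g,\cj f\psi-\eta\rangle\,d\mu=0$ for every such $g$. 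Choosing $g=(\cj f\psi-\eta)\chi_{E_n}$ then forces $\cj f\psi=\eta$ a.e. on $E_n$. Since $f$ is everywhere finite, $\bigcup_n E_n=\br^d$, so $\cj f\psi=\eta$ a.e.; as $\eta\in L^2(\mu,m)$ this shows $\psi\in\mathscr D(M_{\cj f})$ and $M_{\cj f}\psi=\eta=(M_f)^*\psi$. Combining the two inclusions gives $(M_f)^*=M_{\cj f}$, and together with the domain identity already established we conclude $\mathscr D((M_f)^*)=\mathscr D(M_f)$.

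Finally, the self-adjointness assertion is an immediate specialization: if $f$ is real-valued then $\cj f=f$, hence $M_{\cj f}=M_f$, and the identity $(M_f)^*=M_{\cj f}$ becomes $(M_f)^*=M_f$, so $M_f$ is self-adjoint.
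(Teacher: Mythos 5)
Your proof is correct and complete. There is nothing in the paper to compare it against: the authors state the proposition without proof, remarking only that it ``is well known and easy to prove, it follows directly from the definition of the adjoint of an unbounded operator.'' Your write-up supplies precisely the details that remark waves at, and it correctly locates the only nontrivial point, namely the reverse inclusion $(M_f)^*\subseteq M_{\cj f}$: the easy inclusion $M_{\cj f}\subseteq (M_f)^*$ is the fiberwise inner-product computation, while the converse genuinely requires an argument, and your truncation scheme --- testing against $\varphi=g\chi_{E_n}$ with $E_n=\{\lambda:\abs{f(\lambda)}\leq n\}$, choosing $g=(\cj f\psi-\eta)\chi_{E_n}$, and using that $f$ is everywhere finite so $\bigcup_n E_n=\br^d$ --- is the standard and correct way to do it. Two details worth noting that you handled properly: you verified beforehand that $M_f$ is densely defined (without which $(M_f)^*$ is not even well defined), and since $f$ is scalar-valued the varying multiplicity $m(\lambda)$ in the direct integral $\int^\oplus l^2(m(\lambda))\,d\mu(\lambda)$ causes no difficulty, as every step is fiberwise and $\abs{\cj f}=\abs{f}$ gives the domain equality for free.
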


With these notations and definitions we can state our necessary and sufficient condition for the existence of commuting self-adjoint extensions of the partial differential operators $\{D_j\}$ on $\Omega$.

	\begin{theorem}\label{thi2}
	Let $\Omega$ be an open set in $\mathbb{R}^{d}.$ 
	\begin{enumerate}
		\item Suppose that the operators $D_{j}=\frac{1}{2\pi i}\frac{\partial}{\partial x_{j}}$ on $C_{0}^{\infty}(\Omega)$, $j=1,2,\dots,d$, have commuting self-adjoint extensions $H_{j}.$ Then there exists a positive Radon measure $\mu$ on $\mathbb{R}^{d}$, a multiplicity function $m$ on $\brr^d$  and a matrix field $C$ on $\mathbb{R}^{d}$ with $C(\lambda)$ of dimension $m(\lambda)\times p(\Omega)$ such that the map $\mathcal{F}_{C}: C_{0}^{\infty}(\Omega)\to L^{2}(\mu, m)$ defined by
		\begin{equation}\label{eqi2.0}
			\mathcal{F}_{C}\varphi (\lambda)=C(\lambda)F\varphi(\lambda)=\left(\left(\varphi,\sum_{\Gamma\in \mathcal C(\Omega)}\bar C_{k,\Gamma}(\lambda)\chi_\Gamma e_\lambda\right)\right)_{k=1,\dots,m(\lambda)}, (\lambda\in\mathbb{R}^{d}),
		\end{equation}   extends to an isometric isomorphism from $L^{2}(\Omega)$ to $L^{2}(\mu, m)$.
		
		The transform $\mathcal F_C$, changes the operators $H_j$ into multiplication operators: 
		\begin{equation}
			\label{eqi2.1}
			H_{j}=\mathcal{F}_{C}^{-1}M_{\lambda_{j}}\mathcal {F}_{C}, \quad(j=1,2,\dots,d).
		\end{equation}

	Moreover for $\mu$-a.e. $\lambda$, the subspace $$\mathcal R(\lambda)=\{\mathcal F_C\varphi(\lambda) : \varphi\in C_0^\infty(\Omega)\}$$ is dense in $l^2(m(\lambda))$. In particular $m(\lambda)\leq p(\Omega)$ for $\mu$-a.e. $\lambda$, and if $m(\lambda)<\infty$ then the matrix $C(\lambda)$ has full rank.

		\item Conversely, let $\mu$ be a positive Radon measure on $\brr^d$, let $m$ be a multiplicity function on $\brr^d$ and let $C$ be matrix field on $\brr^d$, with $C(\lambda)$ of dimensions $m(\lambda)\times p(\Omega)$. If the map $\mathcal{F}_{C}: C_{0}^{\infty}(\Omega)\to L^{2}(\mu, m)$ defined by $$\mathcal{F}_{C}\varphi (\lambda)=C(\lambda)F\varphi(\lambda),\,\quad (\lambda\in\mathbb{R}^{d}),$$  extends to an isometric isomorphism from $L^{2}(\Omega)$ to $L^{2}(\mu, m)$, then  
		
		$$H_{j}=\mathcal{F}_{C}^{-1}M_{\lambda_{j}}\mathcal {F}_{C}, \quad(j=1,2,\dots,d)$$
		define commuting self-adjoint extensions of the differential operators $D_j$. 
	\end{enumerate}
	
\end{theorem}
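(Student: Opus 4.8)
The plan is to prove the two directions separately: direction (1) is the substantial one, resting on the direct-integral (multiplicity) form of the spectral theorem together with an identification of the diagonalizing unitary on test functions, while direction (2) is a direct computation once self-adjointness is handled by Proposition \ref{prmu}. For (1), I would first apply the spectral theorem to the commuting self-adjoint family $\{H_j\}$ in its direct-integral form: there exist a scalar spectral Radon measure $\mu$ on $\br^d$, a multiplicity function $m$, and a unitary $\mathcal{F}\colon L^2(\Omega)\to L^2(\mu,m)$ simultaneously diagonalizing the joint spectral measure, so that each $H_j$ becomes multiplication by the coordinate function $\lambda_j$; this yields \eqref{eqi2.1} with $\mathcal{F}$ in place of $\mathcal{F}_C$. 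The remaining content is to pin down the concrete form of $\mathcal{F}$ on $C_0^\infty(\Omega)$. To that end I would disintegrate $\mathcal{F}$ pointwise: for $\mu$-a.e. $\lambda$ and each $k\le m(\lambda)$, realize the functional $\varphi\mapsto(\mathcal{F}\varphi)_k(\lambda)$ as pairing against a distribution $e_k(\lambda)$ on $\Omega$, so that $(\mathcal{F}\varphi)_k(\lambda)=(\varphi,e_k(\lambda))$, with everything depending measurably on $\lambda$. Establishing this measurable disintegration---that the global unitary genuinely is an integral of pointwise distributional functionals---is the technical heart of the argument and the step I expect to be the main obstacle.

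Granting the disintegration, the intertwining does the rest. For $\varphi\in C_0^\infty(\Omega)$ one has $H_j\varphi=D_j\varphi$, so $\mathcal{F}(D_j\varphi)=M_{\lambda_j}\mathcal{F}\varphi$, which reads $(D_j\varphi,e_k(\lambda))=\lambda_j(\varphi,e_k(\lambda))$. Transferring the derivative onto the distribution shows each $e_k(\lambda)$ is a generalized eigenfunction, $\partial_{x_j}e_k(\lambda)=2\pi i\lambda_j e_k(\lambda)$ distributionally for all $j$. By the elementary regularity of this constant-coefficient first-order system, any distributional solution is smooth and, on a connected set, a scalar multiple of $e_\lambda$; hence on each component $\Gamma$ we obtain $e_k(\lambda)|_\Gamma=\bar C_{k,\Gamma}(\lambda)\,e_\lambda|_\Gamma$ for constants $\bar C_{k,\Gamma}(\lambda)$. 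Assembling these gives $e_k(\lambda)=\sum_{\Gamma}\bar C_{k,\Gamma}(\lambda)\chi_\Gamma e_\lambda$ and thus formula \eqref{eqi2.0}, where $C(\lambda)=(C_{k,\Gamma}(\lambda))$ is the desired $m(\lambda)\times p(\Omega)$ matrix field, whose measurability I would read off from that of $e_k(\lambda)$. For the density/rank statement: since $\mathcal{F}$ is onto and $C_0^\infty(\Omega)$ is dense in $L^2(\Omega)$, the set $\{\mathcal{F}\varphi:\varphi\in C_0^\infty(\Omega)\}$ is dense in $L^2(\mu,m)$, and a routine direct-integral argument yields, for $\mu$-a.e. $\lambda$, that $\mathcal{R}(\lambda)$ is dense in $l^2(m(\lambda))$. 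As $\mathcal{R}(\lambda)$ lies in the column space of $C(\lambda)$, whose dimension is at most $p(\Omega)$, density forces $m(\lambda)\le p(\Omega)$ and, when $m(\lambda)<\infty$, full rank of $C(\lambda)$.

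For the converse (2), the multipliers $M_{\lambda_j}$ are real-valued, hence commuting and self-adjoint on $L^2(\mu,m)$ by Proposition \ref{prmu}, so their conjugates $H_j=\mathcal{F}_C^{-1}M_{\lambda_j}\mathcal{F}_C$ under the unitary $\mathcal{F}_C$ are again commuting and self-adjoint. It remains to check $H_j\supseteq D_j$. For $\varphi\in C_0^\infty(\Omega)$, each restriction $\varphi_\Gamma$ is smooth with compact support inside the open component $\Gamma$, so integration by parts gives $\widehat{(D_j\varphi)_\Gamma}(\lambda)=\lambda_j\hat\varphi_\Gamma(\lambda)$, i.e. $F(D_j\varphi)=\lambda_j F\varphi$. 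Consequently $\mathcal{F}_C(D_j\varphi)(\lambda)=C(\lambda)\lambda_j F\varphi(\lambda)=\lambda_j\mathcal{F}_C\varphi(\lambda)=M_{\lambda_j}\mathcal{F}_C\varphi(\lambda)$, which shows $\mathcal{F}_C\varphi\in\mathscr D(M_{\lambda_j})$ and $D_j\varphi=\mathcal{F}_C^{-1}M_{\lambda_j}\mathcal{F}_C\varphi=H_j\varphi$. Thus each $H_j$ extends $D_j$, completing the argument.
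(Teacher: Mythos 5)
Your proposal is correct and follows essentially the same route as the paper: the direct-integral form of the spectral theorem, pointwise realization of the diagonalizing transform as pairing against distributions $e_k(\lambda)$, the first-order system forcing $e_k(\lambda)=\sum_{\Gamma}\bar C_{k,\Gamma}(\lambda)\chi_\Gamma e_\lambda$ on components, and the same density/rank and converse computations. The one step you flag as the main obstacle---the measurable distributional disintegration of $\mathcal F$---is exactly what the paper obtains from Maurin's nuclear spectral theorem, applicable because $C_0^\infty(\Omega)$ is an inductive limit of the spaces $C_0^\infty(\Omega_j)$ whose inclusions (with Sobolev inner products, $m$ large) into $L^2(\Omega)$ are Hilbert--Schmidt.
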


\begin{proof}

 If the differential operators $D_{j}$ have commuting self-adjoint extensions $H_{j}$, using the Complete Spectral Theorem (see e.g., \cite[page 205, page 333]{Mau67},\cite[Chapter 6]{Nel69},\cite{vN49},\cite[Theorem A.8]{CDJ25}), we can diagonalize the partial differential operators $H_{j}$, more precisely, transform them into multiplication operators. So, there exists a Radon measure $\mu$ on $\mathbb{R}^{d}$ and a measurable field of Hilbert spaces $\hat{\mathfrak{H}}(\lambda)$, and an isometric isomorphism $\mathcal{F}:L^{2}(\Omega)\rightarrow\hat{\mathfrak{H}}=\int^\oplus\hat{\mathfrak{H}}(\lambda)d\mu(\lambda),$ such that 
	\begin{equation}
		\mathcal{F}(H_{j}f)(\lambda)=\lambda_{j}(\mathcal{F}f)(\lambda), \label{eq1}
	\end{equation}
	for $\lambda=(\lambda_{1},\lambda_{2},\dots ,\lambda_{d})\in \supp\mu$, and $f$ in the domain $\mathscr{D}(H_{j})$ of $H_{j}.$ 

Let 
$$m(\lambda):=\dim \hat{\mathfrak H}(\lambda),\quad( \lambda\in\brr^d).$$

	Also, there exists a sequence of measurable vector fields $\{y_{k}\}$ such that $\{y_{k}(\lambda):k=1,2,\dots,m(\lambda)\}$ is an orthonormal basis for $\hat{\mathfrak{H}}(\lambda),$ $y_{k}(\lambda)=0$ for $k>m(\lambda).$

	Take a sequence $\{\Omega_{j}\}$ of bounded open subsets of $\Omega$ with $\bar{\Omega}_{j}\subset\Omega_{j+1},j=1,2,\dots,$ and $\cup_{j}\Omega_{j}=\Omega$. The inclusion of the pre-Hilbert spaces $C_{0}^{\infty}(\Omega_{j})$ with the inner product from the Sobolev space $H^{m}(\Omega_{j}),$ with $m$ large enough, into $L^{2}(\Omega),$ is a Hilbert-Schmidt operator (see \cite[Theorem 2, page 337]{Mau67}, \cite[Theorem A.14]{CDJ25}). The space $\Phi=C^{\infty}_{0}(\Omega)$ is an inductive limit of the spaces $C^{\infty}_{0}(\Omega_{j}),$ and therefore we can use the the stronger version of the Spectral Theorem (see \cite[The Fundamental Theorem, page 334]{Mau67}, \cite[Theorem A.15]{CDJ25}). It tells us that, for $\mu-$almost every $\lambda,$ there is a distribution $e_{k}(\lambda)$ on $\Omega$ such that 
	
	\begin{equation}
		(\mathcal{F}\varphi)_{k}(\lambda):=\langle\mathcal{F}\varphi(\lambda),y_{k}(\lambda)\rangle =(\varphi, e_{k}(\lambda)),\text{  ($\varphi\in C_{0}^{\infty}(\Omega),k=1,2,\dots,m(\lambda)$)}.\label{eq2}
	\end{equation}

 Fix $\lambda\in\brr^d$. For $\varphi \in C_{0}^{\infty}(\Omega)$, $H_j\varphi=D_j\varphi=\frac{1}{2\pi i}\frac{\partial\varphi}{\partial x_{j}}\in C_0^\infty(\Omega)$, and therefore,
	$$(\mathcal{F}(H_{j}\varphi))_{k}(\lambda)=(\frac{1}{2\pi i}\frac{\partial\varphi}{\partial x_{i}}, e_{k}(\lambda))=(\varphi,\frac{1}{2\pi i}\frac{\partial e_{k}(\lambda)}{\partial x_{j}}).$$
	Also, from \eqref{eq1}, 
	\begin{equation*}
		(\mathcal{F}(H_{j}\varphi))_{k}(\lambda)=\lambda_{j}(\mathcal{F}\varphi)_{k}(\lambda)=\lambda_{j}(\varphi,e_{k}(\lambda))=(\varphi,\lambda_{j}e_{k}(\lambda)).
	\end{equation*}
			This means that $\frac{1}{2\pi i}\frac{\partial e_{k}(\lambda)}{\partial x_{j}}=\lambda_{j}e_{k}(\lambda),$ or, equivalently, $\frac{\partial}{\partial x_{j}}(e_{-\lambda} e_{k}(\lambda))=0,$ for $j=1,2,\dots ,d.$ So, $e_{- \lambda} e_{k}(\lambda)$ is a constant $\alpha_{k,\Gamma}(\lambda)$ on each component, and therefore $e_{k}(\lambda)=\alpha_{k,\Gamma}(\lambda)e_{ \lambda}$, on $\Gamma$, for $k=1,2,\dots,m(\lambda)$, $\Gamma\in \mathcal C(\Omega)$.
	Hence, the distribution $e_k(\lambda)$ is 
	$$e_{k}(\lambda)=\left(\sum_{\Gamma\in\mathcal C(\Omega)}\alpha_{k,\Gamma}(\lambda)\chi_{\Gamma}\right)e_{\lambda}$$
	Substituting in \eqref{eq2} we get $$\langle\mathcal{F}\varphi(\lambda),y_{k}(\lambda)\rangle =(\varphi, e_{k}(\lambda))=(\varphi,(\sum_{\Gamma}\alpha_{k,\Gamma}(\lambda)\chi_{\Gamma})e_{\lambda})$$$$=\sum_{\Gamma}\bar{\alpha}_{k,\Gamma}(\lambda)(\varphi_{\Gamma},e_{\lambda})=\sum_{\Gamma}\bar{\alpha}_{k,\Gamma}(\lambda)\hat{\varphi}_\Gamma(\lambda).$$
	Let $C_{k,\Gamma}(\lambda):=\bar{\alpha}_{k,\Gamma}(\lambda)$, for $k=1,2,\dots, m(\lambda)$, and $\Gamma\in \mathcal{C}(\Omega).$

	Then $$\langle \mathcal{F}\varphi(\lambda),y_{k}(\lambda)\rangle = \sum_{\Gamma} C_{k,\Gamma}(\lambda)\hat{\varphi}_\Gamma(\lambda)$$$$=\text{$k^{th}$ component of }C(\lambda)F\varphi(\lambda)=(\mathcal{F}_{C} \varphi(\lambda))_k. $$

Define for $\lambda\in\brr^d$, $\Psi(\lambda):\hat{\mathfrak{H}}(\lambda)\to l^2(m(\lambda))$, 
$$\Psi(\lambda)(v)=\left(\ip{v}{y_k(\lambda)}_{\hat{\mathfrak{H}}(\lambda)}\right)_{k=1,\dots,m(\lambda)},\quad(v\in\hat{\mathfrak{H}}(\lambda)),$$
and $\Psi:\hat{\mathfrak{H}}\to L^2(\mu,m)$, 
$$\Psi(f)(\lambda)=\Psi(\lambda)f(\lambda),\quad (f\in \hat{\mathfrak{H}},\lambda\in\brr^d).$$
Then $\Psi$ is an isometric isomorphism, and for $\varphi\in C_0^\infty(\Omega)$,  
$$\Psi(\mathcal F\varphi)(\lambda)=\Psi(\lambda)\mathcal F\varphi(\lambda)=\left(\ip{\mathcal F\varphi(\lambda)}{y_k(\lambda)}\right)_k=\left((\mathcal F_C\varphi(\lambda))_k\right)_k=\mathcal F_C\varphi(\lambda).$$
Thus
$$\Psi\circ\mathcal F=\mathcal F_C,\mbox{ on } C_0^\infty(\Omega).$$
Since $\Psi$ and $\mathcal F$ are isometric isomorphisms, $\mathcal F_C$ is an isometry on $ C_0^\infty(\Omega)$ that can be extended to an isometric isomorphism on $L^2(\Omega)$.

To check \eqref{eqi2.1}, note that, for $f\in L^\infty(\mu)$, if $M_f^{\hat{\mathfrak{H}}}$ is the multiplication operator on $\hat{\mathfrak{H}}$, 
$$M_f^{\hat{\mathfrak{H}}}\varphi(\lambda)=f(\lambda)\varphi(\lambda),\quad (\varphi\in\hat{\mathfrak{H}},\lambda\in\brr^d),$$
and $M_f$ is the multiplication operator on $L^2(\mu,m)$, then 
$$\Psi M_f^{\hat{\mathfrak{H}}}=M_f\Psi.$$
We can therefore compute
$$\mathcal F_CH_j=\Psi\mathcal FH_j=\Psi M_{\lambda_j}^{\hat{\mathfrak{H}}}\mathcal F=M_{\lambda_j}\Psi\mathcal F=M_{\lambda_j}\mathcal F_C,$$
and so $H_j=\mathcal F_C^{-1}M_{\lambda_j}\mathcal F_C$. 

We show now that for $\mu$-a.e. $\lambda$, the subspace $\mathcal R(\lambda)=\{\mathcal F_C\varphi(\lambda) : \varphi\in C_0^\infty(\Omega)\}$ is dense in $l^2(m(\lambda))$.  Assume, by contradiction, that this is not true. Then there exists a measurable set $E$ with $\mu(E)>0$, and for $\lambda\in E$, vectors $v(\lambda)$ such that $\|v(\lambda)\|_{l^2(m(\lambda))}=1$ and $v(\lambda)\perp \mathcal R(\lambda)$. Let $v(\lambda)=0$ for $\lambda\not\in E$. 

Since $\mathcal F_C$ is onto, there exists a function $f_0\in L^2(\Omega)$ such that $v=\mathcal F_Cf_0$. Then, for $\varphi\in C_0^\infty(\Omega)$, 
$$\ip{\mathcal F_C f_0}{\mathcal F_C\varphi}_{L^2(\mu,m)}=\int_{\brr^d}\ip{\mathcal F_Cf_0(\lambda)}{\mathcal F_C\varphi(\lambda)}_{l^2(m(\lambda))}\,d\mu(\lambda)$$$$=\int_{\brr^d}\ip{v(\lambda)}{\mathcal F_C\varphi(\lambda)}\,d\mu(\lambda)=0.$$
But, $\{\mathcal F_C\varphi : \varphi\in C_0^\infty(\Omega)\}$ is dense in $L^2(\mu,m)$, so $v=\mathcal F_Cf_0=0$, a contradiction. 

In the case when $m(\lambda)<\infty$, since $\mathcal R(\lambda)$ is a dense subspace, it follows that $l^2(m(\lambda))=\mathcal R(\lambda)=\{C(\lambda) F\varphi(\lambda) : \varphi\in C_0^\infty(\Omega)\}$ and therefore the matrix $C(\lambda)$ has full rank. This shows also that $m(\lambda)\leq p(\Omega)$ for $\mu$-a.e. $\lambda$.

	For (ii), it is clear the the operators $M_{\lambda_j}$ commute (in the sense of commuting spectral projections), and therefore so do the operators $H_j$. Therefore, we only need to check that the operator $H_j$ is an extension of the differential operator $D_j$. 
	
	For $\varphi\in C_0^\infty(\Omega)$, we have
	$$\mathcal F_CD_j\varphi(\lambda)=C(\lambda)FD_j\varphi(\lambda)=C(\lambda)\left(\lambda_j\hat\varphi_\Gamma(\lambda)\right)_\Gamma$$$$=\lambda_jC(\lambda)\left(\hat\varphi_\Gamma(\lambda)\right)_\Gamma=M_{\lambda_j}\mathcal F_C\varphi(\lambda).$$
	This shows that $D_j\subset \mathcal F_C^{-1}M_{\lambda_j}\mathcal F_C=H_j$.

\end{proof}

\begin{remark}\label{remi3}
	If $\{H_j\}$ are commuting self-adjoint extensions of the differential operators $\{D_j\}$, then the data $\mu$, $m$ and $C$ in Theorem \ref{thi2} is not unique. However, suppose $\mu'$, $m'$ and $C'$ is another data for which the map $\mathcal F_{C'}: L^2(\Omega)\to L^2(\mu',m')$, $\mathcal F_{C'}\varphi(\lambda)=C'(\lambda)F\varphi(\lambda)$ extends to isometric isomorphism that transforms the operator $H_j$ on $L^2(\Omega)$ into the multiplication operator $M_{\lambda_j}$ on $L^2(\mu',m')$, i.e.,  	$H_{j}=\mathcal{F}_{C'}^{-1}M_{\lambda_{j}}\mathcal {F}_{C'}$. 
	In this case, $\mathcal F_{C'}\mathcal F_C^{-1}$ is an isometric isomorphism that intertwines the multiplication operators $M_{\lambda_j}$ on $L^2(\mu,m)$ and $M_{\lambda_j}$ on $L^2(\mu',m')$. This means (see \cite[Chapter 6]{Nel69}) that $\mu$ and $\mu'$ are mutually absolutely continuous and $m=m'$, $\mu$-a.e.

	Conversely, suppose $\mu$, $m$ and $C$ are as in Theorem \ref{thi2}, $\mu$ and $\mu'$ are mutually absolutely continuous, and $d\mu=\rho\,d\mu'$ for some measurable function $\rho$ $\mu'$-a.e., then the operator $W:L^2(\mu,m)\to L^2(\mu',m)$, $Wf=\sqrt{\rho}f$ is an isometric isomorphism. Define also $C'(\lambda)=\sqrt{\rho(\lambda)}C(\lambda)$ for $\lambda\in\brr^d$. Then $W\mathcal F_C=\mathcal F_{C'}$, and it is easy to check that $\mu'$, $m$ and $C'$ also satisfy the conclusions of Theorem \ref{thi2}.

		Suppose now we have two data $(\mu, m, C)$ and $(\mu,m,C')$ satisfying the conclusions of Theorem \ref{thi2}, with the same measure $\mu$ and multiplicity function $m$, but different matrix fields $C$ and $C'$. Then $\mathcal{F}_{C'}\mathcal{F}_{C}^{-1}$ is an isometric isomorphism from $L^{2}(\mu,m)$ to itself which commutes with all the multiplication operators $M_{\lambda_{j}}$, $j=1,\dots,d.$ This implies that there exists $W(\lambda)$ unitary in $l^2(m(\lambda))$ for $\mu$-a.e. $\lambda\in \mathbb{R}^{d}$ such that $$\mathcal{F}_{C'}\mathcal{F}^{-1}_{C}f(\lambda)=W(\lambda)f(\lambda),\quad( f\in L^{2}(\mu,m)).$$
	That means that, for $\varphi\in C_0^\infty(\Omega)$,  $\mathcal{F}_{C'}\varphi(\lambda)=W(\lambda)\mathcal{F}_{C}\varphi(\lambda)$, so 
	\begin{equation}
		\label{eqcwc}
		C'(\lambda)F\varphi(\lambda)=W(\lambda)C(\lambda)F\varphi(\lambda).
	\end{equation}
	We need the following lemma:

	\begin{lemma}\label{Fonto}
		\label{Fouto}
		For a fixed $\lambda\in \mathbb{R}^{d}$, the map $$C_{0}^{\infty}(\Omega)\ni \varphi \to F\varphi (\lambda)=(\hat{\varphi}_{\Gamma}(\lambda))_{\Gamma\in \mathcal{C}(\Omega)}$$ 
		is onto if the number of connected components of $\Omega$ $p$ is finite, and it has dense range containing all finitely supported sequences if the number of components $p$ is infinite.
	\end{lemma}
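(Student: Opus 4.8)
The plan is to reduce the statement to a single connected component, where surjectivity of the scalar Fourier evaluation is elementary, and then reassemble. First I would observe that the components $\Gamma\in\mathcal C(\Omega)$ are pairwise disjoint open sets, so any finite collection of functions $\psi_\Gamma\in C_0^\infty(\Gamma)$ can be glued, by extension by zero, into a single $\varphi=\sum_{\Gamma\in S}\psi_\Gamma\in C_0^\infty(\Omega)$ for a finite index set $S$; each $\psi_\Gamma$ has compact support inside its own component, so the sum is a legitimate test function and its restriction to a component $\Gamma'$ is $\psi_{\Gamma'}$ if $\Gamma'\in S$ and $0$ otherwise. Consequently the $\Gamma'$-entry of $F\varphi(\lambda)$ equals $\hat\psi_{\Gamma'}(\lambda)$ for $\Gamma'\in S$ and vanishes elsewhere. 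Thus it suffices to prove that for each nonempty connected open set $\Gamma$ and each fixed $\lambda$, the scalar map $C_0^\infty(\Gamma)\ni\psi\mapsto\hat\psi(\lambda)=(\psi,\chi_\Gamma e_\lambda)\in\mathbb C$ is onto.

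The key step is a non-vanishing assertion. I would show that there exists $\psi_0\in C_0^\infty(\Gamma)$ with $\hat\psi_0(\lambda)\neq 0$. Indeed, the function $e_{-\lambda}$ is continuous and nowhere zero on $\Gamma$, so if $\int_\Gamma\psi\,e_{-\lambda}\,dx$ were zero for every $\psi\in C_0^\infty(\Gamma)$, the fundamental lemma of the calculus of variations would force $e_{-\lambda}\equiv 0$ on $\Gamma$, which is absurd. Concretely, a bump $\psi_0\geq 0$ supported in a small ball about a point $x_0\in\Gamma$ with $\int\psi_0\neq 0$ has $\hat\psi_0(\lambda)$ as close as we like to $(\int\psi_0)\,e^{-2\pi i\lambda\cdot x_0}\neq 0$. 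Given such $\psi_0$ with $\hat\psi_0(\lambda)=a\neq 0$, linearity yields $\hat\psi(\lambda)=c$ for $\psi=(c/a)\psi_0$ and any prescribed $c\in\mathbb C$, which establishes surjectivity on a single component.

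Assembling the two steps, for any prescribed values $c_\Gamma$ on a finite set $S$ of components I choose $\psi_\Gamma\in C_0^\infty(\Gamma)$ with $\hat\psi_\Gamma(\lambda)=c_\Gamma$ and form $\varphi=\sum_{\Gamma\in S}\psi_\Gamma$, so that $F\varphi(\lambda)$ realizes the target values on $S$ and vanishes elsewhere. If $p$ is finite, taking $S=\mathcal C(\Omega)$ shows that $F(\cdot)(\lambda)$ maps onto $\mathbb C^{p}$. If $p$ is infinite, every $F\varphi(\lambda)$ is automatically finitely supported, since $\varphi$ has compact support meeting only finitely many components, and the construction realizes each finitely supported sequence exactly; as these are dense in $l^2$, the range is dense and contains all finitely supported sequences.

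I do not anticipate a serious obstacle in this argument; the only point requiring genuine care is the non-vanishing $\hat\psi_0(\lambda)\neq 0$, and even that is immediate once one notes that $e_{-\lambda}$ is continuous and nowhere zero on the open set $\Gamma$, so it cannot be annihilated by all test functions supported there.
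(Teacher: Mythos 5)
Your proof is correct; the only difference from the paper lies in how you produce, on a single component $\Gamma$, a test function with prescribed value of $\hat\psi(\lambda)$. You argue abstractly: the functional $\psi\mapsto\hat\psi(\lambda)=\int_\Gamma\psi\,\overline{e_\lambda}\,dx$ is not identically zero (either by the fundamental lemma of the calculus of variations applied to the nowhere-vanishing continuous weight $e_{-\lambda}$, or by your small-ball estimate $\hat\psi_0(\lambda)\approx(\int\psi_0)\,e^{-2\pi i\lambda\cdot x_0}$), and then you rescale by $c/a$ to hit any target. The paper instead uses a modulation trick that makes the value exact in one stroke: it picks a bump $\varphi_\Gamma^{(0)}\in C_0^\infty(\Gamma)$ with $\int\varphi_\Gamma^{(0)}=a_\Gamma$ and sets $\varphi=e_\lambda\sum_\Gamma\varphi_\Gamma^{(0)}$, so that $\hat\varphi_\Gamma(\lambda)=\int\varphi_\Gamma^{(0)}=a_\Gamma$ with no non-vanishing lemma, no continuity estimate, and no rescaling --- the point being that $e_\lambda$ is smooth, so multiplying by it stays inside $C_0^\infty(\Omega)$ and exactly cancels the modulation in the Fourier integral at the frequency $\lambda$. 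The assembly over finitely many components and the observation that any $\varphi\in C_0^\infty(\Omega)$ has compact support meeting only finitely many components (so $F\varphi(\lambda)$ is automatically finitely supported, and finitely supported sequences are dense in $l^2$) are the same in both arguments. What your route buys is robustness: it would work verbatim with $e_{-\lambda}$ replaced by any continuous nowhere-vanishing weight, even one whose reciprocal is not smooth; what the paper's route buys is brevity and exactness, since the explicit choice $e_\lambda\varphi_\Gamma^{(0)}$ serves simultaneously as the witness of non-vanishing and as the solution with the prescribed value.
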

	\begin{proof}
		Let $(a_{\Gamma})_{\Gamma\in\mathcal{C}(\Omega)}$ be a vector with finitely many non-zero components. If $a_{\Gamma}\neq  0$, let $\varphi^{(0)}_{\Gamma}$ be a function in $C_0^\infty(\Omega)$ with compact support contained in $\Gamma$, such that $\int_{\mathbb{R}^{d}}\varphi^{(0)}_{\Gamma}(x)\,dx=a_{\Gamma}$. 
		Let $$\varphi(x)= e^{2\pi i\lambda\cdot x}\sum_{a_{\Gamma}\neq 0}\varphi_{\Gamma}^{(0)}(x).$$ 
		Then,
		$$\hat\varphi_{\Gamma}(\lambda)=\int_{\mathbb{R}^{d}}\varphi_{\Gamma}^{(0)}(x)\,dx=a_{\Gamma},\quad(\Gamma\in\mathcal C(\Omega))$$
		and so $F\varphi(\lambda)=(a_\Gamma)_\Gamma.$
	\end{proof}
	Now using \eqref{eqcwc} and Lemma \ref{Fonto}, we get that $C'(\lambda)=W(\lambda)C(\lambda)$.

	Conversely if $C'(\lambda)=W(\lambda)C(\lambda)$ with $W(\lambda)$ unitary in $l^2(m(\lambda))$ for all $\lambda$, then the operator $W:L^{2}(\mu, m)\to L^{2}(\mu,m)$ defined by, $$Wf(\lambda)=W(\lambda)f(\lambda)$$ is an isometric isomorphism, $W\mathcal F_C= \mathcal F_{C'}$ and $W$ commutes with all multiplication operators $M_{\lambda_j}$. Therefore $(\mu,m,C')$ satisfy the conclusions of Theorem \ref{thi2}.
	
\end{remark}

In the next part of this section, we will focus on the unitary group $\{U(t)\}$ associated as in \eqref{equ} to some commuting self-adjoint extensions $\{H_j\}$ of the differential operators $\{D_j\}$. We know from Theorem \ref{thai}, that $\{U(t)\}$ has the integrability property, it acts as translations inside each connected component, at small scales. 

First, we show that the map $\mathcal F_C$ from Theorem \ref{thi2}, transforms this unitary group into multiplications by exponential functions. This is the analogue of the statement: the Fourier transform changes translation into modulation.

\begin{proposition}
	\label{pri4}
	Let $\{H_j\}$ be commuting self-adjoint extensions of the differential operators $\{D_j\}$ and let $\mu$, $m$ and $C$ be as in Theorem \ref{thi2}. Also, let $\{U(t)\}$ be the unitary group representation of $\brr^d$ given in Theorem \ref{thai}. Then $\{U(t)\}$ has the following formula: define the multiplication operator on $L^2(\mu,m)$
	\begin{equation}\label{eqpri3.1}
		\hat U(t)f(\lambda)=e^{2\pi i\lambda\cdot t} f(\lambda),\quad (f\in L^2(\mu,m), \lambda\in\brr^d, t\in \brr^d).
	\end{equation}
	Then 
	$$\mathcal F_C U(t)=\hat U(t)\mathcal F_C,\quad (t\in \brr^d).$$
\end{proposition}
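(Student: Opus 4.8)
The plan is to deduce the intertwining relation from functional calculus, exploiting the fact that $\mathcal F_C$ already intertwines the generators. By Theorem \ref{thi2}, equation \eqref{eqi2.1}, we have $\mathcal F_C H_j = M_{\lambda_j}\mathcal F_C$ for each $j$; since $\mathcal F_C$ is an isometric isomorphism, the commuting self-adjoint families $\{H_j\}$ on $L^2(\Omega)$ and $\{M_{\lambda_j}\}$ on $L^2(\mu,m)$ are unitarily equivalent via $\mathcal F_C$. The strategy is to recognize $\hat U(t)$ as the one-parameter unitary group generated by $\{M_{\lambda_j}\}$ through the same exponential formula \eqref{equ}, and then transport that identity back through $\mathcal F_C$.

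First I would verify that $t\mapsto\hat U(t)$ is a strongly continuous unitary representation of $\br^d$ on $L^2(\mu,m)$. Unitarity is immediate, since $\lambda\cdot t$ is real so $|e^{2\pi i\lambda\cdot t}|=1$, and the group law $\hat U(s)\hat U(t)=\hat U(s+t)$ holds pointwise; strong continuity follows from dominated convergence, using $|e^{2\pi i\lambda\cdot t}-e^{2\pi i\lambda\cdot t_0}|\le 2$ and pointwise convergence as $t\to t_0$. The key step is to identify its generators: the one-parameter group $s\mapsto M_{e^{2\pi i s\lambda_j}}$ is exactly the Stone exponential $\exp(2\pi i s M_{\lambda_j})$ of the self-adjoint multiplication operator $M_{\lambda_j}$ (Proposition \ref{prmu}), which follows from the spectral theorem for multiplication operators, or directly by differentiating on the dense domain $\mathscr D(M_{\lambda_j})$. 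Consequently $\hat U(t)=\exp\bigl(2\pi i\sum_j t_j M_{\lambda_j}\bigr)$; that is, $\hat U$ is the unitary group associated to the commuting self-adjoint family $\{M_{\lambda_j}\}$ by the Generalized Stone Theorem.

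Next I would conjugate by $\mathcal F_C$. Set $V(t)=\mathcal F_C^{-1}\hat U(t)\mathcal F_C$. Since $\mathcal F_C$ is unitary, $V$ is again a strongly continuous unitary representation of $\br^d$ on $L^2(\Omega)$, and its commuting self-adjoint generators are $\mathcal F_C^{-1}(2\pi i M_{\lambda_j})\mathcal F_C=2\pi i H_j$, by \eqref{eqi2.1}. Thus $V$ and $U$ are both strongly continuous unitary representations of $\br^d$ whose generators are the same commuting self-adjoint family $\{H_j\}$. By the uniqueness in the correspondence of the Generalized Stone Theorem (Theorem \ref{thai}), $V(t)=U(t)$ for all $t$, which is precisely the asserted identity $\mathcal F_C U(t)=\hat U(t)\mathcal F_C$.

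I do not expect a deep obstacle; the argument is essentially bookkeeping with the functional calculus of unbounded operators. The point requiring the most care is the identification of $\hat U(t)$ as the Stone exponential of $\{2\pi i M_{\lambda_j}\}$ — one must handle the possibly unbounded multiplication operators and their domains correctly — together with a clean appeal to the uniqueness half of the Generalized Stone Theorem, so that equality of generators forces equality of the groups. As an alternative that sidesteps the domain bookkeeping, one could phrase everything in terms of joint spectral measures: $\mathcal F_C$ carries the joint spectral measure of $\{H_j\}$ to that of $\{M_{\lambda_j}\}$ (multiplication by indicator functions), and integrating $e^{2\pi i\lambda\cdot t}$ against both measures yields the claim directly.
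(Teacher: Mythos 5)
Your proof is correct and is essentially the paper's own argument: the paper disposes of this in one line by noting that since $\mathcal F_C$ intertwines $\{H_j\}$ with $\{M_{\lambda_j}\}$, it intertwines their exponentials $U(t)$ and $\hat U(t)$. You have simply made the same functional-calculus step explicit, filling in the domain bookkeeping and the appeal to uniqueness in the Generalized Stone Theorem that the paper leaves implicit.
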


\begin{proof}
	
	Since $\mathcal F_C$ transforms the operators $\{H_j\}$ into the multiplication operators $\{M_{\lambda_j}\}$, it will also transform the exponential of these operators, which is $\{U(t)\}$, into the exponential of the multiplication operators, which is $\{\hat U(t)\}$.
\end{proof}

Next we will present a necessary and sufficient condition for a set $\Omega$ to be spectral (see Definition \ref{defp1}), in terms of some property of the unitary group $\{U(t)\}$. The integrability property is associated to the existence of commuting self-adjoint extensions $\{H_j\}$ and then the group $\{U(t)\}$ is defined by $U(t)=\exp(2\pi it\cdot H)$ as in \eqref{equ}. The integrability property means that $\{U(t)\}$ acts as translation just inside the connected components in $\Omega$, at small scales. To have a spectral set $\Omega$, the unitary group must satisfy a stronger condition which we call the {\it local translation} property which requires $\{U(t)\}$ to act as translation whenever possible, so when both $x$ and $x+t$
are inside $\Omega$, even if in possibly different components. We make this precise is the next definition. 

	\begin{definition}\label{deflt}
	Let $\Omega$ be an open set in $\mathbb{R}^{d}$ and $\{U(t)\}_{t\in\brr^d}$ be a strongly continuous group of unitary operators on $L^{2}(\Omega).$
	We call ${U(t)}$ a group of local translations (on $\Omega$) if for every open subset $V$ of $\Omega$ and any $f\in\mathbb{R}^{d}$ such that $V+t\subset\Omega$, 
	$$(U(t)f)(x)=f(x+t)\text{   for a.e. $x\in V$ and all $ f\in L^{2}(\Omega).$ }$$ 
\end{definition}
\begin{definition}

	If $(\Omega,\mu)$ is a spectral pair (see Definition \ref{defp1}), one can define a unitary representation of $\brr^d$ on $L^2(\Omega)$ as follows: first define $\hat U(t)$ on $L^2(\mu)$ by 
	$$\hat U(t)f(\lambda)=e^{2\pi i\lambda\cdot t}f(\lambda) \quad (f\in L^2(\mu), \lambda\in\brr^d, t\in\brr^d).$$
	Then define $U(t)$ on $L^2(\Omega)$ by
	\begin{equation}
		\label{eqdef1.1}
		\mathscr  F U(t) f=\hat U(t) \mathscr F f, \quad (f\in L^2(\Omega)).
	\end{equation}
\end{definition}

	\begin{theorem}\label{thi5}
	Let $\Omega$ be an open set in $\mathbb{R}^{d}$.
	\begin{enumerate}
		\item If $\Omega$ is a spectral set then $\{U(t)\}_{t\in\brr^d}$ is a group of local translations. 
		\item Conversely, if there exists a group of local translations $\{U(t)\}_{t\in\brr^d}$ on $L^2(\Omega)$, then $\Omega$ is spectral. The measure $\mu$ that makes $(\Omega,\mu)$ a spectral pair can be obtained as follows: Theorem \ref{thai} gives us commuting self-adjoint extensions $\{H_j\}$ of the differential operators $\{D_j\}$. Theorem \ref{thi2} gives us a measure $\mu$, multiplicity function $m$ and matrix field $C$. In this case, the multiplicity function $m$ is $1$ $\mu$-a.e., and the matrix $C(\lambda)$ of dimension $1\times p(\Omega)$ can be chosen to have all entries equal to 1. Fixing these conditions on $m$ and $C$ gives us the pair measure $\mu$.
	\end{enumerate}
	
\end{theorem}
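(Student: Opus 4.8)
For part (1), I would start from a spectral pair $(\Omega,\mu)$ with its unitary $\mathscr F\colon L^2(\Omega)\to L^2(\mu)$ and define $U(t)=\mathscr F^{-1}\hat U(t)\mathscr F$; this is automatically a strongly continuous unitary group because $\hat U(t)$ is. To verify the local translation property of Definition \ref{deflt}, fix an open $V\subset\Omega$ and $t$ with $V+t\subset\Omega$, and argue weakly. For $f,g\in L^1(\Omega)\cap L^2(\Omega)$ one has $\mathscr Ff=\hat f$, $\mathscr Fg=\hat g$, so $\ip{U(t)f}{g}=\int e^{2\pi i\lambda\cdot t}\hat f\,\overline{\hat g}\,d\mu$. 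On the other hand, if $g$ is supported in $V$ then $T(-t)g=g(\cdot-t)$ is supported in $V+t\subset\Omega$, lies in $L^1\cap L^2$, and satisfies $\mathscr F(T(-t)g)=e^{-2\pi i\lambda\cdot t}\hat g$; hence $\ip{f}{T(-t)g}=\int e^{2\pi i\lambda\cdot t}\hat f\,\overline{\hat g}\,d\mu$ as well. Thus $\ip{U(t)f}{g}=\ip{f}{T(-t)g}=\int_V f(x+t)\overline{g(x)}\,dx$. Approximating in $L^2$ removes the $L^1$ restriction on $f$, and since such $g$ are dense in $L^2(V)$, the identity says exactly that $U(t)f(x)=f(x+t)$ for a.e.\ $x\in V$.

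For part (2), I first note that the local translation property is stronger than the integrability property of Definition \ref{defp5}, so Theorem \ref{thai} yields commuting self-adjoint extensions $\{H_j\}$ with $U(t)=\exp(2\pi i\,t\cdot H)$ equal to the given group, and Theorem \ref{thi2} supplies $\mu,m,C$ together with the generalized eigenfunctions $e_k(\lambda)=\bigl(\sum_{\Gamma}\alpha_{k,\Gamma}(\lambda)\chi_\Gamma\bigr)e_\lambda$, where $C_{k,\Gamma}=\overline{\alpha_{k,\Gamma}}$; by Proposition \ref{pri4}, $\ip{U(t)\varphi}{e_k(\lambda)}=e^{2\pi i\lambda\cdot t}\ip{\varphi}{e_k(\lambda)}$ for $\mu$-a.e.\ $\lambda$. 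The engine of the argument is the observation that if $\varphi$ is supported in a ball $B$ with $B-t\subset\Omega$, then $U(t)\varphi$ equals the honest translate $T(t)\varphi$: on the open set $A=\Omega\cap(\Omega-t)$ the local translation property gives $U(t)\varphi=\varphi(\cdot+t)$, whose norm already equals $\norm{\varphi}$ because $B\subset A+t$, so unitarity forces $U(t)\varphi$ to vanish off $A$.

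Feeding $U(t)\varphi=T(t)\varphi$ into the intertwining identity is what couples different components. Given any two components $\Gamma_1,\Gamma_2$, I choose equal-radius balls $B_1\subset\Gamma_1$, $B_2\subset\Gamma_2$ and let $t$ be the difference of their centers, so that $B_1-t=B_2$; using $e_\lambda(y-t)=e^{-2\pi i\lambda\cdot t}e_\lambda(y)$ the exponential factors cancel, and for all $\varphi\in C_0^\infty(B_1)$ the identity collapses to $\overline{\alpha_{k,\Gamma_2}(\lambda)}\,\hat\varphi(\lambda)=\overline{\alpha_{k,\Gamma_1}(\lambda)}\,\hat\varphi(\lambda)$, whence $\alpha_{k,\Gamma_1}(\lambda)=\alpha_{k,\Gamma_2}(\lambda)$ for $\mu$-a.e.\ $\lambda$. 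Running over the (countably many) pairs of components and a countable dense family of test functions gives a single full-measure set on which $\alpha_{k,\Gamma}(\lambda)=:\alpha_k(\lambda)$ is independent of $\Gamma$. Then $e_k(\lambda)=\alpha_k(\lambda)e_\lambda$ on all of $\Omega$, so $\mathcal F_C\varphi(\lambda)=\hat\varphi(\lambda)\,(\overline{\alpha_k(\lambda)})_k$ is always a scalar multiple of one fixed vector; since Theorem \ref{thi2} guarantees $\mathcal R(\lambda)$ is dense in $l^2(m(\lambda))$, this forces $m(\lambda)=1$ a.e.

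With $m=1$ the row $C(\lambda)=(\overline{\alpha_1(\lambda)})_\Gamma$ has all entries equal, and I use the normalization freedom of Remark \ref{remi3} (a unimodular $W(\lambda)$ combined with the change of measure $d\mu\rightsquigarrow\abs{\alpha_1}^2\,d\mu$) to rescale $C$ to the constant matrix with all entries $1$, which pins down the pair measure. For this normalized data $\mathcal F_C\varphi(\lambda)=\sum_\Gamma\hat\varphi_\Gamma(\lambda)=\hat\varphi(\lambda)$, so $\mathcal F_C$ is an isometric isomorphism $L^2(\Omega)\to L^2(\mu)$ realizing $\varphi\mapsto\hat\varphi$; a routine $L^1\cap L^2$ approximation identifies $\mathscr F$ with $\mathcal F_C$ on $L^1\cap L^2$, so $(\Omega,\mu)$ is a spectral pair and $\Omega$ is spectral. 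I expect the main obstacle to be the step $U(t)\varphi=T(t)\varphi$ and the attendant a.e.-$\lambda$ bookkeeping: securing one $\mu$-null exceptional set that works simultaneously for every pair of components and every test function, while allowing $p(\Omega)=\infty$.
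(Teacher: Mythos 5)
Your proposal is correct and takes essentially the same route as the paper: part (1) is the same Fourier-side computation combined with duality against functions supported in $V$, and part (2) runs Theorem \ref{thai} and Theorem \ref{thi2}, uses translation between two components to force the columns of $C(\lambda)$ (equivalently the constants $\alpha_{k,\Gamma}(\lambda)$) to coincide, deduces $m(\lambda)=1$ from the density of $\mathcal R(\lambda)$ in $l^2(m(\lambda))$, and renormalizes the measure to $d\mu'=|c|^2\,d\mu$ exactly as in the paper. Your inline unitarity/norm argument showing $U(t)\varphi=T(t)\varphi$ is precisely the content and proof of the paper's Lemma \ref{lem2.8}.
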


	\begin{proof} For (i), let $V$ be open subset of $\Omega$ and $t\in\mathbb{R}^{d}$ such that $V+t\in \Omega.$  
	Let $\varphi\in C_{0}^{\infty}(\Omega)$ be supported on $V+t$. We prove that $$(U(t)\varphi)(x)=(T(t)\varphi)(x)=\varphi(x+t),\text{       for all $x\in V$. }$$
	
	We have, for $\lambda\in \mathbb{R}^{d}:$ 
	$$(\mathscr{F}U(t)\varphi)(\lambda)=e^{2\pi i t\cdot \lambda}\hat{\varphi}(\lambda)
$$$$=e^{2\pi it\cdot\lambda}\int_{V+t}\varphi(x)e^{-2\pi ix\cdot\lambda}dx=\int_{V+t}\varphi(x)e^{-2\pi i(x-t)\cdot\lambda}dx$$
	$$=\int_{V}\varphi(x'+t)e^{-2\pi ix'\cdot\lambda}dx'\text{  (substitution $x'=x-t$)}$$
	$$=\int_{\Omega}(T(t)\varphi)(x) e^{-2\pi ix\cdot\lambda}dx =\mathscr{F}(T(t)\varphi)(\lambda).$$
	Since $\mathscr{F}$ is unitary, we obtain $U(t)\varphi=T(t)\varphi.$

	Now, take $f\in L^{2}(\Omega)$, and let $\varphi\in C_{0}^{\infty}(\Omega)$ with support in $V.$ It follows that, for $x\in V+t$ $$(U(-t)\varphi)(x)=(T(-t)\varphi)(x)=\varphi(x-t)$$ and $T(-t)\varphi$ is supported on $V+t.$ 
	Then,
	$$\int_{\Omega}(U(t)f)(x)\bar{\varphi}(x)dx=\langle U(t)f,\varphi\rangle =\langle f, U(-t)\varphi \rangle=\langle f, T(-t)\varphi \rangle$$
	$$=\int_{V+t}f(x)\overline{T(-t)\varphi}(x)dx=\int_{V+t}f(x)\bar{\varphi}(x-t)dx$$$$=\int_{V}f(x+t)\bar{\varphi}(x)dx=\int_{\Omega}f(x+t)\bar{\varphi}(x)dx.$$
	
	Thus, 
	
	$$\int_{\Omega}(U(t)f)(x)\bar{\varphi}(x)dx=\int_{\Omega}f(x+t)\bar{\varphi}(x)dx$$
	for any $\varphi\in C_{0}^{\infty}(\Omega)$ supported on $V.$

	Therefore, $(U(t)f)(x)=f(x+t),$ for a.e. $x\in V$. This means that {$U(t)$} is a group of local translations.

	Conversely, if $\{U(t)\}$ a group of local translation then $\Omega$ has the integrability property and, with Theorem \ref{thai}, there exists $\mu,m$ and $C$ as in Theorem \ref{thi2}.

	Let $V$ be an open subset of $\Omega$ such that $V$ is contained in the component $\Gamma_{i}$ and $V+t$ is contained in the component $\Gamma_{j}.$
	
	Let $\varphi\in C_{0}^{\infty}(\Omega)$ be supported on $V+t$. Then $(U(t)\varphi)(x)=\varphi(x+t)$ is zero outside  $V.$ This follows from the next easy lemma:
	
	\begin{lemma}\label{lem2.8}
		Suppose $V$ is a subset of $\Omega$, $t\in\brr^d$ with $V+t\subset \Omega$. Suppose $f\in L^2(\Omega)$ is zero outside $V+t$, $U(t)$ is unitary on $L^2(\Omega)$, and $U(t)f(x)=f(x+t)$, for all $x\in V$. Then $U(t)f$ is zero outside $V$. 
	\end{lemma}
	
	\begin{proof}
		We have 
		
		$$\int_{\Omega\setminus V} |U(t)f(x)|^2\,dx=\int_\Omega|U(t)f(x)|^2\,dx -\int_{V}|U(t)f(x)|^2\,dx$$$$=\int_\Omega |f(x)|^2\,dx-\int_{V} |f(x+t)|^2\,dx
		=\int_\Omega |f(x)|^2\,dx-\int_{V+t}|f(x)|^2\,dx$$$$=\int_{\Omega\setminus( V+t)}|f(x)|^2\,dx=0.$$
		Thus, $U(t)f$ is supported on $V$.
	\end{proof}
	
	We have, for $\lambda\in \mathbb{R}^{d}:$
	$$(\mathcal{F}_{C}U(t)\varphi)(\lambda)=C(\lambda)((\widehat{U(t)\varphi})_{\Gamma_{k}}(\lambda))_{k}$$
	$$=C(\lambda)\left(\begin{matrix}
		0 \\ \vdots \\ \widehat{(T(f)\varphi)_{\Gamma_{i}}}(\lambda)\\ \vdots \\ 0
	\end{matrix}\right)=e^{2\pi i t\cdot \lambda} \hat{\varphi}(\lambda)\cdot C_{i}(\lambda)$$
	where $C_{i}(\lambda)$ is the $i$-th column of $C(\lambda)$.\\
	On other hand, 
	$$(\mathcal{F}_{C}\varphi)(\lambda)=C(\lambda)(\widehat{\varphi}_{\Gamma_{k}}(\lambda))_{k}$$
	$$=C(\lambda)\left(\begin{matrix}
		0 \\ \vdots \\ \hat{\varphi}_{\Gamma_{j}}(\lambda)\\ \vdots \\ 0
	\end{matrix}\right) =\hat{\varphi}(\lambda)C_{j}(\lambda)$$
	But, 
	$$(\mathcal{F}_{C}U(t)\varphi)(\lambda)=e^{2\pi it\cdot\lambda}\mathcal{F}_{C}\varphi(\lambda).$$
	From the relations obtained above,
	$$e^{2\pi it\cdot\lambda}\hat{\varphi}(\lambda)C_{i}(\lambda)=e^{2\pi it\cdot\lambda}\hat{\varphi}(\lambda)C_{j}(\lambda)$$
	Since $\varphi\in C_{0}^{\infty}(\Omega)$ is arbitrary, we get $C_{i}(\lambda)=C_{j}(\lambda).$ Thus all the columns of $C(\lambda)$ are the same. However, the matrix $C(\lambda)$ has dense range, and therefore $m(\lambda)=1$, for $\mu$-a.e. $\lambda$. In addition, since $C(\lambda)$ has dimensions $m(\lambda)\times p(\Omega)$, all the entries of $C(\lambda)$ are the same, and we denote the value by $c(\lambda)\neq 0$, and therefore $C(\lambda)=c(\lambda)\cdot \mathbf 1$, where $\mathbf 1$ is the $1\times p(\Omega)$ matrix with all entries equal to 1. We have then, for $\varphi\in C_0^\infty(\Omega)$ and $\lambda\in\brr^d$, 
	$$\mathcal F_C\varphi(\lambda)=c(\lambda)\mathbf 1 F\varphi(\lambda)=c(\lambda)\sum_\Gamma\hat\varphi_\Gamma(\lambda)=c(\lambda)\hat\varphi(\lambda).$$
	Since $\mathcal F_C$ is unitary,
	$$\int_\Omega|\varphi(x)|^2\,dx=\|\mathcal F_C\varphi\|^2_{L^2(\mu,m)}=\int|c(\lambda)|^2 |\hat\varphi(\lambda)|^2\,d\mu.$$
	Taking, $d\mu'(\lambda)=|c(\lambda)|^2\,d\mu(\lambda)$, we see that $(\Omega,\mu')$ is a spectral pair. Note also that $\mu'$, $m$ and $C':=\mathbf 1$ can be used as the data in Theorem \ref{thi2}.
	
\end{proof}

\section{Finite measure}\label{sec3}
In this section we focus on the case when $\Omega$ has finite measure and finitely many components. We prove in Theorem \ref{thf1} that, in this case, the spectral measure associated to the commuting self-adjoint extensions $\{H_j\}$ is atomic, supported on a separated set (Definition \ref{defsep}), and so is the associated measure $\mu$ from Theorem \ref{thi2}. The eigenspaces $E(\lambda)$ have a basis of functions of the form $\sum_{\Gamma\in \mathcal C(\Omega)} c_\Gamma \chi_\Gamma e_\lambda$: different constants $c_\Gamma$ for different components $\Gamma$, but the same exponential function $e_\lambda$. Moreover, the constants $c_\Gamma$ are obtained from linear combinations of the conjugate of the rows of the matrix $C(\lambda)$. Putting together all these functions for all $\lambda$ in the spectrum, one obtains an orthonormal basis for $L^2(\Omega)$.
\begin{definition}
	\label{defsep}
	A subset $\Lambda$ of $\brr^d$ is called {\it separated} if there exists a constant $C>0$ such that $\|\lambda-\lambda'\|\geq C$ for all $\lambda,\lambda'\in\Lambda$, $\lambda\neq\lambda'$.
\end{definition}

\begin{theorem}\label{thf1}
	Assume that $\mathfrak m(\Omega)<\infty$ and $\Omega$ has finitely many components. Suppose that $\{H_{j}\}$ are commuting self-adjoint extensions of the operators $\{D_{j}\}$ and let $\mu,m$ and $C$ be as in Theorem \ref{thi2}.
	
		The joint spectral measure $E$ is atomic with $\Lambda:=\supp E=\supp\mu,$ and the support $\Lambda$ is separated. 
	
	 Let 
	\begin{equation}
		\label{eqf1.1}
		E_{\lambda,k}:=\sum_{\Gamma}\bar{C}_{k,\Gamma}(\lambda)\chi_{\Gamma}e_\lambda,\quad(\lambda\in\supp\mu,k=1,\dots,m(\lambda)).
	\end{equation}

	For $\lambda\in\Lambda$, the subspace $E(\lambda)$ is $$E(\lambda)=\operatorname*{span}\{E_{\lambda,k} : k=1,\dots,m(\lambda)\}$$
	and the functions $\{E_{\lambda,k} : k=1,\dots, m(\lambda)\}$ form a basis for $E(\lambda)$. In particular $E_{\lambda,k}$ is in the domain $\mathscr D(H)$.

	For $\varphi\in L^{2}(\Omega),$
	\begin{equation}
		\label{eq11.1}
		\mathcal{F}_{C}\varphi(\lambda)=\left(\ip{ \varphi}{E_{\lambda,k}}\right)_{k=1,\dots,m(\lambda)},
	\end{equation}
	$$\mathcal{F}_{C}H_{j}\mathcal{F}^{-1}_{C}=M_{\lambda_{j}}.$$
	$M_{\lambda_{j}}$ is the multiplication operator on $L^{2}(\mu,m)$ and since $\mu$ is atomic, it can be regarded as a diagonal matrix with entries $\lambda_{j}I_{m(\lambda)}$ at $\lambda.$

	The spectral projection is 
	$$E(\lambda)=\mathcal{F}_{C}M_{\chi_{\{\lambda\}}}\mathcal{F}_{C}^{-1}, $$
	where $M_{\chi_{\{\lambda\}}}$ is the diagonal matrix with entries $\lambda_j\cdot I_{m(\lambda)}$ at $\lambda$ and $0$ everywhere else, or the operator of multiplication by the characteristic function of the point $\lambda$.  
	
	Also the functions $E_{\lambda,k}$ are in the domains $\mathscr{D}(H_j)$, and are eigenvectors for the operators $H_j$ and for the unitary group $\{U(t)\}$ defined by \eqref{equ}: 
	$$H_j E_{\lambda,k}=\lambda_j E_{\lambda,k},\mbox{ and } U(t)E_{\lambda,k}=e^{2\pi i\lambda\cdot t} E_{\lambda,k}.$$

	We can pick some constants $\alpha_{k,\Gamma}(\lambda)$ such that, for each $\lambda\in \supp\mu$, the set $$\left\{\tilde E_{\lambda,k}:=\sum_{\Gamma}\alpha_{k,\Gamma}\chi_{\Gamma}e^{2\pi i\lambda\cdot t}: k=1,\dots,m(\lambda)\right\}$$ is an orthonormal basis for $E(\lambda)$, and then $$\left\{\tilde E_{\lambda, k}:\lambda\in \supp\mu, k=1,\dots,m(\lambda)\right\}$$ is an orthonormal basis for $L^{2}(\Omega).$

	Conversely, if there exists an orthonormal basis for $L^{2}(\Omega)$ of the form 
	$$\{\tilde E_{k,\lambda}=\sum_{\Gamma}\alpha_{k,\Gamma}(\lambda)\chi_{\Gamma}e^{2\pi i\lambda\cdot x}:\lambda\in\Lambda,k=1,\dots,m(\lambda)\},$$
	for some subset $\Lambda$ in $\brr^d$, then let $\delta_\Lambda$ be the counting measure on $\Lambda$ and  define 
	$$\mathcal{F}:L^{2}(\Omega)\to L^{2}(\delta_{\Lambda},m(\lambda)),\quad 
	\mathcal{F}\varphi(\lambda)=(\langle\varphi,\tilde E_{k,\lambda}\rangle)_{k=1,\dots,m(\lambda)},\quad(\varphi\in L^{2}(\Omega)). $$
	The operator $\mathcal{F}$ is an isometric isomorphism.

	Define $$ H_{j}=\mathcal{F}^{-1}M_{\lambda_{j}}\mathcal{F}.$$
	Then $\{H_{j}\}$ are commuting self-adjoint extensions of the operators $D_{j}.$

\end{theorem}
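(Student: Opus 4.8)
The plan is to treat the finite-measure hypothesis as the one structural gain over Theorem~\ref{thi2}: since $\mathfrak m(\Omega)<\infty$, every $\chi_\Gamma e_\lambda$ lies in $L^2(\Omega)$, so each $E_{\lambda,k}=\sum_\Gamma\bar C_{k,\Gamma}(\lambda)\chi_\Gamma e_\lambda$ is a genuine $L^2(\Omega)$ function rather than a distribution. Granting this, formula \eqref{eq11.1} is immediate: for $\varphi\in C_0^\infty(\Omega)$ it is literally \eqref{eqi2.0} rewritten with the convention $(\varphi,E_{\lambda,k})=\langle\varphi,E_{\lambda,k}\rangle$, and since $\varphi\mapsto\langle\varphi,E_{\lambda,k}\rangle$ is bounded on $L^2(\Omega)$ it extends from the dense subspace $C_0^\infty(\Omega)$ to all of $L^2(\Omega)$. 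Everything else follows once the two analytic cores are in place—that $\mu$ is atomic and that $\Lambda=\supp\mu=\supp E$ is separated—so I would settle those first and then read off the eigenfunction description.

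For atomicity and separation I would localize to a single component. A direct computation from \eqref{eqi2.0} gives $\mathcal F_C(\chi_\Gamma e_{\lambda_0})(\lambda)=\widehat{\chi_\Gamma}(\lambda-\lambda_0)\,C_\Gamma(\lambda)$, where $C_\Gamma(\lambda)$ is the $\Gamma$-th column of $C(\lambda)$. Testing the isometry of $\mathcal F_C$ on $\chi_\Gamma e_{\lambda_0},\chi_\Gamma e_{\lambda_0'}$ yields, with $d\sigma_\Gamma:=\|C_\Gamma(\cdot)\|^2\,d\mu$, the reproducing identity $\int\widehat{\chi_\Gamma}(\lambda-\lambda_0)\overline{\widehat{\chi_\Gamma}(\lambda-\lambda_0')}\,d\sigma_\Gamma(\lambda)=\widehat{\chi_\Gamma}(\lambda_0'-\lambda_0)$; equivalently the Fourier transform $g\mapsto\widehat g$ is an isometry of $L^2(\Gamma)$ into $L^2(\sigma_\Gamma)$. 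Surjectivity of this isometry follows from surjectivity of $\mathcal F_C$ together with the fact that the projections $M_{\chi_\Gamma}$ transfer to orthogonal projections summing to $I$ and carrying each fibre into $\mathbb{C}\,C_\Gamma(\lambda)$, which forces the nonzero columns $\{C_\Gamma(\lambda)\}_\Gamma$ to be an orthogonal basis of $l^2(m(\lambda))$ for $\mu$-a.e.\ $\lambda$. Thus $(\Gamma,\sigma_\Gamma)$ is a spectral pair (Definition~\ref{defp1}); since $\mathfrak m(\Gamma)<\infty$, by \cite[Corollary~1.11]{Ped87} the measure $\sigma_\Gamma$ is a constant multiple of counting measure on a spectrum $\Lambda_\Gamma$, and $\Lambda_\Gamma$ is separated because $\widehat{\chi_\Gamma}$ is continuous and nonzero near $0$. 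As the nonzero columns span each fibre, $\mu$ and $\sum_\Gamma\sigma_\Gamma$ have the same null sets, so $\mu$ is supported on the countable set $\bigcup_\Gamma\Lambda_\Gamma$ and is atomic, with $\supp E=\supp\mu=\bigcup_\Gamma\Lambda_\Gamma$.

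With $\mu=\sum_n w_n\delta_{\lambda_n}$ atomic, the eigenfunction statements drop out by inverting $\mathcal F_C$ on the fibres. For a fixed atom $\lambda_n$ the unit vectors $v_{n,l}=w_n^{-1/2}e_l$ supported at $\lambda_n$ form an orthonormal basis of the fibre $l^2(m(\lambda_n))$, and a one-line computation with \eqref{eq11.1} gives $\langle\mathcal F_C^{-1}v_{n,l},f\rangle=\sqrt{w_n}\,\langle E_{\lambda_n,l},f\rangle$ for every $f$, i.e.\ $\mathcal F_C^{-1}v_{n,l}=\sqrt{w_n}\,E_{\lambda_n,l}$. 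Hence $\{E_{\lambda_n,l}\}_l$ is a basis of the spectral subspace $E(\lambda_n)=\mathcal F_C^{-1}(\text{fibre at }\lambda_n)$; since $M_{\lambda_j}v_{n,l}=\lambda_{n,j}v_{n,l}$, relation \eqref{eqi2.1} gives $H_jE_{\lambda_n,l}=\lambda_{n,j}E_{\lambda_n,l}$, Proposition~\ref{pri4} gives $U(t)E_{\lambda_n,l}=e^{2\pi i\lambda_n\cdot t}E_{\lambda_n,l}$, and the spectral projection onto $E(\lambda_n)$ is $\mathcal F_C^{-1}M_{\chi_{\{\lambda_n\}}}\mathcal F_C$. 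Applying Gram--Schmidt inside each finite-dimensional $E(\lambda_n)$ (of dimension $m(\lambda_n)\le p(\Omega)$) produces orthonormal vectors $\tilde E_{\lambda_n,k}$; these remain of the form $\sum_\Gamma\alpha_{k,\Gamma}\chi_\Gamma e_{\lambda_n}$ because the $E_{\lambda_n,l}$ all share the factor $e_{\lambda_n}$, and collecting them over all atoms gives an orthonormal basis of $L^2(\Omega)$ since $L^2(\Omega)=\bigoplus_n E(\lambda_n)$. For the converse I would feed the given basis into Theorem~\ref{thi2}(ii): put $\mu=\delta_\Lambda$, keep $m$, and set $C(\lambda)=(\bar\alpha_{k,\Gamma}(\lambda))_{k,\Gamma}$ of size $m(\lambda)\times p(\Omega)$; then $\mathcal F\varphi(\lambda)_k=\langle\varphi,\tilde E_{k,\lambda}\rangle=\sum_\Gamma\bar\alpha_{k,\Gamma}(\lambda)\widehat{\varphi}_\Gamma(\lambda)=(C(\lambda)F\varphi(\lambda))_k$, so $\mathcal F=\mathcal F_C$ is an isometric isomorphism onto $L^2(\delta_\Lambda,m)$ exactly because $\{\tilde E_{k,\lambda}\}$ is an orthonormal basis, and Theorem~\ref{thi2}(ii) yields that $H_j=\mathcal F^{-1}M_{\lambda_j}\mathcal F$ are commuting self-adjoint extensions of the $D_j$.

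The step I expect to be the genuine obstacle is the separation of $\Lambda$. The reduction above shows each $\Lambda_\Gamma$ is separated, but a finite union of separated sets need not be separated, so the real work is to rule out eigenvalues coming from different components accumulating against one another. The leverage I would use is the orthogonality of the eigenspaces attached to distinct atoms together with the co-isometry structure of the coefficient matrices $(\alpha_{k,\Gamma}(\lambda))$ forced by the orthonormality relations; converting this into a uniform positive lower bound on the gaps—using the continuity of the finitely many functions $\widehat{\chi_\Gamma}$ near $0$ quantitatively—is the delicate point on which the whole separation claim rests.
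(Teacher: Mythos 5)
Your reduction to single components breaks down at its pivotal step, and with it both the spectral-pair claim and atomicity. You assert that the transferred projections $\mathcal F_C M_{\chi_\Gamma}\mathcal F_C^{-1}$ act fibrewise and therefore force the nonzero columns $\{C_\Gamma(\lambda)\}_\Gamma$ to be an orthogonal basis of $l^2(m(\lambda))$ for $\mu$-a.e.\ $\lambda$. But a projection on $L^2(\mu,m)$ decomposes fibrewise only when it commutes with the diagonal algebra of multiplication operators, i.e.\ only when $M_{\chi_\Gamma}$ commutes with the $H_j$ --- and it does not: the group $\{U(t)\}$ mixes components (that is the entire content of the boundary matrix $B$ in Section \ref{sec4}). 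Concretely, take $\Omega=(0,\tfrac12)\cup(\tfrac12,1)$ with the extension whose unitary group is translation modulo $1$; then $\mu=\delta_{\bz}$, $m\equiv 1$ and $C(\lambda)=(1\ \ 1)$, so the two columns are equal and not orthogonal, and your measure $d\sigma_{\Gamma_1}=\|C_{\Gamma_1}(\cdot)\|^2\,d\mu$ is counting measure on $\bz$, while $\bigl((0,\tfrac12),\delta_{\bz}\bigr)$ is \emph{not} a spectral pair: the exponentials $e_n$ are not orthogonal on $(0,\tfrac12)$. Your isometry $L^2(\Gamma)\to L^2(\sigma_\Gamma)$ is correct, but it need not have dense range, because orthogonality of the subspaces $\mathcal F_C(L^2(\Gamma))$ inside the direct integral does not imply orthogonality of the pointwise value spaces $\bc\, C_\Gamma(\lambda)$ --- the cross terms cancel only after integration, as Parseval on $(0,1)$ shows in the example. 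Without dense range, Pedersen's Corollary~1.11 is unavailable, and an isometry alone gives no atomicity whatsoever: Plancherel embeds $L^2(\Gamma)$ isometrically into $L^2(\mathfrak m_d)$, which is nonatomic. By Remark \ref{remi3} the failure is intrinsic, not an artifact of a bad choice of $C$, since any other matrix field differs by a unitary $W(\lambda)$ acting on the left, which preserves the inner products of columns. This is why the paper argues globally instead: it first upgrades \eqref{eqi2.0} to a pointwise formula for $L^1\cap L^2$ functions (Lemma \ref{lemma4} --- note that your one-line ``bounded functional extends'' needs exactly this subsequence argument to identify the $L^2(\mu,m)$-limit pointwise $\mu$-a.e.), then uses Proposition \ref{pri4} to get $U(t)E_{\lambda,k}=e^{2\pi i\lambda\cdot t}E_{\lambda,k}$ for $\mu$-a.e.\ $\lambda$ (Lemma \ref{lemma6}), deduces $E_{\lambda,k}\perp E_{\gamma,k'}$ for $\lambda\neq\gamma$, and hence $\mathcal F_C E_{\lambda,k}=\delta_\lambda c$ with $c\neq 0$ (Lemma \ref{lemma8}), which gives $\mu(\{\lambda\})>0$ for $\mu$-a.e.\ $\lambda$.

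The second analytic core, separation of $\Lambda$, you explicitly leave unproved: you correctly observe that a finite union of separated sets need not be separated and then defer the ``delicate point'' to an unspecified quantitative argument --- but that point is part of the theorem's assertion, and in your setup the sets $\Lambda_\Gamma$ do not even exist once the spectral-pair step collapses. The paper's argument is short: if $\lambda_n\neq\lambda_n'$ in $\supp\mu$ with $\lambda_n-\lambda_n'\to 0$, orthogonality of the normalized eigenfunctions gives $\sum_\Gamma \bar C_{1,\Gamma}(\lambda_n)C_{1,\Gamma}(\lambda_n')\hat\chi_\Gamma(\lambda_n'-\lambda_n)=0$; replacing $\hat\chi_\Gamma(\lambda_n'-\lambda_n)$ by $\hat\chi_\Gamma(0)=\mathfrak m(\Gamma)$ up to $\epsilon$ and using that the unit vectors $v_n=\bigl(a_n\bar C_{1,\Gamma}(\lambda_n)\sqrt{\mathfrak m(\Gamma)}\bigr)_\Gamma$ lie in the compact unit sphere of $\bc^{p(\Omega)}$ produces a contradiction --- this is precisely where $p(\Omega)<\infty$ and $\min_\Gamma\mathfrak m(\Gamma)>0$ enter. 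The portions of your proposal downstream of these two cores are sound and agree with the paper: the identification $\mathcal F_C^{-1}v_{n,l}=\sqrt{w_n}\,E_{\lambda_n,l}$ at an atom, the eigenrelations via \eqref{eqi2.1} and Proposition \ref{pri4}, Gram--Schmidt inside each $E(\lambda)$, and the converse through Theorem \ref{thi2}(ii) with $C_{k,\Gamma}(\lambda)=\bar\alpha_{k,\Gamma}(\lambda)$ --- but they cannot stand without atomicity and separation established first, so the proof as proposed does not go through.
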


 We will separate the proof into a series of lemmas. 
	\begin{lemma}
		\label{lemma4}
	Suppose that $\Omega$ has finitely many components and let $\varphi\in L^{1}(\Omega)\cap L^{2}(\Omega),$ then, $$(\mathcal{F}_{C}\varphi)(\lambda)=C(\lambda)(\hat{\varphi}_{\Gamma})_{\Gamma} \text{   for $\mu$-a.e. $\lambda\in \mathbb{R}^{d}.$  }$$
	
	\end{lemma}
	\begin{proof}
	Since $\varphi$ is in $L^{1}(\Omega)$ it follows that $\varphi_{\Gamma}\in L^{1}(\Omega)$ for all components $\Gamma$ and therefore $\hat{\varphi}_{\Gamma}$ is continuous and vanishes at $\infty.$
	Suppose now $\{\varphi_{n}\}$ is a sequence in $C_{0}^{\infty}(\Omega)$ and $\varphi_{n}\to \varphi$ in $L^{1}(\Omega)\cap L^{2}(\Omega)$. Then the same is true for $\varphi_{n,\Gamma}\to \varphi_{\Gamma}$.

	This implies that 
	$$|\hat{\varphi}_{n,\Gamma}(\lambda)-\hat{\varphi}_{\Gamma}(\lambda)|=|\int_{\mathbb{R}^{d}}({\varphi}_{n,\Gamma}(x)-{\varphi}_{\Gamma}(x))e^{-2\pi i\lambda\cdot x}dx|$$$$
	\leq\int_{\brr^d} |{\varphi}_{n,\Gamma}(x)-{\varphi}_{\Gamma}(x)|dx=\|{\varphi}_{n,\Gamma}-{\varphi}_{\Gamma}\|_{1}.$$
	Thus  $\hat{\varphi}_{n,\Gamma}$ converges uniformly to $\hat{\varphi}_{\Gamma}$ on $\mathbb{R}^{d}$. 
	Then, since $$\mathcal{F}_{C}\varphi=\lim_{n\to\infty} \mathcal{F}_{C}\varphi_{n}$$ in $L^{2}(\mu,m)$, there exists a subsequence which we label by the same notation $\{\mathcal{F}_{C}\varphi_{n}\}$ that converges to $\mathcal{F}_{C}\varphi$ pointwise $\mu$-a.e. on $\mathbb{R}^{d}.$

	We have for $\mu$-a.e. $\lambda\in\mathbb{R}^{d}$,
	$$\mathcal{F}_{C}\varphi(\lambda)=\lim_{n\to\infty}\mathcal{F}_{C}\varphi_{n}(\lambda)=\lim_{n\to\infty}C(\lambda)(\hat{\varphi}_{n,\Gamma}(\lambda))_{\Gamma}$$
	$$=C(\lambda)(\hat{\varphi}_{\Gamma}(\lambda))_{\Gamma}\text{ (since $\hat{\varphi}_{n,\Gamma}\to\hat{\varphi}_{\Gamma} $ uniformly). }$$
	
	\end{proof}
	
	\begin{lemma}
	\label{lemma5}
	Suppose $\mathfrak m(\Omega)<\infty$ and $\Omega$ has finitely many components, and let $\varphi\in L^{2}(\Omega),t\in \mathbb{R}^{d}.$\\
	Then, for $\mu$-a.e. $\lambda\in \mathbb{R}^{d}$ 
	$$C(\lambda)(\widehat{(U(t)\varphi)_{\Gamma}}(\lambda))_{\Gamma}=e^{2\pi i\lambda\cdot t}C(\lambda)(\hat{\varphi}_{\Gamma}(\lambda))_{\Gamma}.$$
	\end{lemma}
	\begin{proof}
	Since $\mathfrak m(\Omega)<\infty$, we have that $\varphi,U(t)\varphi\in L^1(\Omega)\cap L^2(\Omega)$.	With Lemma \ref{lemma4}, we know that 
	$$\mathcal{F}_{C}(U(t)\varphi)(\lambda)=C(\lambda)(\widehat{(U(t)\varphi)_{\Gamma}}(\lambda))_{\Gamma}$$
	On the other hand, with Proposition \ref{pri4},
	$$\mathcal{F}_{C}(U(f)\varphi)(\lambda)=e^{2\pi i\lambda\cdot t}\mathcal{F}_{C}\varphi(\lambda)=e^{2\pi i\lambda\cdot t}C(\lambda)(\hat{\varphi}_{\Gamma}(\lambda))_{\Gamma}.$$
	Hence the conclusion follows. 
	
	\end{proof}
	
	\begin{lemma}
	\label{lemma6}
	Assume that $\mathfrak m(\Omega)<\infty$ and $\Omega$ has finitely many components. Then for $t\in\brr^d$ and for $\mu$-a.e. $\lambda\in\brr^d$,
	$$U(t)(E_{\lambda,k})=e^{2\pi i\lambda\cdot t}E_{\lambda,k},\text{  for all $k=1,\dots,m(\lambda)$.}$$
	\end{lemma}
	\begin{proof}
	Fix $t\in\brr^d$. We have for $\varphi\in C_{0}^{\infty}(\Omega)$,
	$$\ip{ U(t)\sum_{\Gamma}\bar{C}_{k,\Gamma}(\lambda)\chi_{\Gamma}e_{\lambda}}{\varphi}=\ip{ \sum_{\Gamma}\bar{C}_{k,\Gamma}(\lambda)\chi_{\Gamma}e_{\lambda}}{U(-t)\varphi}
$$$$	=\sum_{\Gamma}\bar{C}_{k,\Gamma}(\lambda)\ip{ e_{\lambda}}{(U(-t)\varphi)_{\Gamma}}$$$$=\sum_{\Gamma}\bar{C}_{k,\Gamma}(\lambda)\cj{\widehat{(U(-t)\varphi)_{\Gamma}}(\lambda)}=\sum_{\Gamma}\bar{C}_{k,\Gamma}(\lambda)\overline{e^{-2\pi i\lambda\cdot t}\hat{\varphi}_{\Gamma}(\lambda)}\text{ (by Lemma \ref{lemma5}) }$$
	for $\mu$-a.e. $\lambda$, and $k=1,\dots,m(\lambda).$

	On the other hand
	$$\ip{ e^{2\pi i\lambda\cdot t}\left(\sum_{\Gamma}\bar{C}_{k,\Gamma}(\lambda)\chi_{\Gamma}\right)e_{\lambda}}{\varphi} =e^{2\pi i\lambda\cdot t}\sum_{\Gamma}\bar{C}_{k,\Gamma}(\lambda)\ip{ e_{\lambda}}{\varphi_\Gamma}$$$$=e^{2\pi i\lambda\cdot t}\sum_{\Gamma}\bar{C}_{k,\Gamma}(\lambda)\overline{\hat{\varphi}_{\Gamma}(\lambda)}.$$
	It follows that 
	\begin{equation}
		\label{6}
		\ip{ U(t)\sum_{\Gamma}\bar{C}_{k,\Gamma}(\lambda)\chi_{\Gamma}e_{\lambda}}{\varphi}=\ip{ e^{2\pi i\lambda\cdot t}\left(\sum_{\Gamma}\bar {C}_{k,\Gamma}(\lambda)\chi_{\Gamma}\right)e_{\lambda}}{\varphi},
	\end{equation}
 for $\mu$-a.e. $\lambda$.
	Note that the $\mu$-null set in equation \eqref{6} depends on $\varphi$.

	Pick a countable dense set $\mathfrak{D}$ of $\varphi$'s.
	Then equation \eqref{6} holds for any $\varphi\in \mathfrak{D}$ and $\lambda$ outside a $\mu$-null set.
	Therefore,
	$$U(t)\left(\sum_{\Gamma}\bar{C}_{k,\Gamma}(\lambda)\chi_{\Gamma}e_{\lambda}\right)=e^{2\pi i\lambda\cdot t}\left(\sum_{\Gamma}\bar{C}_{k,\Gamma}(\lambda)\chi_{\Gamma}e_\lambda\right )$$
 for $\lambda$ outside a $\mu$-null set.

	\end{proof}
	
	\begin{lemma}
	For two distinct points $\lambda$ and $\gamma$ outside the $\mu$-null set in Lemma \ref{lemma6}, $E_{\lambda,k}$ is orthogonal to $E_{\gamma,k'}$, 
	where $k=1,\dots,m(\lambda)$ and $k'=1,\dots,m(\gamma).$
	\end{lemma}
	\begin{proof}
	We have, by Lemma \ref{lemma6}, 
	$$A:=\ip{ U(t)E_{\lambda,k}}{E_{\gamma,k'}}=e^{2\pi i\lambda\cdot t}\ip{ E_{\lambda,k}}{E_{\gamma,k'}}.$$
	Also,
	$$A=\ip{ E_{\lambda,k}}{U(-t)E_{\gamma,k'}}=e^{2\pi i\gamma\cdot t}\ip{ E_{\lambda,k}}{E_{\gamma,k'}}.$$
	Thus, since $\lambda\neq \gamma$, we must have $\ip{ E_{\lambda,k}}{E_{\gamma,k'}}=0.$
	\end{proof}
	
	\begin{lemma}
	\label{lemma8}
	For $\lambda$ outside the $\mu-$null set in Lemma \ref{lemma6}, $$\mathcal{F}_{C}(E_{\lambda,k})(\gamma)=0,\text{  for $\mu$-a.e. $\gamma\neq \lambda.$  }$$
	\end{lemma}
	\begin{proof}
	$$\mathcal{F}_{C}(E_{\lambda,k})(\gamma)=C(\gamma)(\widehat{(E_{\lambda,k})_{\Gamma}}(\gamma))_\Gamma=C(\gamma)\left(\bar{C}_{k,\Gamma}(\lambda)\widehat{\chi_{\Gamma}e_{\lambda}}(\gamma)\right)_{\Gamma}$$
	$$=\left(\sum_{\Gamma}C_{k',\Gamma}(\gamma)\bar{C}_{k,\Gamma}(\lambda)\hat{\chi}_{\Gamma}(\gamma-\lambda)\right)_{k'=1,\dots,m(\gamma)}.$$
	On the other hand, for $k'=1,\dots,m(\gamma)$,
	$$0=\ip{ E_{\lambda,k}}{E_{\gamma,k'}}=\sum_{\Gamma}\bar{C}_{k,\Gamma}(\lambda)C_{k',\Gamma}(\gamma)\int_{\Gamma}e_{\lambda}\bar{e_{\gamma}}$$
	$$=\sum_{\Gamma}C_{k',\Gamma}(\gamma)\bar{C}_{k,\Gamma}(\lambda)\hat{\chi}_{\Gamma}(\gamma-\lambda).$$
	Thus, for $\gamma\neq \lambda$,
	$$\mathcal{F}_{C}(E_{\lambda,k})(\gamma)=0.$$
	\end{proof}
	\begin{lemma}
	If $\mathfrak m(\Omega)<\infty$ and $\Omega$ has finitely many components, then the measure $\mu$ is atomic. 
	\end{lemma}
	\begin{proof}
	From Lemma \ref{lemma8}, we have that for $\mu$-a.e. $\lambda$, $\mathcal{F}_{C}E_{\lambda,k}(\gamma)=0$ for $\mu$-a.e. $\gamma\neq\lambda.$ 
	
	Thus $\mathcal{F}_{C}(E_{\lambda,k})=\delta_{\lambda}c$ in $L^{2}(\mu,m)$ for some constant vector $c\neq0$ in $\mathbb{C}^{m(\lambda)}.$ 
	This means that, $$\norm{c}^{2}\mu(\{\lambda\})=\norm{\delta_{\lambda}c}^{2}_{L^{2}(\mu,m)}=\norm{\mathcal{F}_{C}E_{\lambda,k}}^{2}_{L^{2}(\Omega)}>0,$$
	which implies that $\mu(\{\lambda\})>0$ for $\mu$-a.e. $\lambda.$ Therefore $\mu$ is atomic. 
	\end{proof}
	Note also, that since $\mathcal F_C(E_{\lambda,k})=\delta_\lambda c$ and $H_j=\mathcal F_C^{-1}M_{\lambda_j}\mathcal F_{C}$, we get that $E_{\lambda,k}$ is in the eigenspace $E(\lambda)$, which means also that it is in the domain $\mathscr D(H)$.
	\begin{lemma}
	If $\mathfrak m(\Omega)<\infty$ and $\Omega$ has finitely many components, then the support of $\mu$ is separated. 
	\end{lemma}
	\begin{proof}
	Suppose that the support of $\mu$ is not separated and take some distinct $\lambda_{n}\neq \lambda_{n}'$ in the support of $\mu$ with $\lambda_{n}-\lambda_n'\to0.$ Use some constant $a_{n}$ to normalize 
	$$1=\norm{a_{n}E_{\lambda_{n},1}}_{L^2(\Omega)}=(\sum_{\Gamma}|a_{n}|^{2}|C_{1,\Gamma}(\lambda_{n})|^{2}\mathfrak m(\Gamma))^{\frac{1}{2}},$$
	and similarly $1=\norm{a_{n}'E_{\lambda_{n}',1}}_{L^2(\Omega)}$.

	We have 
	$$0=\ip{ a_{n}E_{\lambda_{n},1}}{ a_{n}'E_{\lambda_{n}',1}}=a_{n}\bar{a}_{n}'\sum_{\Gamma}\bar{C}_{1,\Gamma}(\lambda_{n})C_{1,\Gamma}(\lambda_{n}')\hat{\chi}_{\Gamma}(\lambda_{n}'-\lambda_{n}).$$

	For $n$ large $\lambda_{n}'-\lambda_{n}$ is close to zero, and since all the functions $\hat{\chi}_{\Gamma}$ are continuous with $\hat\chi_\Gamma(0)=\mathfrak m(\Gamma)$, we can make sure that 
	\begin{equation}
		\label{eq9}
		|\hat{\chi}_{\Gamma}(\lambda_{n}'-\lambda_{n})- \mathfrak m(\Gamma)|<\epsilon\text{  for $n\geq N$ }
	\end{equation}

	Then,
	$$|\sum_{\Gamma}a_{n}\bar{a}_{n}'\bar{C}_{1,\Gamma}(\lambda_{n})C_{1,\Gamma}(\lambda_{n}')\mathfrak m(\Gamma)|$$$$=|\sum_{\Gamma}a_{n}\bar{a}_{n}'\bar{C}_{1,\Gamma}(\lambda_{n})C_{1,\Gamma}(\lambda_{n}')(\mathfrak m(\Gamma)-\hat{\chi_{\Gamma}}(\lambda_{n}'-\lambda_{n})+\hat{\chi_{\Gamma}}(\lambda_{n}'-\lambda_{n}))|$$
	$$\leq|\sum_{\Gamma}a_{n}\bar{a}_{n}'\bar{C}_{1,\Gamma}(\lambda_{n})C_{1,\Gamma}(\lambda_{n}')(\mathfrak m(\Gamma)-\hat{\chi_{\Gamma}}(\lambda_{n}'-\lambda_{n}))|
	$$$$+|\sum_{\Gamma}a_{n}\bar{a_n}'\bar{C}_{1,\Gamma}(\lambda_{n})C_{1,\Gamma}(\lambda_{n}')\hat{\chi_{\Gamma}}(\lambda_{n}'-\lambda_{n})|$$
	$$\leq \epsilon(\sum_{\Gamma}|a_{n}\bar{C}_{1,\Gamma}(\lambda_{n})|^{2})^{\frac{1}{2}}(\sum_{\Gamma}|a_{n}'\bar{C}_{1,\Gamma}(\lambda_{n}')|^{2})^{\frac{1}{2}}$$
by \eqref{eq9} and the Schwarz inequality.
	But, 
	$$\sum_{\Gamma}|a_{n}\bar{C}_{1,\Gamma}(\lambda_{n})|^{2}=\sum_{\Gamma}|a_{n}\bar{C}_{1,\Gamma}(\lambda_{n})|^{2}\mathfrak m(\Gamma)\frac{1}{\mathfrak m(\Gamma)}$$
	$$\leq \norm{a_{n}E_{\lambda_{n},1}}_{L^2(\Omega)}^{2}\cdot\frac{1}{\min_{\Gamma}\mathfrak m(\Gamma)}=\frac{1}{\min_{\Gamma}\mathfrak m(\Gamma)}=:c.$$
	We denote by $v_{n}$ the vector 
	$$v_{n}=(a_{n}\bar{C}_{1,\Gamma}(\lambda_{n})\sqrt{ \mathfrak m(\Gamma)})_{\Gamma}\in \mathbb{C}^{p(\Omega)},$$
	and similarly for $v_n'$ with $\lambda_n'$. 
	
	We have $|\langle v_{n},v_{n}'\rangle|<\epsilon\cdot c$ for $n\geq N.$		Also, $\norm{v_{n}}^{2}=\norm{a_{n}E_{\lambda_{n},1}}_{L^2(\Omega)}^2=1=\norm{v_n'}^2$ for all $n.$ 
	But, since $\#\Gamma=p(\Omega)<\infty,$ the vectors $v_{n}, v_n'$ are in a finite dimensional space, and by passing to a subsequence, we can assume that $v_{n}\to v$ and $v_n'\to v'$ in $\mathbb{C}^{\#\Gamma}.$

	However, this implies that $|\langle v_{n},v_{n}'\rangle|\to 1$ which contradicts $|\langle v_{n},v_{n}'\rangle|\leq \epsilon\cdot c.$

	\end{proof}

	We finalize here the proof of Theorem \ref{thf1}:

	\begin{proof}[Proof of Theorem \ref{thf1}]
	
	We prove that the functions $\{E_{\lambda,k} : k=1,\dots, m(\lambda)\}$ are linearly independent. By Theorem \ref{thi2}, the matrix $C(\lambda)$ has full rank. Then the row vectors 
	$\{( C_{k,\Gamma}(\lambda))_\Gamma : k=1,\dots, m(\lambda)\}$ are linearly independent. 
	
	Suppose now there are some constants $\alpha_k$, $k=1,\dots,m(\lambda)$ such that $\sum_{k=1}^{m(\lambda)}\alpha_k E_{\lambda,k}=0$. Then 
	$$0=\sum_k\alpha_k\sum_\Gamma \bar C_{k,\Gamma}\chi_\Gamma e_\lambda=
	\sum_\Gamma\left(\sum_k \alpha_k\bar C_{k,\Gamma}\right)\chi_\Gamma e_\lambda.$$
	This implies that $\sum_k \alpha_k \bar C_{k,\Gamma}=0$ for all $\Gamma$ which means that $\sum_k \bar \alpha_k (C_{k,\Gamma})_\Gamma=0$. Since these row vectors are linearly independent, it follows that $\alpha_k=0$ for all $k$.
	
	We know from Theorem \ref{thi2} that $\mathcal F_C^{-1}M_{\lambda_j}\mathcal F_C=H_j$. Therefore $\mathcal F_C$ makes the transfer between the spectral projections and so $E(\lambda)=\mathcal F_C^{-1}M_{\chi_{\{\lambda\}}}\mathcal F_C$. In particular, the dimension of $E(\lambda)$ is the dimension of the range of the projection $M_{\chi_{\{\lambda\}}}$ which is $m(\lambda)$. Thus, the functions $\{E_{\lambda,k}: k=1,\dots,m(\lambda)\}$ form a basis for $E(\lambda)$. This implies also that some linear combinations $\tilde E_{\lambda,k}$ form an {\it orthonormal basis} for $E(\lambda)$. Since $\oplus_{\lambda\in\Lambda} E(\lambda)=I_{L^2(\Omega)}$ it follows that $\{\tilde E_{\lambda,k}: \lambda\in\Lambda, k=1,\dots,m(\lambda)\}$ form an orthonormal basis for $L^2(\Omega)$.
	
	From Lemma \ref{lemma8} we have that $\mathcal F_CE_{\lambda,k}=\delta_\lambda\cdot c$ for some vector $c$ in $m(\lambda)$. Since 
	$M_{\chi_{\{\lambda\}}}(\delta_\lambda c)=\delta_{\lambda} c$ it follows, by taking the inverse transformation $\mathcal F_C^{-1}$, that $E(\lambda) E_{\lambda,k}=E_{\lambda,k}$. This implies also that $E_{\lambda,k}\in\mathscr{D}(H_j)$, $H_j E_{\lambda,k}=\lambda_j E_{\lambda,k}$, and $U(t) E_{\lambda,k}=e^{2\pi i \lambda\cdot t} E_{\lambda,k}$ for all $j=1,\dots,d$. 
	
	We check equation \eqref{eq11.1}. We have, for $\varphi\in C_0^\infty(\Omega)$ and $\lambda\in\brr^d$, 
	$$\mathcal F_C\varphi(\lambda)=\left(\sum_\Gamma C_{k,\Gamma}(\lambda)\hat\varphi_\Gamma(\lambda)\right)_k
	=\left(\sum_\Gamma C_{k,\Gamma}(\lambda)\ip{\varphi_\Gamma}{e_\lambda}\right)_k
	$$$$=\left(\ip{\varphi}{\sum_\Gamma \bar C_{k,\Gamma}(\lambda)\chi_\Gamma e_\lambda}\right)_k
	=\left(\ip{\varphi}{E_{\lambda,k}}\right)_k.$$

		For the converse, repeating a computation from above, we see that, for $\lambda\in\Lambda$, 
	$$\mathcal F\varphi(\lambda)=\left( \ip{\varphi}{\tilde E_{\lambda,k}}\right)_{k=1,\dots,m(\lambda)}=\left(\sum_\Gamma \bar\alpha_{k,\Gamma}(\lambda)\hat\varphi_\Gamma(\lambda)\right)_k$$$$=C(\lambda)F\varphi(\lambda)=\mathcal F_C\varphi(\lambda),$$
	where $C_{k,\Gamma}(\lambda)=\bar\alpha_{k,\Gamma}(\lambda)$. The converse follows from Theorem \ref{thi2}(ii). 
\end{proof}

\section{One dimension}\label{sec4}

In this section we focus on the one-dimensional case $d=1$; $\Omega$ is now a finite or countable union of disjoint, possibly unbounded intervals
$$\Omega=\cup_{i\in I}(\alpha_i,\beta_i).$$

 We know from Theorem \ref{thai} that the unitary group associated to a self-adjoint extension $H$ of the differential operator $D$ has the integrability property (Definition \ref{defp5}), that is, for small values of $t$ and if $x$ and $x+t$ are in the same interval of $\Omega$, then $U(t)$ acts as translation: $U(t)f(x)=f(x+t)$. The question is, what happens when $x+t$ goes through a boundary point $\alpha_i$ or $\beta_i$? In Theorem \ref{th4.1}, we show that when a point $x+t$ transitions through, say, a left-boundary point $\alpha_i$, it has to ``split'' into the right-boundary points $\beta_k$ with probabilities given by the absolute value square of the entries $b_{i,k}$, $k\in I$ of a unitary matrix $B$.  

\begin{theorem}\label{th4.1}
Let $\Omega$ be a finite or countable union of disjoint, possibly unbounded intervals,
$$\Omega=\bigcup_{i\in I}(\alpha_i,\beta_i).$$
Let $\{U(t)\}_{t\in\brr}$ be a strongly continuous unitary group with the integrability property on $L^2(\Omega)$. 
Assume in addition that the lengths of the intervals are bounded below by a positive number:
\begin{equation}
	\label{eq4.1.1}
	l:=\inf_{i\in I}(\beta_i-\alpha_i)>0.
\end{equation}
Then there exists a unitary matrix $B=(b_{i,k})_{i,k\in I,\alpha_i>-\infty,\beta_k<\infty}$, such that for any $\epsilon>0$ with $0<\epsilon<l$, and any $i\in I$ with $\alpha_i>-\infty$, if the function $f\in L^2(\Omega)$ is supported on $[\alpha_i,\alpha_i+\epsilon]$, then $U(\epsilon)f$ is supported on $\cup_{k\in I, \beta_k<\infty}[\beta_k-\epsilon,\beta_k]$ and 
	\begin{equation}
	\label{eq4.2.0}
	U(\epsilon)f=\sum_{k\in I,\beta_k<\infty}b_{i,k}T(\alpha_i+\epsilon-\beta_k)f.
\end{equation}

Similarly if $\beta_k<\infty$ and $f$ is supported on $[\beta_k-\epsilon,\beta_k ]$ then $U(-\epsilon) f$ is supported on $\cup_{j,\alpha_j>-\infty}[\alpha_j,\alpha_j+\epsilon]$ and 
\begin{equation}
	\label{eq4.2.0b}
	U(-\epsilon)f=\sum_{j\in I, \alpha_j>-\infty}\bar b_{j,k} T(\beta_k-\alpha_j-\epsilon)f.
\end{equation}

Conversely, if $B=(b_{i,k})_{i,k\in  I,\alpha_i>-\infty,\beta_k<\infty}$ is a unitary matrix, then there exists a unitary group with the integrability property $\{U(t)\}$ such that the relations \eqref{eq4.2.0} and \eqref{eq4.2.2} are satisfied. 

\end{theorem}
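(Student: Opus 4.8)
The plan is to reduce everything to a single unitary $U(\epsilon)$ with $0<\epsilon<l$ and analyze how it moves functions supported in thin strips at the endpoints. Write $\mathcal{L}=\bigoplus_{i:\,\alpha_i>-\infty}\mathcal{L}_i$ with $\mathcal{L}_i=L^2(\alpha_i,\alpha_i+\epsilon)$ (left strips), and $\mathcal{R}=\bigoplus_{k:\,\beta_k<\infty}\mathcal{R}_k$ with $\mathcal{R}_k=L^2(\beta_k-\epsilon,\beta_k)$ (right strips), as subspaces of $L^2(\Omega)$. The first ingredient is a \emph{clean translation} lemma: if $f\in L^2(\Omega)$ is supported on $[\alpha_i+\epsilon,\beta_i]$ then $U(\epsilon)f=T(\epsilon)f$, and symmetrically $U(-\epsilon)$ is honest backward translation on functions supported on $[\alpha_i,\beta_i-\epsilon]$. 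I would prove this by slicing $\epsilon$ into $N$ equal steps, applying the integrability property of Definition \ref{defp5} at each small step (the support stays inside the interval at every intermediate time since $\epsilon<l\le\beta_i-\alpha_i$), using $U(\epsilon)=U(\epsilon/N)^N$ and Lemma \ref{lem2.8} to track supports.

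Next I would show $U(\epsilon)\mathcal{L}=\mathcal{R}$. By the clean-translation lemma $U(\epsilon)$ maps $\mathcal{L}^\perp$ (functions vanishing within $\epsilon$ of every finite left endpoint) into $\mathcal{R}^\perp$; since $U(\epsilon)$ is unitary and $U(W^\perp)=(UW)^\perp$, this gives $U(\epsilon)\mathcal{L}\supseteq\mathcal{R}$, and running the argument for $U(-\epsilon)$ gives the reverse inclusion. Thus $W:=U(\epsilon)|_{\mathcal{L}}\colon\mathcal{L}\to\mathcal{R}$ is a unitary isomorphism and $U(\epsilon)=T(\epsilon)$ on $\mathcal{L}^\perp$. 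The identical reasoning at every scale $0<\delta<l$ shows $U(\delta)$ carries scale-$\delta$ left strips onto scale-$\delta$ right strips, which I use recursively below.

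The crux is identifying the blocks $W_{k,i}:=Q_kU(\epsilon)|_{\mathcal{L}_i}\colon\mathcal{L}_i\to\mathcal{R}_k$ ($Q_k$ the projection onto $\mathcal{R}_k$). Identify $\mathcal{L}_i$ and $\mathcal{R}_k$ with $L^2(0,\epsilon)$ via the offset maps $u(s)\mapsto u(\alpha_i+s)$ and $v(s)\mapsto v(\beta_k-\epsilon+s)$, under which the rigid translation $T(\alpha_i+\epsilon-\beta_k)$ becomes the identity. I would first prove, for every $0<a\le\epsilon$, that $W_{k,i}$ preserves both offset subspaces $L^2(0,a)$ and $L^2(a,\epsilon)$: writing $U(\epsilon)=U(\epsilon-a)U(a)$ and applying the scale-$a$ (resp.\ scale-$(\epsilon-a)$) version of the previous step followed by a clean translation sends a left-aligned (resp.\ right-aligned) strip of width $a$ to the correspondingly aligned strip of width $a$. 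An operator preserving every $L^2(0,a)$ commutes with all the corresponding projections, hence lies in the commutant of the maximal abelian algebra of multiplications, so $W_{k,i}=M_g$ is a multiplication operator. Second, for $f\in\mathcal{L}_i$ supported on $[\alpha_i+r,\alpha_i+\epsilon]$ one has $T(r)f=U(r)f$ (clean translation) and $U(\epsilon)f\in\mathcal{R}$, so $U(\epsilon)T(r)f=U(r)U(\epsilon)f=T(r)U(\epsilon)f$; in offset coordinates this says $M_g$ commutes with every internal shift, forcing $g$ to be a constant $b_{i,k}$. Hence $W_{k,i}=b_{i,k}\,T(\alpha_i+\epsilon-\beta_k)$, which is \eqref{eq4.2.0}. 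Unitarity of $W$ from $\mathcal{L}$ onto $\mathcal{R}$ (the rigid translations being isometries onto the strips) then translates into $B=(b_{i,k})$ being a unitary matrix; formula \eqref{eq4.2.0b} is read off from $U(-\epsilon)|_{\mathcal{R}}=W^{-1}=W^*$, whose $(i,k)$ block is $\bar b_{i,k}\,T(\beta_k-\alpha_i-\epsilon)$. Independence of $B$ on $\epsilon$ follows by splitting $\epsilon=\epsilon_1+\epsilon_2$ and computing $U(\epsilon)$ on a left strip in two stages, yielding $b^{(\epsilon)}_{i,k}=b^{(\epsilon_1)}_{i,k}=b^{(\epsilon_2)}_{i,k}$.

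For the converse I would reverse the construction: given a unitary $B$, define $U(t)$ for $0<t<l$ to be $T(t)$ on the scale-$t$ space $\mathcal{L}^\perp$ and $U(t)f=\sum_k b_{i,k}T(\alpha_i+t-\beta_k)f$ on each left strip $\mathcal{L}_i$, which is unitary precisely because $B$ is. The point needing care — and the main obstacle on this side — is verifying the group law $U(s)U(t)=U(s+t)$ for $s,t,s+t\in(0,l)$: this is the two-stage computation above read as a consistency check, where unitarity of $B$ and the use of the \emph{same} matrix at every scale guarantee the boundary transitions compose correctly. Strong continuity is immediate from the explicit formula, the integrability property holds by construction, and one extends to all $t\in\br$ by $U(t)=U(t/n)^{n}$, well defined thanks to the small-scale group law. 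Throughout, the genuinely hard step is the scalar identification of the blocks $W_{k,i}$; once the boundary map is shown to be ``memoryless'' (a multiplication operator commuting with internal translations), the rest is bookkeeping with supports and unitarity.
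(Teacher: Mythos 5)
Your proposal is correct, and its core argument takes a genuinely different route from the paper's. The paper identifies the constants $b_{i,k}$ by a direct computation on indicator functions: it decomposes $g_1=U(\epsilon)\chi_{(\alpha_i,\alpha_i+\epsilon)}$ as $\sum_{j=0}^{n-1} T\bigl(\tfrac{n-j-1}{n}\epsilon\bigr)U(\epsilon/n)\chi_{(\alpha_i,\alpha_i+\epsilon/n)}$, concludes that $g_1$ restricted to each $[\beta_k-\epsilon,\beta_k]$ has period $\epsilon/n$ for every $n$ and is therefore constant, extends \eqref{eq4.2.0} by linearity and density over step functions, and obtains unitarity of $B$ by constructing a second isometry matrix $C$ for $U(-\epsilon)$ and checking $CB=I$. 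You work operator-theoretically instead: your clean-translation lemma plus the orthocomplement trick is an exact substitute for the paper's Lemma \ref{lem4.2} (and yields the stronger statement $U(\epsilon)\mathcal{L}=\mathcal{R}$ at every scale), and then you show each block $Q_kU(\epsilon)|_{\mathcal{L}_i}$ commutes with the nest of projections $\chi_{(0,a)}$, hence lies in the commutant of the multiplication MASA and is a multiplication operator, whose symbol is forced constant by intertwining with internal shifts. This buys you unitarity of $B$ for free (since $W=U(\epsilon)|_{\mathcal{L}}$ is onto $\mathcal{R}$, formula \eqref{eq4.2.0b} is just $W^*=W^{-1}$, whereas the paper must build $C$ separately), and you make the $\epsilon$-independence of $B$ explicit, which the paper leaves implicit. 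Your converse is essentially the paper's construction: the same piecewise definition of $U(t)$ for small $t$, the same case-checking of the group law, and extension by powers; note the paper also proves an $\ell^2$ lemma, using unitarity of $B^T$, to make the boundary sums converge when $I$ is infinite --- that detail is hiding inside your phrase ``unitary precisely because $B$ is.''

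Two spots in your write-up need more care, though both are repairable with tools you already have. First, in the shift-commutation step, knowing only that $U(\epsilon)f\in\mathcal{R}$ does not justify $U(r)U(\epsilon)f=T(r)U(\epsilon)f$: if $\epsilon+r>l$, a function merely supported on $[\beta_k-\epsilon,\beta_k]$ could a priori cross $\alpha_k$ during the translation. You need the refined support information from your sub-strip step --- for $f$ supported on $[\alpha_i+r,\alpha_i+\epsilon]$, the image $U(\epsilon)f$ is right-aligned of width $\epsilon-r$, so the translated support $[\beta_k-\epsilon,\beta_k-r]$ stays inside $(\alpha_k,\beta_k)$ precisely because $\epsilon<l$ --- and since you proved exactly this, you should invoke it there. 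Second, in the sub-strip step the order of composition matters: for a left-aligned strip of width $a$ you must cross the boundary first ($U(\epsilon)=U(\epsilon-a)U(a)$: boundary crossing at scale $a$, then clean inward translation by $\epsilon-a$), while for a right-aligned strip you translate cleanly first ($U(\epsilon)=U(a)U(\epsilon-a)$); your parenthetical ``(resp.)'' phrasing inverts this pairing, and with the wrong order the intermediate support hypotheses fail, so the factorizations should be stated the right way around.
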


\begin{proof}
We begin with a lemma:	
	
	\begin{lemma}
		\label{lem4.2}
		Let $0<\epsilon<l$. 
		Suppose that $\alpha_i>-\infty$ and that the function $f$ in $L^2(\Omega)$ is supported on $[\alpha_i,\alpha_i+\epsilon]$. Then $U(\epsilon)f$ is supported on $\cup_{k\in I, \beta_k<\infty}[\beta_k-\epsilon,\beta_k]$. 
	\end{lemma}

	\begin{proof}
		Let $k\in I$ be arbitrary. Take $\varphi\in C_0^\infty(\Omega)$ supported on $[\alpha_k,\beta_k-\epsilon]$ (if $\beta_k=\infty$ then $\beta_k-\epsilon=\infty$). Then, since $\{U(t)\}$ has the integrability property, we have, by Lemma \ref{lem2.8}, that $U(-\epsilon)\varphi$ is supported on $[\alpha_k+\epsilon,\beta_k]$. This implies that $U(-\epsilon)\varphi$ is orthogonal to $f$. Then 
		$\varphi=U(\epsilon)U(-\epsilon)\varphi\perp U(\epsilon)f$. Since $\varphi$ is arbitrary, it follows that $U(\epsilon)f$ has to be supported on $\cup_k [\beta_k-\epsilon,\beta_k]$, and for the case when $\beta_k=\infty$, the interval $[\beta_k-\epsilon,\beta_k]$ is empty. 
		
	\end{proof}
	
	Let $0<\epsilon<l$. With Lemma \ref{lem4.2} we know that $U(\epsilon)\chi_{(\alpha_i,\alpha_i+\epsilon)}$ is supported on $\cup_{k,\beta_k<\infty} [\beta_k-\epsilon,\beta_k]$.  We will show that $U(\epsilon)\chi_{(\alpha_i,\alpha_i+\epsilon)}$ is constant on each interval $[\beta_k-\epsilon,\beta_k]$.
	
	Define 
	$$g_n:=U\left(\frac\epsilon{n}\right)\chi_{\left(\alpha_i,\alpha_i+\frac\epsilon{n}\right)},\quad (n\in\bn).$$
	
	We have 
	$$\chi_{(\alpha_i,\alpha_i+\epsilon)}=\sum_{j=0}^{n-1}\chi_{\left(\alpha_i+\frac{j}{n}\epsilon, \alpha_i+\frac{j+1}{n}\epsilon\right)};$$
	\begin{equation}
		\label{eq4.2.2}
			g_1=U(\epsilon)\chi_{(\alpha_i,\alpha_i+\epsilon)}=\sum_{j=0}^{n-1}U(\epsilon)\chi_{\left(\alpha_i+\frac{j}{n}\epsilon, \alpha_i+\frac{j+1}{n}\epsilon\right)}.
	\end{equation}

	We write, using the integrability property of $\{U(t)\}$:
	$$U(\epsilon)\chi_{\left(\alpha_i+\frac{j}{n}\epsilon, \alpha_i+\frac{j+1}{n}\epsilon\right)}=
	U\left(\frac{n-j-1}{n}\epsilon\right)U\left(\frac{\epsilon}{n}\right)U\left(\frac{j}{n}\epsilon\right)\chi_{\left(\alpha_i+\frac{j}{n}\epsilon, \alpha_i+\frac{j+1}{n}\epsilon\right)}$$$$=U\left(\frac{n-j-1}{n}\epsilon\right)U\left(\frac{\epsilon}{n}\right)\chi_{\left(\alpha_i,\alpha_i+\frac\epsilon{n}\right)}
	=U\left(\frac{n-j-1}{n}\epsilon\right)g_n.$$
	Using Lemma \ref{lem4.2}, $g_n$ is supported on $\cup_{k,\beta_k<\infty} [\beta_k-\frac{\epsilon}{n},\beta_k]$; then, with the integrability property 
	$$U\left(\frac{n-j-1}{n}\epsilon\right)g_n=T\left(\frac{n-j-1}{n}\epsilon\right)g_n.$$
	Plugging into \eqref{eq4.2.2}, we obtain that 
	\begin{equation}
		\label{eq4.2.3}
		g_1=\sum_{j=0}^{n-1}T\left(\frac{n-j-1}{n}\epsilon\right)g_n.
	\end{equation}
	Note also that $T\left(\frac{n-j-1}{n}\epsilon\right)g_n$ is supported on $$\cup_{k,\beta_k<\infty}\left[\beta_k-\frac{n-j}{n}\epsilon,\beta_k-\frac{n-j-1}{n}{\epsilon}\right].$$ In particular, these functions are disjointly supported for different $j$'s (up to measure zero). This means that 
	$$g_1(x)=T\left(\frac{n-j-1}{n}\epsilon\right)g_n(x)\mbox{ for }x \in\left[\beta_k-\frac{n-j}{n}\epsilon,\beta_k-\frac{n-j-1}{n}{\epsilon}\right],$$ for any $k\in I$, with $\beta_k<\infty$ and any $j\in\{0,\dots,n-1\}$. Therefore the restriction of $g_1$ to the interval $[\beta_k-\epsilon,\beta_k]$ has period $\frac{\epsilon}{n}$. Since $n$ is arbitrary, the restriction of $g_1$ to the interval $[\beta_k-\epsilon, \beta_k]$ must be some constant $b_{i,k}$.
	
	Moreover, since $\|U(\epsilon)\chi_{(\alpha_i,\alpha_i+\epsilon)}\|^2=\|\chi_{(\alpha_i,\alpha_i+\epsilon)}\|^2$, a simple computation shows that $\sum_{k\in I,\beta_k<\infty}|b_{i,k}|^2=1$. Also, since, for $i\neq i'$, $\chi_{(\alpha_i,\alpha_i+\epsilon)}\perp\chi_{(\alpha_{i'},\alpha_{i'}+\epsilon)}$, it follows that $\sum_{k\in I,\beta_k<\infty}b_{i,k}\cj b_{i',k}=0$.  Thus the matrix $B$ is the matrix of an isometry. 
	
	Consider now a function $f\in L^2(\Omega)$ which is supported on $[\alpha_i,\alpha_i+\epsilon]$. We prove that 
	\begin{equation}
		\label{eq4.2.4}
		U(\epsilon)f=\sum_{k\in I, \beta_k<\infty}b_{i,k}T(\alpha_i+\epsilon-\beta_k)f.
	\end{equation}
	
	It is enough to prove this for the case when $f$ is of the form $f=\chi_{\left(\alpha_i+\frac{j}{n}\epsilon,\alpha_i+\frac{j+1}{n}\epsilon\right)}$. Indeed, if this is true, then by linearity, \eqref{eq4.2.4} is true for piecewise constant functions, and then, by approximation, it is true for any 
	function in $L^2(\Omega)$ supported on $[\alpha_i,\alpha_i+\epsilon]$.
	
	But,
	$$U(\epsilon)\chi_{\left(\alpha_i+\frac{j}{n}\epsilon,\alpha_i+\frac{j+1}{n}\epsilon\right)}=U\left(\frac{n-j-1}{n}\epsilon\right)U\left(\frac\epsilon{n}\right)U\left(\frac jn\epsilon\right)\chi_{\left(\alpha_i+\frac{j}{n}\epsilon,\alpha_i+\frac{j+1}{n}\epsilon\right)}$$$$=U\left(\frac{n-j-1}{n}\epsilon\right)U\left(\frac\epsilon{n}\right)\chi_{(\alpha_i,\alpha_i+\frac{\epsilon}{n})}$$
	$$=U\left(\frac{n-j-1}{n}\epsilon\right)\sum_{k\in I}b_{i,k}\chi_{(\beta_k-\frac\epsilon{n},\beta_k)}=\sum_{k\in I}b_{i,k}\chi_{\left(\beta_k-\frac{n-j}{n}\epsilon,\beta_k-\frac{n-j-1}{n}\epsilon\right)}
	$$$$=\sum_{k\in I}b_{i,k}T(\alpha_i+\epsilon-\beta_k)\chi_{\left(\alpha_i+\frac{j}{n}\epsilon,\alpha_i+\frac{j+1}{n}\epsilon\right)}.$$

	Similar arguments can be used to show that there exists a matrix of an isometry  $C=(c_{k,j})_{k,j\in I, \beta_k<\infty,\alpha_j>-\infty}$,  such that, if $\beta_k<\infty$ and $f$ is supported on $[\beta_k-\epsilon,\beta_k ]$ then $U(-\epsilon) f$ is supported on the union of intervals $\cup_{j,\alpha_j>-\infty}[\alpha_j,\alpha_j+\epsilon]$ and 
	$$U(-\epsilon)f=\sum_{j\in I, \alpha_j>-\infty}c_{k,j} T(\beta_k-\alpha_j-\epsilon)f.$$
	
	Since $U(\epsilon)U(-\epsilon) f=f$ we obtain 
	
$$f=U(\epsilon)\sum_j c_{k,j}T(\beta_k-\alpha_j-\epsilon)f$$$$=\sum_jc_{k,j}\sum_{k'}b_{j,k'} T(\alpha_j+\epsilon-\beta_{k'})T(\beta_k-\alpha_j-\epsilon)f$$
$$=\sum_jc_{k,j}\sum_{k'}b_{j,k'} T(\beta_k-\beta_{k'})f=\sum_{k'}T(\beta_k-\beta_{k'})\sum_jc_{k,j}b_{j,k'}.$$
The function $f$ is supported on $[\beta_k-\epsilon,\beta_k]$ and the function $T(\beta_k-\beta_{k'})f$ is supported on $[\beta_{k'}-\epsilon,\beta_{k'}]$, for any $k'$. This implies that 
$$\sum_{j} c_{k,j}b_{j,k'}=\delta_{k,k'}.$$

This means that $CB=I$, and since $C,B$ are isometries we obtain that $C$ is also onto, so it is unitary, $C=B^*$ and $B$ is unitary as well.

For the converse, let $0<\epsilon$ such that $2\epsilon<l$. 
We will need a lemma:
\begin{lemma}
	\label{leml2}
	Let $f\in L^2(\Omega)$. Then $(f(\alpha_i+u))_{i\in I}\in l^2(I)$ for a.e. $u\in(0,2\epsilon)$. Also, 
	\begin{equation}\label{eql2}
		\sum_{k, \beta_k<\infty}\left|\sum_{i, \alpha_i>-\infty}b_{i,k}f(\alpha_i+u)\right|^2=\sum_{i,\alpha_i>\infty}|f(\alpha_i+u)|^2<\infty.
	\end{equation}
\end{lemma}

\begin{proof}
	We have 
	$$\infty>\sum_{i\in I}\int_{(\alpha_i,\alpha_i+2\epsilon)}|f(x)|^2\,dx=\sum_i\int_{(0,2\epsilon)}|f(\alpha_i+u)|^2\,du$$$$=\int_{(0,2\epsilon)}\sum_i|f(\alpha_i+u)|^2\,du.$$
	Then $\sum_i |f(\alpha_i+u)|^2<\infty$ for a.e $u\in (0,2\epsilon)$.
	
	For \eqref{eql2}, note that the left-hand side is the $l^2$ norm of $B^T(f(\alpha_i+u))_i$. Since $B^T$ is unitary, this is the same as the norm of $(f(\alpha_i+u))_i$.
\end{proof}

For $0\leq t\leq 2\epsilon$, and $f\in L^2(\Omega)$ define 
$$U(t)f(x)=	f(x+t),\text{ if }x\in (\alpha_k,\beta_k-t)\text{ or }\beta_k=\infty\mbox{ for some $k\in I$},$$
$$U(t)f(x)=	\sum_{i,\alpha_i>-\infty}b_{i,k}f(\alpha_i+t-(\beta_k-x)),\text{ if }x\in (\beta_k-t,\beta_k), \beta_k<\infty.
$$
We check that $U(t)$ is isometric.
$$\|U(t)f\|^2=\sum_k\int_{(\alpha_k,\beta_k-t)}|f(x+t)|^2\,dx$$$$+\sum_{k,\beta_k<\infty}\int_{(\beta_k-t,\beta_k)}\left|\sum_{i,\alpha_i>-\infty}b_{i,k}f(\alpha_i+t-(\beta_k-x))\right|^2\,dx$$
Use the substitutions $u=x+t$ in the first integral and $u=t-(\beta_k-x)$ in the second; we obtain further:
$$=\sum_k\int_{(\alpha_k+t,\beta_k)}|f(u)|^2\,du+\sum_{k,\beta_k<\infty}\int_{(0,t)}\left|\sum_{i,\alpha_i>-\infty}b_{i,k}f(\alpha_i+u)\right|^2\,du$$
Now use \eqref{eql2}:
$$=\sum_k\int_{(\alpha_k+t,\beta_k)}|f(u)|^2\,du+\int_{(0,t)}\sum_{i,\alpha_i>-\infty}|f(\alpha_i+u)|^2\,du$$$$=\sum_k\int_{(\alpha_k+t,\beta_k)}|f(x)|^2\,dx+\sum_{k,\alpha_k>-\infty}\int_{(\alpha_k,\alpha_k+t)}|f(x)|^2\,dx=\|f\|^2.$$

Next we prove that, for $0\leq t_1,t_2\leq \epsilon$, $U(t_1)U(t_2)=U(t_1+t_2)$; in particular, these operators commute. Let $f\in L^2(\Omega)$ and $x\in \Omega$. If $x\in(\alpha_k,\beta_k-(t_1+t_2))$, then 
$$U(t_1)U(t_2)f(x)=U(t_2)f(x+t_1)=f(x+t_1+t_2)=U(t_1+t_2)f(x).$$
If $x\in (\beta_k-(t_1+t_2),\beta_k-t_1)$, then 
$$U(t_1)U(t_2)f(x)=U(t_2)f(x+t_1)$$$$=\sum_{i,\alpha_i>-\infty}b_{i,k}f(\alpha_i+t_2-(\beta_k-(x+t_1)))=U(t_1+t_2)f(x).$$
If $x\in(\beta_k-t_1,\beta_k)$, then 
$$U(t_1)U(t_2)f(x)=\sum_{i,\alpha_i>-\infty}b_{i,k}U(t_2)f(\alpha_i+t_1-(\beta_k-x))$$$$=\sum_{i,\alpha_i>-\infty}f(\alpha_i+t_1-(\beta_k-x)+t_2)=U(t_1+t_2)f(x).$$

Define now, for $0<t\leq 2\epsilon$, $f\in L^2(\Omega)$, $x\in\Omega$:

$$U(-t)f(x)=f(x-t),\text{ if }x\in (\alpha_i+t,\beta_i)\text{ or }\alpha_i=-\infty\text{ for some }i\in I,$$
$$U(-t)f(x)=	\sum_{k,\beta_k<\infty}\cj b_{i,k}f(\beta_k-(t-(x-\alpha_i))),\text{ if }x\in (\alpha_i,\alpha_i+t), \alpha_i>-\infty.
$$
We check that $U(t)U(-t)=I$. If $x\in(\alpha_k,\beta_k-t)$ then 
$$U(t)U(-t)f(x)=U(-t)f(x+t)=f(x+t-t)=f(x).$$
If $x\in (\beta_k-t,\beta_k)$, then 
$$U(t)U(-t)f(x)=\sum_{i,\alpha_i>-\infty}b_{i,k}U(-t)f(\alpha_i+t-(\beta_k-x))$$$$
=\sum_{i,\alpha_i>-\infty}b_{i,k}\sum_{l,\beta_l<\infty}\cj b_{i,l}f(\beta_l-(t-(\alpha_i+t-(\beta_k-x)-\alpha_i)))
$$$$=\sum_{l,\beta_l<\infty}\sum_{i,\alpha_i>-\infty}b_{i,k}\cj b_{i,l}f(\beta_l-\beta_k+x)
=\sum_{l,\beta_l<\infty}\delta_{k,l}f(\beta_l-\beta_k+x)=f(x).$$
Thus $U(t)$ is a surjective isometry so it is unitary, and $U(-t)=U(t)^*$.

Now let $0\leq t<\infty$. We can write $t=n\epsilon  +r$ with $n\in\bn\cup\{0\}$ and $0\leq r<\epsilon$. Define 
$$U(t)=U(\epsilon)^nU(r).$$
Clearly $U(t)$ is unitary. We check the group property: let $0\leq t_1,t_2$. Write $t_1=n_1\epsilon+r_1$, $t_2=n_2\epsilon+r_2$, $n_1,n_2\in\bn\cup\{0\}$, $0\leq r_1,r_2<\epsilon$. 

$$U(t_1)U(t_2)=U(\epsilon)^{n_1}U(r_1)U(\epsilon)^{n_2}U(r_2)=U(\epsilon)^{n_1+n_2}U(r_1+r_2).$$
If $r_1+r_2<\epsilon$, then $t_1+t_2=(n_1+n_2)\epsilon+(r_1+r_2)$, and using the group property for numbers $\leq\epsilon$ we get $U(t_1)U(t_2)=U(t_1+t_2)$. If $r_1+r_2\geq \epsilon$ , then $t_1+t_2=(n_1+n_2+1)\epsilon+(r_1+r_2-\epsilon)$ and
$$U(t_1+t_2)=U(\epsilon)^{n_1+n_2}U(\epsilon)U(r_1+r_2-\epsilon)=U(t_1)U(t_2).$$
Define,
$$U(-t)=U(t)^* \text{ for }t>0.$$
It is easy to check that $\{U(t)\}$ satisfies the group property. It is also strongly continuous because $U(t)f\rightarrow f$ in $L^2(\Omega)$ when $t\to 0$, and then one can use the group property and the fact that $U(t)$ are unitary. 

The integrability property and the relations \eqref{eq4.2.0} and \eqref{eq4.2.0b} follow directly from the definition of $\{U(t)\}$.

\end{proof}

\newcommand{\fac}{\leftindex^{\textup{f}\alpha} C}
\newcommand{\fbc}{\leftindex^{\textup{f}\beta} C}
\newcommand{\Expa}[1]{\operatorname*{Exp}(#1\vec\alpha)}
\newcommand{\Expb}[1]{\operatorname*{Exp}(#1\vec\beta)}

What is the relation between the measure $\mu$, multiplicity $m$, matrix field $C(\lambda)$ in Theorem \ref{thi2} and the matrix $B$ in Theorem \ref{th4.1} ? We need some notations.

\begin{definition} 
	\label{defca}
	We denote by $\fac(\lambda)$ the matrix obtained from $C(\lambda)$ by removing the column corresponding to the component that has $\alpha_i=-\infty$ (if any). Similarly, we denote by $\fbc(\lambda)$ the matrix obtained from $C(\lambda)$ by removing the column corresponding to the component that has $\beta_k=\infty$. 
	
	For $\lambda\in\brr$, we denote by $\Expa\lambda$ the diagonal matrix with entries $(e^{2\pi i\lambda \alpha_i})_{i\in I, \alpha_i>-\infty}$. We denote by $\Expb\lambda$ the diagonal matrix with entries $(e^{2\pi i\lambda \beta_k})_{k\in I, \beta_k<\infty}$.
\end{definition}

\begin{theorem}\label{thca}
Assume that $l=\inf_i(\beta_i-\alpha_i)>0$. Suppose $H$ is a self-adjoint extension of the partial differential operator $D$ and let $\{U(t)\}$ be the associated unitary group with the integrability property. Let $\mu, m$ and $C$ be as in Theorem \ref{thi2} and let $B$ be as in Theorem \ref{th4.1}. Then 

\begin{equation}\label{eqca1}
	\fbc(\lambda)\Expb{-\lambda}B^T\Expa{\lambda}=\fac(\lambda),\text{ for $\mu$-a.e. $\lambda$}.
\end{equation}
\end{theorem}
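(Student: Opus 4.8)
The plan is to push the boundary formula \eqref{eq4.2.0} of Theorem \ref{th4.1} through the intertwining relation of Proposition \ref{pri4}, which converts $U(\epsilon)$ into multiplication by $e^{2\pi i\lambda\epsilon}$, and thereby extract a columnwise identity for the matrix field $C(\lambda)$. Throughout, $C_i(\lambda)$ denotes the $i$-th column of $C(\lambda)$. Fix $i\in I$ with $\alpha_i>-\infty$ and $0<\epsilon<l$, and take $f\in L^2(\Omega)$ supported on $[\alpha_i,\alpha_i+\epsilon]$; since $\epsilon<l$ this support lies in the single component $(\alpha_i,\beta_i)$, so only one entry of $Ff$ is nonzero. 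Approximating $f$ by $C_0^\infty$-functions supported in that same component (whose Fourier transforms converge uniformly, exactly as in the proof of Lemma \ref{lemma4}, and with no need for $\Omega$ to have finitely many components) gives, for $\mu$-a.e. $\lambda$,
$$\mathcal{F}_{C}f(\lambda)=\hat f(\lambda)\,C_i(\lambda).$$
Similarly, for $s=\alpha_i+\epsilon-\beta_k$ the translate $T(s)f$ is supported on $[\beta_k-\epsilon,\beta_k]\subset(\alpha_k,\beta_k)$ and $\widehat{T(s)f}(\lambda)=e^{2\pi i\lambda s}\hat f(\lambda)$, so $\mathcal{F}_{C}(T(s)f)(\lambda)=e^{2\pi i\lambda(\alpha_i+\epsilon-\beta_k)}\hat f(\lambda)\,C_k(\lambda)$.

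Next I apply $\mathcal{F}_{C}$ to both sides of \eqref{eq4.2.0}. On the left, Proposition \ref{pri4} gives $\mathcal{F}_{C}U(\epsilon)f(\lambda)=e^{2\pi i\lambda\epsilon}\hat f(\lambda)C_i(\lambda)$. On the right, since $B$ is unitary the summands $b_{i,k}T(\alpha_i+\epsilon-\beta_k)f$ are disjointly supported with $\sum_{k}|b_{i,k}|^2=1$, so the series converges in $L^2(\Omega)$ and $\mathcal{F}_{C}$, being bounded, passes through it; choosing a subsequence of partial sums that converges $\mu$-a.e. I obtain
$$e^{2\pi i\lambda\epsilon}\hat f(\lambda)\,C_i(\lambda)=\sum_{k\in I,\,\beta_k<\infty}b_{i,k}\,e^{2\pi i\lambda(\alpha_i+\epsilon-\beta_k)}\hat f(\lambda)\,C_k(\lambda),\qquad\text{$\mu$-a.e. }\lambda.$$
Cancelling the common factor $e^{2\pi i\lambda\epsilon}$ leaves the same identity with that factor removed.

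The main obstacle is to cancel the scalar $\hat f(\lambda)$ and upgrade this to an identity valid for $\mu$-a.e. $\lambda$, since the exceptional null set depends on $f$ and $\hat f$ vanishes on a discrete set. I resolve this with two test functions: pick lengths $s_1,s_2\in(0,\epsilon)$ with $s_1/s_2$ irrational and take $f=\chi_{[\alpha_i,\alpha_i+s_1]}$ and $f=\chi_{[\alpha_i,\alpha_i+s_2]}$. For $\lambda\neq 0$ one has $\widehat{\chi_{[\alpha_i,\alpha_i+s]}}(\lambda)=0$ only when $\lambda s\in\bz$, so if both vanished at a given $\lambda\neq0$ then $\lambda s_1,\lambda s_2\in\bz$, forcing $s_1/s_2\in\Q$, a contradiction; at $\lambda=0$ both equal $s_j\neq0$. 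Intersecting the two co-null sets, at least one $\hat f(\lambda)\neq0$ for every $\lambda$, and cancelling it yields, for each $i$ with $\alpha_i>-\infty$ and $\mu$-a.e. $\lambda$,
$$C_i(\lambda)=\sum_{k\in I,\,\beta_k<\infty}b_{i,k}\,e^{2\pi i\lambda\alpha_i}\,e^{-2\pi i\lambda\beta_k}\,C_k(\lambda).$$

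Finally I repackage this columnwise relation in matrix form. The vectors $C_i(\lambda)$ with $\alpha_i>-\infty$ are precisely the columns of $\fac(\lambda)$, and the $C_k(\lambda)$ with $\beta_k<\infty$ are the columns of $\fbc(\lambda)$. Writing the coefficient attached to $C_k(\lambda)$ in the $i$-th column as the $(k,i)$ entry
$$e^{-2\pi i\lambda\beta_k}\,b_{i,k}\,e^{2\pi i\lambda\alpha_i}=\bigl(\Expb{-\lambda}\,B^T\,\Expa{\lambda}\bigr)_{k,i}$$
identifies the relation as $\fac(\lambda)=\fbc(\lambda)\,\Expb{-\lambda}\,B^T\,\Expa{\lambda}$, which is \eqref{eqca1}; the block sizes agree, both sides being $m(\lambda)\times\#\{i:\alpha_i>-\infty\}$. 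The companion formula \eqref{eq4.2.0b} for $U(-\epsilon)$ gives the conjugate-transposed version of this identity and serves as a consistency check.
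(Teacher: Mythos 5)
Your proposal is correct and takes essentially the same route as the paper's proof: apply $\mathcal F_C$ to the boundary formula \eqref{eq4.2.0}, use Proposition \ref{pri4} to replace $U(\epsilon)$ by multiplication by $e^{2\pi i\lambda\cdot\epsilon}$, and identify the resulting columnwise relation with the $i$-th columns of $\hat f(\lambda)e^{2\pi i\lambda\epsilon}\fbc(\lambda)\Expb{-\lambda}B^T\Expa{\lambda}$ and $\hat f(\lambda)e^{2\pi i\lambda\epsilon}\fac(\lambda)$. You are in fact more careful than the paper at two points it leaves implicit---justifying the pointwise formula for $\mathcal F_C$ on $L^2$ functions supported at the boundary (where $U(\epsilon)f$ need not be in $C_0^\infty(\Omega)$) by an $L^1\cap L^2$ approximation with an a.e.\ convergent subsequence, and cancelling $\hat f(\lambda)$ on a single co-null set via two characteristic functions whose lengths have irrational ratio, rather than the paper's bare ``for any $f$ as above.''
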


\begin{proof}
	We will use the fact that the operator $\mathcal F_C$ transforms $U(t)$ into multiplication by $e^{2\pi i\lambda\cdot t}$ in $L^2(\mu,m)$.
	
	Take $0<\epsilon<l$, $i\in I$ with $\alpha_i>-\infty$, and let $f$ be a function in $C_0^\infty(\Omega)$ supported on $[\alpha_i,\alpha_i+\epsilon]$. Then, by \eqref{eq4.2.0}, 
	the restriction of $U(\epsilon)f$ to the $k$ component $(\alpha_k,\beta_k)$ is $b_{i,k}T(\alpha_i+\epsilon-\beta_k)f$, if $\beta_k<\infty$, and zero when $\beta_k=\infty$. When $\beta_k<\infty$, its Fourier transform is 
	$$\widehat{(U(\epsilon)f)_k}(\lambda)=b_{i,k}e^{2\pi i \lambda\cdot(\alpha_i+\epsilon-\beta_k)}\hat f(\lambda).$$
	Let $\mathfrak a(x)=1$ if $x$ is finite, and $\mathfrak a(x)=0$ if $x$ is infinite, i.e., $x=\pm\infty$. We have: 
	$$\mathcal F_CU(\epsilon)f(\lambda)=C(\lambda)\left(\mathfrak a(\beta_k)b_{i,k}e^{2\pi i\lambda\cdot(\alpha_i+\epsilon-\beta_k)}\hat f(\lambda)\right)_{k\in I }.
	$$
This is the $i$-th column of the matrix $$\hat f(\lambda)e^{2\pi i\lambda\cdot\epsilon}\fbc(\lambda)\Expb{-\lambda}B^T\Expa\lambda.$$ 
	On the other hand, 
	$$e^{2\pi i\lambda\cdot \epsilon}\mathcal F_Cf(\lambda)=e^{2\pi i\lambda\cdot\epsilon}C(\lambda)\begin{pmatrix}
		0\\ \vdots \\ \underset{\text{$i$-th position}}{
			\hat f(\lambda)}\\ \vdots\\0
	\end{pmatrix}$$ 
		This is the $i$-th column of $\hat f(\lambda)e^{2\pi i\lambda\cdot\epsilon}\fac(\lambda)$.
		
		But $\mathcal F_CU(\epsilon)f(\lambda)=e^{2\pi i\lambda\cdot \epsilon}\mathcal F_Cf(\lambda)$ for $\mu$-a.e. $\lambda$ and any $f$ as above, therefore we get  \eqref{eqca1}.
	
\end{proof}

\begin{corollary}
	\label{cor4.4}
	Suppose $\Omega$ is a finite union of intervals, exactly one of which is unbounded. Then there are no self-adjoint extensions of the partial differential operator $D$ on $\Omega$, and there are no unitary groups with the integrability property on $\Omega$. 
\end{corollary}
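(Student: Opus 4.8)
The plan is to derive a contradiction from the mere existence of the unitary boundary matrix $B$ supplied by Theorem \ref{th4.1}, via a counting argument on finite versus infinite endpoints. By Theorem \ref{thai} (in the one-dimensional case), the existence of a self-adjoint extension $H$ of $D$ is equivalent to the existence of a strongly continuous unitary group $\{U(t)\}$ on $L^2(\Omega)$ with the integrability property. It therefore suffices to rule out such a group, so I would assume for contradiction that $\{U(t)\}$ is one.

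First I would check that the hypothesis \eqref{eq4.1.1} of Theorem \ref{th4.1} is automatic here. Since $\Omega$ is a \emph{finite} union of intervals, each bounded interval has strictly positive length while the single unbounded interval contributes length $+\infty$; hence $l=\inf_i(\beta_i-\alpha_i)$ is the minimum of finitely many positive numbers and is positive. Thus Theorem \ref{th4.1} applies and produces a \emph{unitary} matrix $B=(b_{i,k})$ whose rows are indexed by the intervals with $\alpha_i>-\infty$ and whose columns are indexed by the intervals with $\beta_k<\infty$.

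The key step is the dimension count. Write $n=|I|$ for the total number of intervals. The unique unbounded interval must be half-infinite, since a two-sided infinite component would force $\Omega=\br$, which is connected and excluded from the present setting. If the unbounded interval has the form $(\alpha_{i_0},\infty)$, then every interval has a finite left endpoint, so $B$ has $n$ rows, whereas exactly one interval (the $i_0$-th) has $\beta=\infty$, so $B$ has $n-1$ columns. A unitary matrix must be square, giving $n=n-1$, a contradiction. The symmetric possibility, where the unbounded interval is $(-\infty,\beta_{k_0})$, produces an $(n-1)\times n$ matrix and the same contradiction. Hence no unitary $B$ can exist, so no unitary group with the integrability property exists, and by Theorem \ref{thai} the operator $D$ admits no self-adjoint extensions on $\Omega$.

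The only delicate point, and the one I would flag explicitly rather than ``grind through,'' is the bookkeeping that separates the genuine half-line cases from the degenerate case $\Omega=\br$: one must observe that a single two-sided infinite component forces $\Omega$ to be all of $\br$ (and hence connected, with the classical momentum operator as a self-adjoint extension), so that this case is correctly excluded; in every remaining configuration the asymmetry between the number of finite left endpoints and the number of finite right endpoints makes $B$ non-square, which is exactly the obstruction. No further computation is needed once the indexing of $B$ in Theorem \ref{th4.1} is read off carefully.
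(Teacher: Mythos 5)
Your proof is correct and follows essentially the same route as the paper's own argument: invoke Theorem \ref{th4.1} (via Theorem \ref{thai}) to obtain the unitary boundary matrix $B$, count the finite left endpoints against the finite right endpoints ($N-1$ versus $N$, or the mirror case), and conclude from the fact that a finite unitary matrix must be square. Your two extra observations---that $l=\inf_i(\beta_i-\alpha_i)>0$ is automatic for finitely many intervals, and that a doubly infinite component would force $\Omega=\br$ (where $D$ \emph{does} have a self-adjoint extension, so that degenerate case must be set aside)---are sensible bookkeeping that the paper leaves implicit by simply assuming the unbounded interval has the form $(-\infty,a)$.
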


\begin{proof}
	Assume the unbounded interval of $\Omega$ is $(-\infty,a)$ and the other ones are finite, and the total number of intervals is $N$. 
	
	If, by contradiction, we do have a self-adjoint extension then we do have a unitary group $\{U(t)\}$ with the integrability property. Then, by Theorem \ref{th4.1}, we have a unitary matrix with rows indexed by the finite left-endpoints and columns indexed by the finite right-endpoints. There are $N-1$ finite left-endpoints and $N$ finite right-endpoints. But a unitary matrix must be square, a contradiction. 
\end{proof}

\begin{remark}\label{rem4.4}
	If we allow $\Omega$ to have infinitely many intervals then $\Omega$ can have an unbounded interval and be even spectral (so it does have self-adjoint extensions of the differential operator $D$). Indeed take $\Omega=(-\infty,1)\cup\cup_{n\in\bn}(n,n+1)$. This set is equal to $\brr$ up to measure zero and therefore it is spectral. 
	\end{remark}

	\begin{theorem}\label{th4.5}
	Let $\Omega$ be an unbounded open set in $\mathbb{R}$ with finitely many components. If $\Omega$ is spectral, then $\Omega$ has no gaps between the intervals, so $\Omega$ is equal to $\mathbb{R}$ minus the endpoints of the intervals.
\end{theorem}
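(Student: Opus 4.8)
The plan is to combine the boundary-matrix description of Theorem \ref{th4.1}, its interaction with the spectral data recorded in Theorem \ref{thca}, and an analyticity argument that becomes available once I show the pair measure $\mu$ has no atoms.

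First I would record the structural consequences of the hypotheses. Since $\Omega$ is spectral, Theorem \ref{thi5} produces a group of local translations; hence $\Omega$ has the integrability property and, by Theorem \ref{thai}, $D$ admits self-adjoint extensions $H$. By Corollary \ref{cor4.4}, $\Omega$ cannot have exactly one unbounded component, so as an unbounded subset of $\mathbb{R}$ with finitely many components it has exactly two unbounded ones, of the form $(-\infty,\beta_{k_0})$ and $(\alpha_{i_0},\infty)$. Writing $L=\{i:\alpha_i>-\infty\}$ and $R=\{k:\beta_k<\infty\}$, both have cardinality $n-1$ (with $n$ the number of components), so the boundary matrix $B=(b_{i,k})_{i\in L,k\in R}$ of Theorem \ref{th4.1} is square. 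Note $l=\inf_i(\beta_i-\alpha_i)>0$ automatically, there being finitely many components of positive length.

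Next I would extract a functional equation. By Theorem \ref{thi5}(ii) I may take $m\equiv 1$ and $C(\lambda)=\mathbf 1$ (the $1\times p(\Omega)$ row of ones), with $\mu$ the pair measure; then $\fac(\lambda)$ and $\fbc(\lambda)$ are rows of ones, and substituting $C=\mathbf 1$ into \eqref{eqca1} and reading off the $i$-th entry gives, for each $i\in L$,
\begin{equation*}
	\sum_{k\in R} b_{i,k}\,e^{2\pi i\lambda(\alpha_i-\beta_k)}=1,\qquad\text{for $\mu$-a.e.\ }\lambda.
\end{equation*}
The main step, and the one I expect to be the real obstacle, is to upgrade ``$\mu$-a.e.'' to ``everywhere,'' which requires knowing that $\supp\mu$ has an accumulation point. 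I would do this by showing $\mu$ has no atoms: if $\mu(\{\lambda_0\})=c>0$, surjectivity of $\mathscr F=\mathcal F_C$ yields $f\in L^2(\Omega)$ with $\mathscr F f=\chi_{\{\lambda_0\}}$, and testing against $\varphi_R:=e_{\lambda_0}\chi_{\Omega\cap(-R,R)}\in L^1(\Omega)\cap L^2(\Omega)$ gives $c\,\mathfrak m(\Omega\cap(-R,R))=|\langle f,\varphi_R\rangle|\le\|f\|\,\mathfrak m(\Omega\cap(-R,R))^{1/2}$, forcing $c=0$ because $\mathfrak m(\Omega)=\infty$. Hence $\mu$ is non-atomic, so $\supp\mu$ has no isolated points (an isolated point of the support would be an atom) and every one of its points is an accumulation point. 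Since the left-hand side above is an entire function of $\lambda$ vanishing on the closed full-measure set $\supp\mu$, it vanishes on a set with an accumulation point, and the identity theorem gives $\sum_{k\in R}b_{i,k}e^{2\pi i\lambda(\alpha_i-\beta_k)}=1$ for all $\lambda\in\mathbb{R}$.

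Finally I would read off the geometry. The exponents $\alpha_i-\beta_k$ ($k\in R$) are distinct and the characters $\lambda\mapsto e^{2\pi i\lambda\gamma}$ are linearly independent, so matching the constant $1=e^{0}$ forces a unique $k=\sigma(i)\in R$ with $\beta_{\sigma(i)}=\alpha_i$ and $b_{i,\sigma(i)}=1$, all other entries of that row vanishing. Thus every row of the square unitary matrix $B$ is a standard basis vector, so $B$ is a permutation matrix and $\sigma\colon L\to R$ is a bijection with $\alpha_i=\beta_{\sigma(i)}$. Consequently every finite left endpoint coincides with a finite right endpoint, and conversely: if some gap $(\beta_k,\alpha_i)$ of positive length existed, then $\beta_k\in R$ would equal $\alpha_{\sigma^{-1}(k)}$, so an interval of $\Omega$ would begin at $\beta_k$, contradicting the gap. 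Hence $\Omega$ has no gaps and equals $\mathbb{R}$ minus the finitely many common endpoints.
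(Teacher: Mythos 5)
Your proof is correct, and it takes a genuinely different route from the paper's. The paper argues by transport of supports: after merging components that share an endpoint, it takes $\varphi$ supported on $(a,a+\epsilon)$ just inside the unbounded component $(a,\infty)$, uses Theorem \ref{th4.1} and Lemma \ref{lem2.8} to conclude that $U(\epsilon)\varphi$ lives in $(-\infty,\beta]$ with $\beta<a$ the largest finite right endpoint, and then computes $U(t+\epsilon)\varphi$ for large $t$ in two ways --- directly via the local-translation property (support in $[a-t-\epsilon,a-t]$) and as $U(t)U(\epsilon)\varphi$ (support in $(-\infty,\beta-t]$) --- obtaining disjoint supports for $\epsilon<a-\beta$, a contradiction. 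Your argument is instead algebraic--analytic: specializing \eqref{eqca1} of Theorem \ref{thca} to the data $(\mu,1,\mathbf 1)$ furnished by Theorem \ref{thi5} correctly yields $\sum_{k\in R}b_{i,k}e^{2\pi i\lambda(\alpha_i-\beta_k)}=1$ for $\mu$-a.e.\ $\lambda$ (the dimension bookkeeping checks out, and the finitely many null sets over $i\in L$ are harmless); your non-atomicity step is sound --- it is in effect the observation that an atom of the pair measure would put $e_{\lambda_0}$ in $L^2(\Omega)$, impossible since $\mathfrak m(\Omega)=\infty$, and your Cauchy--Schwarz version of this is quantitative and complete --- and the remark that an isolated point of $\supp\mu$ would be an atom correctly gives finite accumulation points, so the identity theorem applies to the entire exponential sum and linear independence of characters forces each row of $B$ to be a standard basis vector with $\beta_{\sigma(i)}=\alpha_i$; orthonormality of the rows (or distinctness of the $\alpha_i$) makes $\sigma$ a bijection, and the final reading-off of the geometry is right, since a gap immediately to the right of a finite right endpoint $\beta_k$ is incompatible with a component starting at $\beta_k=\alpha_{\sigma^{-1}(k)}$. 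What your route buys beyond the paper's: it is quantitative and proves strictly more --- $B$ is exactly the permutation matrix gluing adjacent components, so the group of local translations is the honest translation group of $\mathbb{R}$ restricted to $\Omega$ --- and it handles shared endpoints natively, whereas the paper must first reduce by merging components with a common endpoint. What the paper's route buys: it is shorter and softer, using only the support lemmas and no spectral data $\mu$, $m$, $C$, no non-atomicity, and no analyticity.
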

\begin{proof}
	Assume $\Omega$ is spectral. By Corollary \ref{cor4.4}, since $\Omega$ is unbounded and has finitely many components, it must contain exactly two unbounded components, which we denote $(a,\infty)$ and $(-\infty,c)$. 
	
	We can assume by contradiction that all gaps between the components of $\Omega$ are positive, i.e., no common end points for the components of $\Omega$, otherwise we join the components with a common endpoint into a single component and the new set $\Omega'$ will be spectral as well, since it differs from $\Omega$ by just a measure zero set. 
	
	Let $\{U(t)\}$ be the group of local translations (by Theorem \ref{thi5}).	Let $\beta$ be the largest right-endpoint less than $a$. Pick a small $\epsilon>0$. Let $\varphi\in C_{0}^{\infty}(\Omega)$ be supported on $(a,a+\epsilon)$. If $\epsilon$ is smaller than the smallest length of the components of $\Omega$, by Theorem \ref{th4.1}, 		
	$U(\epsilon)\varphi$ has support outside $(a,\infty)$ and therefore it is supported in $(-\infty,\beta].$ 
	
	Take $t$ large such that $a-t<c$, then, $(a,a+\epsilon)-(t+\epsilon)\subseteq(-\infty,c)\subseteq\Omega$ and by Lemma \ref{lem2.8}, $U(t+\epsilon)\varphi$ is supported on $[a-t-\epsilon,a-t].$

	On the other hand, $U(\epsilon)\varphi$ is supported on $(-\infty,\beta].$ Take $t$ large so that $\beta-t<c$, i.e., $t>\beta-c.$ Then, with Lemma \ref{lem2.8}, $U(t+\epsilon)\varphi=U(t)(U(\epsilon)\varphi)$ is supported on $(-\infty, \beta-t].$	But for $\epsilon<a-\beta,$ $\beta-t<a-t-\epsilon$ so the two intervals of support are disjoint and  we get a contradiction.  The contradiction shows that we cannot have gaps between intervals, and therefore $\Omega$ is $\brr$ minus the endpoints of the intervals. 
	
\end{proof}

\section{Discrete subgroups of $\brr^d$}\label{sec5}
In this section we will focus on the case when the set $\Omega$ tiles $\brr^d$ with a discrete subgroup $\mathcal T$ of $\brr^d$, and we will show in Theorem \ref{th5.9} that this is equivalent to $\Omega$ having a certain pair measure supported on the dual set $\mathcal T^*$ (Definition \ref{defdu}). In Proposition \ref{prut}, we give a formula for the associated group of local translations in this situation. Our results generalize Fuglede's original result for {\it full-rank} lattices:
\begin{theorem}\cite{Fug74}	\label{thfula}
	Let $\Omega$ be a finite measure set in $\brr^d$, and $A$ a $d\times d$ invertible real matrix.  
	$\Omega$ tiles with the lattice $\mathcal T=A\bz^d$ if and only if $\Omega$ is spectral with spectrum the dual lattice $\mathcal T^*=(A^T)^{-1}\bz^d$.
\end{theorem}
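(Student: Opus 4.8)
The plan is to reduce everything to Fourier analysis on the torus $\br^d/\mathcal T$, exploiting that $\mathcal T$ and $\mathcal T^*$ are dual full-rank lattices: $\lambda\cdot t\in\bz$ whenever $t\in\mathcal T$ and $\lambda\in\mathcal T^*$, and $\covol(\mathcal T)=|\det A|=\covol(\mathcal T^*)^{-1}$. Fix a measurable fundamental domain $F$ for $\mathcal T$, so that $\mathfrak m(F)=\covol(\mathcal T)$. The central object is the periodization $\Phi(x)=\sum_{t\in\mathcal T}\chi_\Omega(x+t)$, a $\mathcal T$-periodic, nonnegative, integer-valued function with $\int_F\Phi=\mathfrak m(\Omega)$. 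The first thing I would establish is the unfolding identity: for $f\in L^2(\Omega)$ extended by zero and $\lambda\in\mathcal T^*$,
\[
\langle f,e_\lambda\rangle_{L^2(\Omega)}=\int_F\Big(\sum_{t\in\mathcal T}f(x+t)\Big)e^{-2\pi i\lambda\cdot x}\,dx ,
\]
which uses $\lambda\cdot t\in\bz$ to reassemble the integral over $\br^d$; in particular the $\lambda$-th Fourier coefficient of $\Phi$ with respect to $\mathcal T$ equals $\covol(\mathcal T)^{-1}\hat\chi_\Omega(\lambda)$.

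From this I would record the equivalence that drives both directions. Orthogonality of $\{e_\lambda:\lambda\in\mathcal T^*\}$ in $L^2(\Omega)$ means $\langle e_\lambda,e_{\lambda'}\rangle=\hat\chi_\Omega(\lambda'-\lambda)=0$ for all $\lambda\neq\lambda'$ in $\mathcal T^*$, i.e.\ $\hat\chi_\Omega$ vanishes on $\mathcal T^*\setminus\{0\}$; by the coefficient computation this is exactly the statement that $\Phi$ equals a constant $c$ a.e. Since $\Phi$ is integer-valued and $\int_F\Phi=\mathfrak m(\Omega)$, such a $c=\mathfrak m(\Omega)/\covol(\mathcal T)$ must be a nonnegative integer.

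For the forward direction, tiling by $\mathcal T$ is precisely $\Phi\equiv 1$ a.e., that is, $\Omega$ is a fundamental domain for $\mathcal T$. Orthogonality then follows from the equivalence above, and completeness follows by transporting the classical fact that $\{e_\lambda:\lambda\in\mathcal T^*\}$ is a complete orthogonal system in $L^2(\br^d/\mathcal T)$ through the isometry $L^2(\Omega)\cong L^2(\br^d/\mathcal T)$ given by periodic extension. Hence $\Omega$ is spectral with spectrum $\mathcal T^*$.

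For the converse, assume $\{e_\lambda:\lambda\in\mathcal T^*\}$ is an orthogonal basis of $L^2(\Omega)$. Orthogonality already gives $\Phi\equiv c$ with $c$ a positive integer, and the crux---which I expect to be the \emph{main obstacle}---is to use completeness to force $c=1$, i.e.\ to rule out that the translates cover $\br^d$ with multiplicity $c\ge 2$. I would do this by a localized Parseval comparison: choose $A\subset\Omega$ of positive measure and diameter smaller than the length of the shortest nonzero vector of $\mathcal T$, so that the periodization $P\chi_A=\sum_{t\in\mathcal T}\chi_A(\cdot+t)$ takes only the values $0$ and $1$. Parseval for the orthonormal basis $\{e_\lambda/\sqrt{\mathfrak m(\Omega)}\}$ of $L^2(\Omega)$ gives $\sum_{\lambda\in\mathcal T^*}|\hat\chi_A(\lambda)|^2=\mathfrak m(\Omega)\,\mathfrak m(A)$, while Parseval on $L^2(F)$ together with the unfolding identity and $P\chi_A\in\{0,1\}$ gives $\sum_{\lambda\in\mathcal T^*}|\hat\chi_A(\lambda)|^2=\covol(\mathcal T)\,\mathfrak m(A)$. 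Comparing yields $\mathfrak m(\Omega)=\covol(\mathcal T)$, hence $c=1$, so $\Phi\equiv 1$ and $\Omega$ tiles by $\mathcal T$. The only routine points to verify are the interchange of sum and integral in the unfolding step (justified by monotone convergence since $\chi_\Omega\ge 0$) and the existence of a small positive-measure $A$ (cover $\br^d$ by countably many sets of small diameter; one meets $\Omega$ in positive measure).
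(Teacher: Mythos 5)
Your proposal is correct, but note that the paper itself does not prove Theorem \ref{thfula}: it is quoted from Fuglede's 1974 paper and used as an ingredient in the proof of Theorem \ref{th5.9}, so there is no internal proof to compare against. Your argument is the classical periodization route, and all the steps check out: the unfolding identity is valid because $L^2(\Omega)\subset L^1(\Omega)$ for finite-measure $\Omega$ and $e_\lambda$ is $\mathcal T$-periodic for $\lambda\in\mathcal T^*$; orthogonality is correctly identified with the vanishing of $\hat\chi_\Omega$ on $\mathcal T^*\setminus\{0\}$, hence (by uniqueness of Fourier coefficients for the $L^1$ function $\Phi$ on the torus $\br^d/\mathcal T$) with $\Phi\equiv c$, an integer; and in the forward direction the periodization map is indeed unitary from $L^2(\Omega)$ onto $L^2(F)$ because tiling makes $Pf$ a single term a.e., so completeness transports from the classical Fourier system on the torus. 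The part you rightly flagged as the crux---using completeness to rule out $c\geq 2$---is handled cleanly: for $A\subset\Omega$ of positive measure and diameter below the minimal nonzero lattice vector length, $P\chi_A\in\{0,1\}$, so Parseval in $L^2(\Omega)$ gives $\sum_{\lambda\in\mathcal T^*}|\hat\chi_A(\lambda)|^2=\mathfrak m(\Omega)\,\mathfrak m(A)$ while Parseval in $L^2(F)$ gives $\covol(\mathcal T)\,\mathfrak m(A)$, forcing $\mathfrak m(\Omega)=\covol(\mathcal T)$ and $c=1$; this localized comparison is a nice self-contained device, and the existence of such an $A$ and the sum--integral interchanges are justified exactly as you say. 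The only implicit assumptions are harmless ones ($\mathfrak m(\Omega)>0$, without which both sides of the equivalence degenerate), so the proof stands as a complete substitute for the citation.
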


We begin by reviewing some necessary preliminaries. It is well known that any discrete subgroup $\mathcal T$ of $\brr^d$ is isomorphic to $\bz^{d_1}$ for some $d_1\in\bn\cup\{0\}$, which is the dimension of the linear span of $\mathcal T$. Explicitly, there exists linearly independent generators $a^{(1)},\dots,a^{(d_1)}\in\brr^d$ such that the mapping 
$$\bz^{d_1}\ni (n_1,\dots, n_{d_1})\mapsto \sum_{j=1}^{d_1} n_j a^{(j)}\in\brr^d$$
	is a group isomorphism of $\bz^{d_1}$ onto $\mathcal T$. 
	
	One can complete the linearly independent set $\{a^{(1)},\dots, a^{(d_1)}\}$ to a basis $\{a^{(1)},\dots,a^{(d)}\}$ for $\brr^d$. Put the vectors $a^{(j)}$ as columns in a nonsingular real matrix $A$, and then $\mathcal T=A(\bz^{d_1}\times\{0\})$ (here $0$ means that all the coordinates from $d_1+1$ to $d$ are zero). 

The next lemmas are well known and easy to prove:
\begin{lemma}\label{lemti}
	$\Omega$ tiles $\brr^d$ with a set $\mathcal T$ if and only if
	$$\sum_{t\in\mathcal T}\chi_{\Omega}(x-t)=1\mbox{ $\mathfrak m$-a.e. }$$
\end{lemma}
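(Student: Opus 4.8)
The plan is to rewrite the counting function in terms of translates of $\Omega$ and then read off the two defining properties of a tiling. The key identity I would start from is
$$\chi_\Omega(x-t)=\chi_{\Omega+t}(x),\quad(x\in\br^d,\ t\in\mathcal T),$$
which holds because $x-t\in\Omega$ if and only if $x\in\Omega+t$. Consequently the function $S(x):=\sum_{t\in\mathcal T}\chi_\Omega(x-t)=\sum_{t\in\mathcal T}\chi_{\Omega+t}(x)$ is exactly the number of translates $\Omega+t$, $t\in\mathcal T$, that contain the point $x$. Since $\mathcal T$ is a discrete subgroup of $\br^d$ it is countable, so all the measure-theoretic manipulations below involve only countable unions.

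For the forward implication, I would assume that $\{\Omega+t:t\in\mathcal T\}$ is a partition of $\br^d$ up to measure zero (the definition of tiling, Definition \ref{defsp}) and split this into a covering and a packing condition. Covering says $\br^d\setminus\bigcup_{t\in\mathcal T}(\Omega+t)$ is a null set, which gives $S(x)\geq1$ for a.e.\ $x$. Packing says $\mathfrak m((\Omega+t)\cap(\Omega+t'))=0$ whenever $t\neq t'$; since the set $\{S\geq2\}$ is contained in the countable union $\bigcup_{t\neq t'}(\Omega+t)\cap(\Omega+t')$, it is a null set, so $S(x)\leq1$ for a.e.\ $x$. Combining the two inequalities yields $S=1$ a.e.

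For the converse, I would assume $S=1$ a.e.\ and recover the partition. The inequality $S\geq1$ a.e.\ means that a.e.\ point lies in some translate, so $\bigcup_{t\in\mathcal T}(\Omega+t)=\br^d$ up to measure zero. The inequality $S\leq1$ a.e.\ means the overlap set $\{S\geq2\}$ is null; since every pairwise intersection $(\Omega+t)\cap(\Omega+t')$ with $t\neq t'$ is contained in $\{S\geq2\}$, each such intersection is null, i.e.\ the translates are pairwise disjoint up to measure zero. Together these say $\{\Omega+t:t\in\mathcal T\}$ is a partition up to measure zero, which is exactly $\Omega$ tiling by $\mathcal T$.

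Since the whole argument is just a dictionary between the pointwise value of the counting function and the covering/packing conditions, there is no genuine obstacle; the only point requiring a little care is the countability of $\mathcal T$, which is what guarantees that the uncovered set and the union of pairwise overlaps are countable unions of null sets, hence null.
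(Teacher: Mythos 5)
Your proof is correct, and since the paper states this lemma without proof (introducing it as ``well known and easy to prove''), your covering-plus-packing dictionary via the counting function $S(x)=\sum_{t\in\mathcal T}\chi_{\Omega+t}(x)$ is exactly the standard argument the authors are implicitly invoking. The one hypothesis you use beyond the lemma's literal statement --- countability of $\mathcal T$, which you derive from discreteness --- is harmless: Section \ref{sec5} only applies the lemma to discrete subgroups, and in any case once $\mathfrak m(\Omega)>0$ the pairwise essential disjointness of the translates already forces $\mathcal T$ to be countable, by $\sigma$-finiteness of Lebesgue measure.
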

\begin{lemma}
	\label{lemat1}
	Let $\Omega$ be a measurable subset of $\brr^d$, $\mathcal T$ a subset of $\brr^d$, and $A$ be an invertible real $d\times d$ matrix.
	$\Omega$ tiles with $\mathcal T$ if and only if $A^{-1}\Omega$ tiles with $A^{-1}\mathcal T$. 
\end{lemma}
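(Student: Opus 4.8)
The plan is to reduce both tiling statements to the pointwise criterion of Lemma \ref{lemti} and then transport everything across the invertible linear change of variables $x = Ay$. Since the definition of tiling is symmetric in $\mathcal T$ and $A^{-1}\mathcal T$ once we have the equivalence in one direction, it suffices to prove a single chain of equivalences, and the invertibility of $A$ lets us run it backwards.

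First I would record the elementary identity that for any measurable $\Omega$ and any $y\in\br^d$,
$$\chi_{A^{-1}\Omega}(y)=\chi_\Omega(Ay),$$
which is immediate from $y\in A^{-1}\Omega\iff Ay\in\Omega$. Next I would compute the tiling sum for the pair $(A^{-1}\Omega,A^{-1}\mathcal T)$. Writing a generic element of $A^{-1}\mathcal T$ as $A^{-1}t$ with $t\in\mathcal T$ and applying the identity above,
$$\sum_{t\in\mathcal T}\chi_{A^{-1}\Omega}(y-A^{-1}t)=\sum_{t\in\mathcal T}\chi_\Omega\bigl(A(y-A^{-1}t)\bigr)=\sum_{t\in\mathcal T}\chi_\Omega(Ay-t).$$
Thus, as a function of $y$, the left-hand side equals $f(Ay)$, where $f(x):=\sum_{t\in\mathcal T}\chi_\Omega(x-t)$ is precisely the tiling sum for the pair $(\Omega,\mathcal T)$.

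By Lemma \ref{lemti}, $A^{-1}\Omega$ tiles with $A^{-1}\mathcal T$ if and only if $f(Ay)=1$ for $\mathfrak m$-a.e.\ $y$, whereas $\Omega$ tiles with $\mathcal T$ if and only if $f(x)=1$ for $\mathfrak m$-a.e.\ $x$. So the whole lemma comes down to the claim that $f\circ A=1$ a.e.\ is equivalent to $f=1$ a.e. This is the only step that needs a word of justification, and it is where I would be careful rather than where I expect genuine difficulty: it holds because $A$ and $A^{-1}$ map Lebesgue-null sets to Lebesgue-null sets (equivalently $\mathfrak m(AE)=\lvert\det A\rvert\,\mathfrak m(E)$ since $A$ is invertible), so the exceptional set $N:=\{x:f(x)\neq1\}$ is null if and only if its preimage $A^{-1}N=\{y:f(Ay)\neq1\}$ is null. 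There is no real obstacle here; the argument is entirely a bookkeeping of the change of variables, and the same computation read in reverse (applying the already-proved direction to $A^{-1}\Omega$ and the matrix $A^{-1}$) gives the converse implication, completing the proof.
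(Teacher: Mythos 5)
Your proof is correct. Note that the paper itself gives no proof of this lemma --- it is dismissed as ``well known and easy to prove'' --- so there is no argument of the authors' to compare against; your write-up is a valid completion, and the two points you isolate (the conjugation identity $\sum_{t\in\mathcal T}\chi_{A^{-1}\Omega}(y-A^{-1}t)=f(Ay)$ and the fact that invertible linear maps preserve Lebesgue-null sets) are exactly the content. One remark: routing through Lemma \ref{lemti} is slightly more machinery than needed. Working straight from Definition \ref{defsp}, one has $A^{-1}(\Omega+t)=A^{-1}\Omega+A^{-1}t$, and since $A^{-1}$ is a bijection of $\br^d$ carrying null sets to null sets, the family $\{\Omega+t : t\in\mathcal T\}$ is a partition of $\br^d$ up to measure zero if and only if $\{A^{-1}\Omega+s : s\in A^{-1}\mathcal T\}$ is; this one-line argument also sidesteps any worry about interpreting the sum in Lemma \ref{lemti} when $\mathcal T$ is an arbitrary (a priori possibly uncountable) subset, whereas your version inherits whatever hypotheses that lemma tacitly carries. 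Either way, the null-set-preservation step you flagged is indeed the only point requiring justification, and you handle it correctly via $\mathfrak m(AE)=\lvert\det A\rvert\,\mathfrak m(E)$.
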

The next lemma is also well known:
\begin{lemma}
	\label{lemti2}
	If $\Omega$ tiles $\brr^d$ with $\bz^d$ (or has spectrum $\bz^d$) then $ \mathfrak m_d(\Omega)=1$.
\end{lemma}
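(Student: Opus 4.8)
The plan is to treat the two hypotheses separately, both reducing to the behavior of the periodization $P(x):=\sum_{t\in\bz^d}\chi_\Omega(x-t)$. Throughout, note that $\Omega$ has finite measure: in the spectral case this is forced because the exponentials $e_n$, $n\in\bz^d$, must lie in $L^2(\Omega)$, while in the tiling case it follows from Lemma \ref{lemti}. Consequently $P\in L^1([0,1)^d)$ with $\int_{[0,1)^d}P\,dx=\mathfrak m_d(\Omega)<\infty$, so $P$ is finite a.e.; being a sum of indicator functions, $P$ is also integer-valued a.e.

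For the tiling hypothesis the argument is immediate. By Lemma \ref{lemti}, $P\equiv 1$ a.e. Integrating over the fundamental domain $[0,1)^d$ and using Tonelli's theorem to interchange the (nonnegative) sum and integral,
\begin{equation*}
1=\int_{[0,1)^d}P(x)\,dx=\sum_{t\in\bz^d}\int_{[0,1)^d}\chi_\Omega(x-t)\,dx=\sum_{t\in\bz^d}\mathfrak m_d\bigl(\Omega\cap([0,1)^d-t)\bigr)=\mathfrak m_d(\Omega),
\end{equation*}
since the translates $[0,1)^d-t$ partition $\br^d$ up to measure zero.

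For the spectral hypothesis, first exploit orthogonality: for $n\in\bz^d\setminus\{0\}$ one has $0=\langle e_0,e_n\rangle=\int_\Omega e^{-2\pi i n\cdot x}\,dx=\hat\chi_\Omega(n)$, while $\hat\chi_\Omega(0)=\mathfrak m_d(\Omega)$. Since the Fourier coefficients of the $\bz^d$-periodic function $P$ are exactly $\hat P(n)=\hat\chi_\Omega(n)$, uniqueness of Fourier series gives $P=\mathfrak m_d(\Omega)$ a.e. As $P$ is integer-valued, $N:=\mathfrak m_d(\Omega)$ is a nonnegative integer, and $N\geq 1$ because $\Omega$ has positive measure. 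It remains to rule out $N\geq 2$, and this is where completeness enters. Suppose $N\geq 2$. Writing $\tau(x)$ for the fractional part of $x$, the map $\tau|_\Omega$ is $N$-to-$1$ a.e. onto $[0,1)^d$; fixing an enumeration of $\bz^d$ one obtains measurable ``sheets'' $A_1,A_2$ inside $\Omega$ so that almost every fiber of $\tau|_\Omega$ meets both $A_1$ and $A_2$. Put $f=\chi_{A_1}-\chi_{A_2}\in L^2(\Omega)$, which is nonzero. For every $n\in\bz^d$, since $e^{2\pi i n\cdot t}=1$ for $t\in\bz^d$, the function $e_n$ is constant on each fiber of $\tau|_\Omega$; hence $\langle f,e_n\rangle=\int_\Omega f(x)e^{-2\pi i n\cdot x}\,dx$ splits fiberwise into the cancelling $\pm1$ contributions of $A_1$ and $A_2$ and vanishes. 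Thus $f\perp e_n$ for all $n$, contradicting completeness of $\{e_n\}$. Hence $N=1$.

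The main obstacle is this completeness step: one must produce an explicit nonzero element of $L^2(\Omega)$ orthogonal to every $e_n$ when $N\geq 2$. The conceptual content is that each $e_n$ is constant along the fibers of the $N$-to-$1$ covering $\tau|_\Omega$, so the closed span of $\{e_n\}$ consists only of fiber-constant functions; the one genuinely technical point is the measurable selection of the sheets $A_1,A_2$, which is routine. Equivalently, one may phrase the obstruction via $\langle f,e_n\rangle=\widehat{f\chi_\Omega}(n)$, so that $f\perp\{e_n\}$ for all $n$ holds if and only if the periodization of $f\chi_\Omega$ vanishes a.e., and such nonzero $f$ exist precisely when the fibers contain more than one point.
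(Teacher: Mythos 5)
Your proof is correct, but it routes differently from the paper's, most visibly on the spectral half. The paper's proof of Lemma \ref{lemti2} is purely a tiling argument: it partitions the unit cube $Q$ into pieces $Q_k=\{x\in Q: x-k\in\Omega\}$ and observes that the translates $Q_k-k$ partition $\Omega$, so $\mathfrak m_d(\Omega)=\mathfrak m_d(Q)=1$; the parenthetical spectral case is left to follow from the quoted Fuglede lattice theorem (Theorem \ref{thfula}), which converts spectrum $\bz^d$ into tiling by $\bz^d$. Your tiling half is the same counting in different clothing: integrating the periodization $P\equiv 1$ over a fundamental domain is exactly the paper's cut-and-translate bookkeeping, via Lemma \ref{lemti}. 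Your spectral half, however, is a genuinely self-contained argument that does not invoke Theorem \ref{thfula}: orthogonality forces $\hat\chi_\Omega$ to vanish on $\bz^d\setminus\{0\}$, hence $P\equiv N:=\mathfrak m_d(\Omega)$ a.e.\ by uniqueness of Fourier series, and completeness kills $N\geq 2$ via the fiber-constancy of the $e_n$ and the sheet function $\chi_{A_1}-\chi_{A_2}$. In effect you have reproved the relevant direction of Fuglede's lattice theorem for $\bz^d$; what this buys is independence from the citation (useful if one wants the lemma before, or without, Theorem \ref{thfula}), at the cost of the measurable selection of sheets, which, as you say, is routine (enumerate $\bz^d$ and take the first and second points of each fiber). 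All the delicate points check out: $e_0\in L^2(\Omega)$ forces $\mathfrak m_d(\Omega)<\infty$ so $P\in L^1$ of the torus; $\hat P(n)=\hat\chi_\Omega(n)$ by unfolding; $A_1$ has measure $1>0$ so $f\neq 0$; and $\langle f,e_n\rangle$ vanishes because the periodization of $f$ is $1-1=0$ a.e., with Fubini justified since $f\in L^1(\Omega)$. One cosmetic remark: your claim $N\geq 1$ tacitly uses that a set of measure zero cannot have spectrum $\bz^d$ (the $e_n$ would all be the zero vector), which is implicit in Definition \ref{defsp} but worth a half-sentence.
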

\begin{proof}
	Let $Q$ be the unit cube in $\brr^d$. For each $x\in Q$, up to measure zero, there exists unique $y(x)\in\Omega$ and $k(x)\in \bz^d$ such that $x=y(x)+k(x)$. Define 
	$$Q_k:=\{x\in Q: k(x)=k\},\quad(k\in\bz^d).$$
	Then the sets $\{Q_k\}$ form a partition of $Q$ and the sets $\{Q_k-k\}$ form a partition of $\Omega$ (this can be seen from the fact that the map $Q\ni x\mapsto y(x)\in\Omega$ is a bijection, up to measure zero, which in turn is a consequence of the fact that both $\Omega$ and $Q$ tile by $\bz^d$). Since $Q_k$ and $Q_k-k$ have the same measure, it follows that $\Omega$ has the same measure as $Q$ which is 1. 
\end{proof}

\begin{definition}
	Let $A$ be an invertible real $d\times d$ matrix and $\mu$ a Borel measure on $\brr^d$. We define the measure $A^T\mu$ by $A^T\mu(E)=|\det(A^T)^{-1}|\mu(A^TE)$ for any Borel subset $E$ in $\brr^d$. Equivalently, for any compactly supported continuous function $f$ on $\brr^d$:
	\begin{equation}
		\label{eqdil1}
		\int f\, dA^T\mu=|\det(A^T)^{-1}|\int f((A^T)^{-1}x)\,d\mu(x).
	\end{equation}
	
	Note that if the measure $\mu$ is supported on a set $\Lambda$, then $A^T\mu$ is supported on $(A^T)^{-1}\Lambda$. 
\end{definition}
\begin{lemma}
	\label{lemas}
	If $(\Omega,\mu)$ is a spectral pair, then $(A\Omega,A^T\mu)$ is a spectral pair.
\end{lemma}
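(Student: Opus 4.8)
The plan is to transport the spectral pair structure from $\Omega$ to $A\Omega$ through the linear change of variables induced by $A$, tracking the Jacobian factors throughout. First I would set up the correspondence at the level of functions: for $g$ on $A\Omega$, define $f(x):=g(Ax)$ for $x\in\Omega$. The substitution $y=Ax$ shows that $g\mapsto f$ is a bijection of $L^1(A\Omega)\cap L^2(A\Omega)$ onto $L^1(\Omega)\cap L^2(\Omega)$, with $\norm{g}_{L^2(A\Omega)}^2=\abs{\det A}\,\norm{f}_{L^2(\Omega)}^2$.

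Next I would carry out the key computation relating the two Fourier transforms. Applying $y=Ax$ in the definition of $\hat g$ gives
$$\hat g(\xi)=\int_{A\Omega}g(y)e^{-2\pi i\xi\cdot y}\,dy=\abs{\det A}\int_\Omega f(x)e^{-2\pi i(A^T\xi)\cdot x}\,dx=\abs{\det A}\,\hat f(A^T\xi).$$
Motivated by this, I would introduce the operator $V\colon L^2(\mu)\to L^2(A^T\mu)$, $(Vh)(\xi)=\abs{\det A}\,h(A^T\xi)$, which by construction satisfies $\hat g=V\hat f$. Using the change-of-variables formula \eqref{eqdil1} for $A^T\mu$ together with $\abs{\det(A^T)^{-1}}=\abs{\det A}^{-1}$, a short calculation yields $\norm{Vh}_{L^2(A^T\mu)}^2=\abs{\det A}\,\norm{h}_{L^2(\mu)}^2$; hence $\abs{\det A}^{-1/2}V$ is an isometric isomorphism of $L^2(\mu)$ onto $L^2(A^T\mu)$, its inverse being $k\mapsto\abs{\det A}^{-1}k((A^T)^{-1}\,\cdot\,)$.

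These two ingredients would then deliver both spectral-pair conditions for $(A\Omega,A^T\mu)$. For the isometry, condition (1) of Definition \ref{defp1} follows from the finiteness of $\int\abs{\hat f}^2\,d\mu$, while
$$\int\abs{\hat g}^2\,dA^T\mu=\norm{V\hat f}_{L^2(A^T\mu)}^2=\abs{\det A}\int\abs{\hat f}^2\,d\mu=\abs{\det A}\,\norm{f}_{L^2(\Omega)}^2=\norm{g}_{L^2(A\Omega)}^2,$$
where the third equality uses that $(\Omega,\mu)$ is a spectral pair. For the dense range, since $g\mapsto f$ is onto $L^1(\Omega)\cap L^2(\Omega)$, the set of all $\hat g$ equals $V$ applied to the set of all $\hat f$, i.e.\ the image under the bounded invertible operator $V$ of a dense subspace of $L^2(\mu)$, and is therefore dense in $L^2(A^T\mu)$.

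I expect the only delicate point to be the bookkeeping of the determinant factors: one must ensure that the spatial Jacobian $\abs{\det A}$, the factor $\abs{\det A}$ appearing in $\hat g=\abs{\det A}\,\hat f(A^T\cdot)$, and the density $\abs{\det(A^T)^{-1}}=\abs{\det A}^{-1}$ built into $A^T\mu$ combine so that all the scaling constants cancel and $g\mapsto\hat g$ comes out genuinely isometric. Everything else is a routine change of variables.
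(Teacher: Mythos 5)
Your proof is correct and takes essentially the same approach as the paper's, which uses the same change of variables $y=Ax$ but packages the determinant factors into two normalized unitaries $\mathcal A f(x)=\sqrt{\abs{\det A^{-1}}}f(A^{-1}x)$ and $\mathcal A'g(x)=\sqrt{\abs{\det A^T}}g(A^Tx)$ satisfying the intertwining relation $\mathscr F\mathcal A=\mathcal A'\mathscr F$, whereas you track the scalars explicitly through the unnormalized $V$. The only cosmetic slip is that the stated inverse $k\mapsto\abs{\det A}^{-1}k((A^T)^{-1}\,\cdot\,)$ inverts $V$ itself rather than the isometry $\abs{\det A}^{-1/2}V$ (whose inverse is $k\mapsto\abs{\det A}^{-1/2}k((A^T)^{-1}\,\cdot\,)$), which is harmless for the argument.
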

\begin{proof}
	Define the operator $\mathcal A: L^2(\Omega)\to L^2(A\Omega)$ by 
	$$\mathcal A f(x)=\sqrt{|\det A^{-1}|}f(A^{-1}x),\quad(x\in A\Omega, f\in L^2(\Omega)).$$
	The operator $\mathcal A$ is unitary. 
	
	Define also the operator $\mathcal A': L^2(\mu)\to L^2(A^T\mu)$ by
	$$\mathcal A'g(x)=\sqrt{|\det A^T|}g(A^Tx),\quad(x\in \brr^d, g\in L^2(\mu)).$$
	We check that $\mathcal A'$ is unitary:
	$$\|\mathcal A'g\|^2=|\det A^T|\int|g(A^Tx)|^2\,dA^T\mu$$$$=|\det A^T||\det (A^T)^{-1}|\int |g(A^T(A^T)^{-1}x)|^2\,d\mu=\int|g|^2\,d\mu.$$
	
	The Fourier transform of $\mathcal Af$, for $f\in L^2(\Omega)$ is 
	$$\mathscr F\mathcal Af(t)=\sqrt{|\det A^T|}\mathscr Ff(A^Tt),\quad (t\in\brr^d).$$
	This means that 
	$$\mathscr F\mathcal A=\mathcal A'\mathscr F.$$
	Since $\mathscr F$ from $L^2(\Omega)$ to $L^2(\mu)$ is unitary, and the operators $\mathcal A$ and $\mathcal A'$ are unitary, it follows that $\mathscr F$ from $L^2(A\Omega)$ to $L^2(A^T\mu)$ is also unitary, and therefore $(A^T\Omega,A^T\mu)$ is a spectral pair. 
\end{proof}

\begin{definition}
	\label{defdu}
	For each discrete subgroup $\mathcal T$ of $\brr^d$ one can define the dual set
	$$\mathcal T^*:=\{\gamma^*\in\brr^d: \gamma^*\cdot \gamma\in\bz \text{ for all }\gamma\in\mathcal T\}.$$
	If $\mathcal T=A(\bz^{d_1}\times\{0\})$ for some invertible real $d\times d$ matrix $A$, and $d=d_1+d_2$, then $\mathcal T^*=(A^T)^{-1}(\bz^{d_1}\times\brr^{d_2})$. Note that the measure $A^T(\delta_{\bz^{d_1}}\times \mathfrak m_{d_2})$ is supported on $\mathcal T^*$. Recall that $\delta_{\bz^{d_1}}$ is the counting measure on the subset $\bz^{d_1}$ of $\brr^{d_1}$.
\end{definition}

\begin{theorem}\label{th5.9}
 The set $\Omega$ tiles with the discrete subgroup $\mathcal T=A(\mathbb{Z}^{d_{1}}\times\{0\})$ if and only if $\Omega$ has pair measure $A^T(\delta_{\mathbb{Z}^{d_{1}}}\times \mathfrak m_{d_{2}})$ which is supported on the dual set $\mathcal T^*$.
\end{theorem}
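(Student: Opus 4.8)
The plan is to reduce the general statement to the normalized case $A=I$, where $\mathcal T=\bz^{d_1}\times\{0\}$ and the candidate pair measure is $\mu_0:=\delta_{\bz^{d_1}}\times\mathfrak m_{d_2}$, and then to analyze that case by a ``folding'' argument. For the reduction, write $\Omega_0:=A^{-1}\Omega$. By Lemma \ref{lemat1}, $\Omega$ tiles with $\mathcal T=A(\bz^{d_1}\times\{0\})$ if and only if $\Omega_0$ tiles with $A^{-1}\mathcal T=\bz^{d_1}\times\{0\}$. On the spectral side, Lemma \ref{lemas} shows that $(\Omega_0,\mu_0)$ is a spectral pair if and only if $(\Omega,A^T\mu_0)$ is: one implication is Lemma \ref{lemas} applied to $A$, and the converse is Lemma \ref{lemas} applied to $A^{-1}$, since a direct check of the definition gives that the assignment $\mu\mapsto A^T\mu$ satisfies $(A^{-1})^T(A^T\mu)=\mu$. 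Since $A^T\mu_0=A^T(\delta_{\bz^{d_1}}\times\mathfrak m_{d_2})$ is exactly the measure in the statement (and is supported on $\mathcal T^*$ by Definition \ref{defdu}), it suffices to prove: $\Omega_0$ tiles with $\bz^{d_1}\times\{0\}$ if and only if $(\Omega_0,\mu_0)$ is a spectral pair.

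For the base case I would split coordinates as $(x,y)$ with $x\in\br^{d_1}$, $y\in\br^{d_2}$, so that $\mu_0$ combines counting measure on $\bz^{d_1}$ in the $x$-frequencies with Lebesgue measure on $\br^{d_2}$ in the $y$-frequencies; the transform $\mathscr F f(n,\xi)=\hat f(n,\xi)$ is then Fourier series in $x$ followed by the Fourier transform in $y$. The key tool is the mixed Parseval--Plancherel isomorphism $\mathcal P\colon L^2([0,1)^{d_1}\times\br^{d_2})\to L^2(\mu_0)$ sending $g$ to $(n,\xi)\mapsto\int_{[0,1)^{d_1}}\int_{\br^{d_2}}g(u,y)e^{-2\pi i(n\cdot u+\xi\cdot y)}\,dy\,du$, which is unitary. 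I would then connect $\mathscr F$ to $\mathcal P$ by folding $\Omega_0$ into the strip $[0,1)^{d_1}\times\br^{d_2}$: whenever the $\bz^{d_1}$-translates of $\Omega_0$ are essentially disjoint (packing), the map $\tau(x,y)=(x\bmod\bz^{d_1},y)$ is an a.e.\ injective, measure-preserving map of $\Omega_0$ onto a set $S\subseteq[0,1)^{d_1}\times\br^{d_2}$; writing $\tilde f=f\circ\tau^{-1}$ (extended by zero off $S$) and using $e^{-2\pi in\cdot k}=1$ for $n,k\in\bz^{d_1}$, a change of variables gives the identity $\hat f(n,\xi)=\widehat{\tilde f}(n,\xi)$, i.e.\ $\mathscr F=\mathcal P\circ(f\mapsto\tilde f)$. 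In the tiling case, Lemma \ref{lemti} makes $\tau$ an a.e.\ bijection onto the full strip, so $f\mapsto\tilde f$ is unitary onto $L^2([0,1)^{d_1}\times\br^{d_2})$ and $\mathscr F=\mathcal P\circ(f\mapsto\tilde f)$ is unitary; hence $(\Omega_0,\mu_0)$ is a spectral pair.

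For the converse I would extract the tiling condition from the two defining properties of a spectral pair separately. Isometry yields packing: expanding $\|\mathscr F f\|_{L^2(\mu_0)}^2$ by Poisson summation and Parseval in $x$, and $\|f\|_{L^2(\Omega_0)}^2$ by Plancherel in $y$, the equality $\|\mathscr F f\|^2=\|f\|^2$ reduces to $\int_{[0,1)^{d_1}}\sum_{k\neq k'}\int_{\br^{d_2}}f(x-k,y)\overline{f(x-k',y)}\,dy\,dx=0$ for all $f$; taking $f=\chi_E$ with $E\subseteq\Omega_0$ of finite measure makes the integrand nonnegative, forcing for a.e.\ $(x,y)$ at most one $k\in\bz^{d_1}$ with $(x-k,y)\in E$, and letting $E\uparrow\Omega_0$ gives packing. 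Dense range then yields covering: by packing the factorization $\mathscr F=\mathcal P\circ(f\mapsto\tilde f)$ is valid with image space $L^2(S)$, so the range of $\mathscr F$ equals $\mathcal P(L^2(S))$; since $L^2(S)$ is a closed subspace of $L^2([0,1)^{d_1}\times\br^{d_2})$ and $\mathcal P$ is unitary, density of the range forces $L^2(S)$ to be the whole space, i.e.\ $S=[0,1)^{d_1}\times\br^{d_2}$ up to measure zero, which combined with injectivity of $\tau$ makes $\tau$ an a.e.\ bijection---exactly the tiling condition. I expect the main obstacle to be making this ``folding'' bridge fully rigorous: verifying that $\tau$ is measure preserving and that $\tilde f$ is a well-defined $L^2$ function under only the packing hypothesis (working slice-by-slice in $y$), and justifying the interchange of summation and integration in the Poisson/Parseval computation for general $f\in L^1(\Omega_0)\cap L^2(\Omega_0)$, which is where the finite-measure truncations $E$ do the real work.
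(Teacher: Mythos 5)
Your proof is correct, but it takes a genuinely different route from the paper. The paper slices $\Omega_0=A^{-1}\Omega$ along the second variable, shows that for a.e.\ $x_2$ the fiber $\Omega_{x_2}\subset\br^{d_1}$ tiles by $\bz^{d_1}$, and then invokes Fuglede's full-rank lattice theorem (Theorem \ref{thfula}) fiberwise to get that each slice has spectrum $\bz^{d_1}$; the isometry of $\mathscr F$ then follows from fiberwise Parseval plus Plancherel in $x_2$, density from the transforms of the functions $e_n\otimes f$, and the converse from applying the isometry identity to functions localized in $x_2$ on small balls $B_\epsilon(a)$ together with the Lebesgue Differentiation Theorem, and then Theorem \ref{thfula} again. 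You instead never invoke Theorem \ref{thfula}: after the same reduction to $A=I$ (your inversion check $(A^{-1})^T(A^T\mu)=\mu$ is exactly what the paper's terse reduction also uses), you fold $\Omega_0$ onto the group $[0,1)^{d_1}\times\br^{d_2}$ and factor $\mathscr F=\mathcal P\circ(f\mapsto\tilde f)$ through the mixed Fourier-series/Fourier-transform unitary $\mathcal P$, then cleanly separate the two spectral-pair axioms: isometry forces packing (via nonnegativity of the cross terms for $f=\chi_E$, $E$ of finite measure), and dense range forces covering (since $\mathcal P(L^2(S))$ closed and dense forces $S$ to be the full strip). What each approach buys: the paper's fiberwise argument is short given the known lattice theorem and additionally yields the slice-level information that a.e.\ $\Omega_{x_2}$ is spectral with spectrum $\bz^{d_1}$; yours is self-contained and in fact reproves Theorem \ref{thfula} as the special case $d_2=0$, while avoiding the Lebesgue differentiation step entirely. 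The technical points you flag are the right ones and are all standard: the folding map is a countable union of translations (hence measure-preserving, with $S$ measurable and $f\mapsto\tilde f$ unitary onto $L^2(S)$ under packing), Tonelli handles the cross-term expansion for $f=\chi_E\geq 0$, and the Parseval step for the periodization $F$, which is only known to be in $L^1$ a priori, is covered by the standard uniqueness/Plancherel fact for LCA groups that an $L^1$ function on $\bt^{d_1}\times\br^{d_2}$ whose Fourier transform is square-integrable on the dual group lies in $L^2$ with equality of norms; that last fact deserves an explicit citation in a written-up version.
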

\begin{proof}
	$\Omega$ tiles with $A(\mathbb{Z}^{d_{1}}\times\{0\})$ iff $A^{-1}\Omega$ tiles with $\mathbb{Z}^{d_{1}}\times\{0\}$.
	$A^{-1}\Omega$ has a pair measure $\delta_{\mathbb{Z}^{d_{1}}}\times \mathfrak m_{d_{2}}$ iff $\Omega$ has pair measure $A^T(\delta_{\mathbb{Z}^{d_{1}}}\times \mathfrak m_{d_{2}}).$ So, considering $A^{-1}\Omega$ instead, it is enough to show that a set $\Omega$ tiles with $\mathbb{Z}^{d_{1}}\times \{0\}$ iff $\Omega$ has pair measure $\delta_{\mathbb{Z}^{d_{1}}}\times \mathfrak m_{d_{2}}$.

	For $x\in \brr^d$ and $d=d_1+d_2$, we write $x\in\brr^d$ as $x=(x_1,x_2)$ with $x_1\in\brr^{d_1}$ and $x_2\in\brr^{d_2}$.

	Assume $\Omega$ tiles with $\mathbb{Z}^{d_{1}}\times\{0\}$. Let $$\Omega_{x_{2}}=\{x_{1}\in \mathbb{R}^{d_1}:(x_{1},x_{2})\in \Omega\},\quad(x_2\in\brr^{d_2}).$$
	Then for almost every $x_{2}\in\mathbb{R}^{d_{2}},$ $\Omega_{x_{2}}$ tiles $\brr^{d_{1}}$ with $\mathbb{Z}^{d_{1}}.$ Indeed, with Lemma \ref{lemti}, we have 
	$$\sum_{k\in\bz^{d_1}}\chi_\Omega(x_1-k,x_2)=1, \mbox{ $\mathfrak m_{d}$-a.e.}$$
	This implies that, for $\mathfrak m_{d_2}$-a.e. $x_2$, $$\sum_{k\in\bz_{d_1}}\chi_{\Omega_{x_2}}(x_1-k)=1,\mbox{ $\mathfrak m_{d_1}$-a.e.,}$$
	and this means that for $\mathfrak m_{d_2}$-a.e. $x_2$, the set $\Omega_{x_2}$ tiles $\brr^{d_1}$ by $\bz^{d_1}$.
	Then, by Theorem \ref{thfula}, it follows that $\Omega_{x_{2}}$ has spectrum $\mathbb{Z}^{d_{1}}$ for $\mathfrak m_{d_2}$-a.e. $x_{2}.$
	With Lemma \ref{lemti2} we also obtain that $\mathfrak m_{d_2}(\Omega_{x_2})=1$.

	Let $\varphi\in L^{1}(\Omega)\cap L^{2}(\Omega).$ Define, $$\varphi_{x_{2}}(x_{1})=\varphi(x_{1},x_{2}).$$
	We have for $(\lambda_{1},\lambda_{2})\in\mathbb{R}^{d_{1}}\times\mathbb{R}^{d_{2}},$
	$$\hat{\varphi}(\lambda_{1},\lambda_{2})=\int_{\mathbb{R}^{d_2}}\int_{\Omega_{x_{2}}}\varphi(x_{1},x_{2})e^{-2\pi i\lambda_{1}\cdot x_{1}}e^{-2\pi i\lambda_{2}\cdot x_{2}}dx_{1}dx_{2}$$$$=\int_{\mathbb{R}^{d_2}}\hat{\varphi}_{x_{2}}(\lambda_{1})e^{-2\pi i\lambda_{2}\cdot x_{2}}dx_{2}.$$
	Define the function $$\phi_{\lambda_{1}}(x_{2})=\hat{\varphi}_{x_{2}}(\lambda_{1}),\quad(\lambda_1\in\brr^{d_1}).$$
	Then, the previous equality means that $$\hat{\varphi}(\lambda_{1},\lambda_{2})=\hat{\phi}_{\lambda_{1}}(\lambda_{2}),\quad (\lambda_1\in\brr^{d_1},\lambda_2\in\brr^{d_2}).$$
	We have 
	$$\int\int|\varphi(x_{1},x_{2})|^{2}dx_{1}dx_{2}=\int_{\mathbb{R}^{d_{2}}}\int_{\Omega_{x_{2}}}|\varphi_{x_{2}}(x_{1})|^{2}dx_{1}dx_{2}$$$$=\int_{\mathbb{R}^{d_{2}}}\sum_{n_{1}\in\mathbb{Z}^{d_{1}}}|\hat{\varphi}_{x_{2}}(n_{1})|^{2}dx_{2},\text{ (as $\Omega_{x_{2}}$ has spectrum $\mathbb{Z}^{d_{1}}$) }$$
	$$=\sum_{n_{1}\in\mathbb{Z}^{d_{1}}}\int_{\mathbb{R}^{d_{2}}}|\hat{\varphi}_{x_{2}}(n_{1})|^{2}dx_{2}=\sum_{n_{1}\in\mathbb{Z}^{d_{1}}}\int_{\mathbb{R}^{d_{2}}}|\phi_{n_{1}}(x_{2})|^{2}dx_{2}$$$$=\sum_{n_{1}\in\mathbb{Z}^{d_{1}}}\int_{\mathbb{R}^{d_{2}}}|\hat{\phi}_{n_{1}}(\lambda_{2})|^{2}d\lambda_{2} \mbox{ (by Plancherel's identity)}$$
	$$=\sum_{n_{1}\in\mathbb{Z}^{d_{1}}}\int_{\mathbb{R}^{d_{2}}}|\hat{\varphi}(n_{1},\lambda_{2})|^{2}d\lambda_{2}=\int_{\mathbb{R}^{d_{1}}\times \mathbb{R}^{d_{2}}}|\hat{\varphi}(x_{1},\lambda_{2})|^{2}d(\delta_{\mathbb{Z}^{d_{1}}}\times \mathfrak m_{d_{2}}(\lambda_2)).$$
	So the Fourier transform is isometric from $L^{2}(\Omega)$ to $L^{2}(\delta_{\mathbb{Z}^{d_{1}}}\times \mathfrak m_{d_{2}}).$
	
	Consider the functions $$e_{n}\otimes f(x_{1},x_{2})=e^{2\pi in\cdot x_{1}}f(x_{2}),$$ 
	where $(x_{1},x_{2})\in \Omega$, $n\in \mathbb{Z}^{d_{1}},$ $f\in L^{1}(\mathbb{R}^{d_{2}})\cap L^{2}(\mathbb{R}^{d_{2}}).$
	
	Then, for $n_1\in\bz^{d_1}$ and $\lambda_2\in\brr^{d_2}$, 
	$$\widehat{e_{n}\otimes f}(n_{1},\lambda_{2})=\int_{\mathbb{R}}\int_{\Omega_{x_{2}}} e^{2\pi in\cdot x_{1}}e^{-2\pi in_1\cdot x_1}dx_{1}\cdot f(x_{2})e^{-2\pi i\lambda_{2}\cdot x_{2}}dx_{2}$$ $$=\int_{\mathbb{R}}\delta_{n,n_{1}}f(x_{2})e^{-2\pi i \lambda_{2}\cdot x_{2}}dx_{2}\text{ ($\Omega_{x_{2}}$ has spectrum $\mathbb{Z}^{d_{1}}$ and with Lemma \ref{lemti2})}$$$$=\delta_{n,n_{1}}\hat{f}(\lambda_{2}).$$
	Thus $\delta_{n,n_{1}}\hat{f}$ is in the range of the Fourier transform $\mathscr F$. Since $\{\hat{f}:f\in L^{1}(\mathbb{R}^{d_{2}})\cap L^{2}(\mathbb{R}^{d_{2}})\}$ is dense in $L^2(\brr^{d_2})$, the range of the transformation is dense in $L^2(\delta_{\bz^{d_1}}\times \mathfrak m_{d_2})$, and therefore it is onto; this proves that $\delta_{\bz^{d_1}}\times \mathfrak m_{d_2}$ is a pair measure for $\Omega$.
	
	For the converse, assume that the set $\Omega$ has pair measure $\delta_{\mathbb{Z}^{d_{1}}}\times \mathfrak m_{d_{2}}.$ We claim that for almost every $x_{2}\in \mathbb{R}^{d_{2}},$ $\Omega_{x_{2}}$ has spectrum $\mathbb{Z}^{d_{1}}.$
	
	Let $\varphi\in L^2(\Omega)\cap L^1(\Omega)$. We have 
	$$\int_{\mathbb{R}^{d_{2}}}\int_{\Omega_{x_{2}}}|\varphi(x_{1},x_{2})|^{2}dx_{1}d{x_{2}}$$$$
	=\sum_{n_{1}\in\mathbb{Z}^{d_{1}}}\int_{\mathbb{R}^{d_{2}}}|\hat{\varphi}(n_{1},\lambda_{2})|^{2}d\lambda_{2}\text{ (since $\delta_{\bz^{d_1}}\times \mathfrak m_{d_2}$ is a pair measure)}
	$$$$=\sum_{n_{1}\in\mathbb{Z}^{d_{1}}}\int_{\mathbb{R}^{d_{2}}}|\hat{\phi}_{n_{1}}(\lambda_{2})|^{2}d\lambda_{2}
	=\sum_{n_{1}\in\mathbb{Z}^{d_{1}}}\int_{\mathbb{R}^{d_{2}}}|\phi_{n_{1}}(\lambda_{2})|^{2}d\lambda_{2}\text{ (Plancherel)}$$
	$$
	=\sum_{n_{1}\in\mathbb{Z}^{d_{1}}}\int_{\mathbb{R}^{d_{2}}}|\hat{\varphi}_{x_{2}}(n_{1})|^{2}dx_{2}	=\int_{\mathbb{R}^{d_{2}}}\sum_{n_{1}\in\mathbb{Z}^{d_{1}}}|\hat{\varphi}_{x_2}(n_{1})|dx_2.
	$$
	Thus 
	\begin{equation}
		\label{*}
		\int_{\mathbb{R}^{d_{2}}}\int_{\Omega_{x_{2}}}|\varphi(x_{1},x_{2})|^{2}dx_{1}d_{x_{2}}
		=\int_{\mathbb{R}^{d_{2}}}\sum_{n_{1}\in\mathbb{Z}^{d_{1}}}|\hat{\varphi}_{x_2}(n_{1})|dx_2.
	\end{equation}

	For $\epsilon>0$ and $a\in\brr^{d_2}$, let $B_\epsilon(a)$ be the ball in $\brr^{d_2}$ centered at $a$ and of radius $\epsilon$. Consider the function:
	$$\tilde{\varphi}_{a,\epsilon}(x_{1},x_2)=\frac{1}{\mathfrak m_{d_2}(B_\epsilon(a))}\chi_{B_\epsilon(a)}(x_{2})\varphi_{x_{2}}(x_{1}),\quad (x_1\in\brr^{d_1},x_2\in\brr^{d_2}).$$
	
	We will apply the relation \eqref{*} to the function $\tilde \varphi_{a,\epsilon}$. We compute 
	$$\widehat{(\tilde{\varphi}_{a,\epsilon})_{x_2}}(n_{1})=\frac{1}{\mathfrak m_{d_2}(B_\epsilon(a))}\chi_{B_\epsilon(a)}(x_{2})\int_{\Omega_{x_{2}}}\varphi_{x_{2}}(x_{1})e^{-2\pi in_{1}\cdot x_{1}}dx_{1}$$$$=\frac{1}{\mathfrak m_{d_2}(B_\epsilon(a))}\chi_{B_\epsilon(a)}(x_{2})\hat{\varphi}_{x_{2}}(n_{1}).$$
	From the relation \eqref{*} we obtain 
	$$\frac{1}{\mathfrak m_{d_2}(B_\epsilon(a))}\int_{B_\epsilon(a)}\int_{\Omega_{x_{2}}}|\varphi_{x_{2}}(x_{1})|^{2}dx_{1}dx_{2}$$$$=\frac{1}{\mathfrak m_{d_2}(B_\epsilon(a))}\int_{B_\epsilon(a)}\sum_{n_{1}\in \mathbb{Z}^{d_{1}}}|\hat{\varphi}_{x_{2}}(n_{1})|^{2}dx_{2}$$
	Using the Lebesgue Differentiation Theorem, let $\epsilon\to 0$ and obtain that 
	$$\int_{\Omega_{a}}|\varphi_{a}(x_{1})|^{2}dx_{1}=\sum_{n_{1}\in \mathbb{Z}^{d_{1}}}|\hat{\varphi}_{a}(n_{1})|^{2}\text{ for a.e $a\in \mathbb{R}^{d_{2}}$}.$$
	This means that $\Omega_a$ has spectrum $\mathbb{Z}^{d_{1}}$ and so it tiles by $\mathbb{Z}^{d_{1}}$ for $\mathfrak m_{d_2}$-a.e. $a$. Then (using Lemma \ref{lemti} as before) $\Omega$ tiles by $\mathbb{Z}^{d_{1}}\times\{0\}.$
\end{proof}

Theorem \ref{th5.9} shows that if a set $\Omega$ tiles with a discrete subgroup then it is also a spectral set. By Theorem \ref{thi5}, there must be an associated unitary group $\{U(t)\}$ of local translations. The next proposition presents an explicit formula for this unitary group. 

\begin{proposition}\label{prut}
	Suppose $\Omega$ tiles $\brr^d$ with a discrete subgroup $\mathcal T$. For almost every $x\in\brr^d$ there are unique $y(x)\in\Omega$ and $\gamma(x)\in\mathcal T$ such that $x=y(x)+\gamma(x)$. Define the operators $\{U(t)\}$ on $L^2(\Omega)$ by
	$$U(t)f(x)=f(y(x+t))=f(x+t-\gamma(x+t)),\quad (t\in\brr^d, x\in\Omega, f\in L^2(\Omega)). $$ 
	
	Then $\{U(t)\}$ is a strongly continuous unitary group of local translations. Its Fourier transform is a multiplication by $e^{2\pi i\gamma^*\cdot t}$ on the dual set $\mathcal T^*$:
	\begin{equation}
		\label{equla}
		\mathscr FU(t)f(\gamma^*)=e^{2\pi i\gamma^*\cdot t}\mathscr F(\gamma^*),\quad(\gamma^*\in\mathcal T^*).
	\end{equation}
	
\end{proposition}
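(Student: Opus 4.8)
The plan is to establish the three assertions—that $\{U(t)\}$ is a strongly continuous unitary group, that it consists of local translations, and the Fourier formula \eqref{equla}—in turn, all resting on a single bookkeeping device coming from the tiling. Since $\Omega$ tiles by $\mathcal T$, the "folding" map $y(\cdot)=\cdot-\gamma(\cdot)$ is $\mathcal T$-periodic and satisfies $y(w)=w$ for a.e.\ $w\in\Omega$; moreover, for each fixed $t$ the sets $\{(\Omega+t-\gamma)\cap\Omega:\gamma\in\mathcal T\}$ form a partition of $\Omega$ up to measure zero, because $\{(\Omega+t)+\gamma\}_{\gamma\in\mathcal T}$ is a translate of the tiling partition (here we use $-\mathcal T=\mathcal T$). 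I would record these facts first, invoking Lemma \ref{lemti}.

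First I would show each $U(t)$ is a well-defined isometry. Writing $\int_\Omega|f(y(x+t))|^2\,dx$ and substituting $u=x+t$ turns this into $\int_{\Omega+t}|f(y(u))|^2\,du$; splitting $\Omega+t$ according to the value $\gamma(u)=\gamma$ (so that $y(u)=u-\gamma$ there) and substituting $v=u-\gamma$ on each piece converts the integral into $\sum_{\gamma\in\mathcal T}\int_{(\Omega+t-\gamma)\cap\Omega}|f(v)|^2\,dv=\|f\|^2$ by the partition fact. The group law $U(t)U(s)=U(t+s)$ follows from $\mathcal T$-periodicity of $y$: since $y(x+t)$ differs from $x+t$ by an element of $\mathcal T$, one gets $y(y(x+t)+s)=y(x+t+s)$; together with $U(0)=I$ (as $y(x)=x$ for $x\in\Omega$) this yields $U(t)U(-t)=I$, so each isometry $U(t)$ is onto, hence unitary.

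The local translation property is then immediate from the definition: if $V\subseteq\Omega$ is open and $V+t\subseteq\Omega$, then for $x\in V$ the point $x+t$ lies in $\Omega$, so $\gamma(x+t)=0$ and $y(x+t)=x+t$ for a.e.\ such $x$; thus $U(t)f(x)=f(x+t)$, which is exactly Definition \ref{deflt}. For the Fourier formula I would compute on the dense subspace $L^1(\Omega)\cap L^2(\Omega)$, where $\mathscr Ff=\hat f$. Repeating the same substitution on $\widehat{U(t)f}(\gamma^*)=\int_\Omega f(y(x+t))e^{-2\pi i\gamma^*\cdot x}\,dx$ produces a factor $e^{2\pi i\gamma^*\cdot t}$ from $x=u-t$ and, on the piece indexed by $\gamma$, a factor $e^{-2\pi i\gamma^*\cdot\gamma}$; the decisive point is that for $\gamma^*\in\mathcal T^*$ one has $\gamma^*\cdot\gamma\in\bz$, so this factor is $1$, and the partition fact collapses the sum back to $\hat f(\gamma^*)$. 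This gives \eqref{equla} on $L^1\cap L^2$, and hence, by continuity of $\mathscr F$ and of the unitaries $U(t)$, on all of $L^2(\Omega)$. Finally, strong continuity follows by transporting to $L^2(\mu)$: formula \eqref{equla} says $\mathscr FU(t)=\hat U(t)\mathscr F$ with $\hat U(t)$ multiplication by $e^{2\pi i\lambda\cdot t}$, and $\|\hat U(t)g-g\|^2=\int|e^{2\pi i\lambda\cdot t}-1|^2|g|^2\,d\mu\to0$ by dominated convergence, so $U(t)f\to f$ as $t\to0$.

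The only genuine obstacle is the measure-theoretic care demanded by the folding substitutions: one must check that $y$ is measurable, that the decomposition of $\Omega+t$ according to $\gamma(u)$ is a bona fide partition up to measure zero, and that interchanging the sum over $\mathcal T$ with the integral is legitimate (Tonelli for the nonnegative integrand in the isometry step, and absolute convergence for the Fourier step). Everything else is a direct, if careful, change of variables.
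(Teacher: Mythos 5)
Your proof is correct and follows essentially the same route as the paper: the same partition of $\Omega$ (equivalently of $\Omega+t$) according to the value of $\gamma(\cdot+t)$, the same change-of-variables computations for the isometry and for \eqref{equla}, and the same use of $\gamma^*\cdot\gamma\in\bz$ to collapse the sum. Your two departures are minor refinements rather than a different approach: you derive the group law from the $\mathcal T$-periodicity of the folding map $y$ instead of the paper's explicit $\gamma_1+\gamma_2=\gamma(x+t_1+t_2)$ bookkeeping, and you obtain strong continuity by conjugating to the multiplication group $\hat U(t)$ on $L^2(\mu)$ via \eqref{equla} together with Theorem \ref{th5.9} and dominated convergence, where the paper only sketches a direct approximation argument modeled on the classical translation group (your version is legitimate in context, since Theorem \ref{th5.9} makes $\mathscr F$ unitary onto $L^2$ of the pair measure, and is arguably the more complete treatment of that sub-step).
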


\begin{proof}
	Let $t\in\brr^d$. We check that $U(t)$ is isometric. For $\gamma\in\mathcal T$, denote
	$$\Omega_\gamma:=\{x\in\Omega: \gamma(x+t)=\gamma\}=\{x\in\Omega: x+t-\gamma\in\Omega\}.$$ 
	Since $\Omega$ tiles with $\mathcal T$, the sets $\{\Omega_\gamma : \gamma\in\mathcal T\}$ form a partition of $\Omega$. Also, the sets $\{\Omega_\gamma+t-\gamma: \gamma\in\mathcal T\}$ form a partition of $\Omega$. Indeed, if $(\Omega_\gamma+t-\gamma)\cap (\Omega_{\gamma'}+t-\gamma')\neq \ty$ for $\gamma\neq\gamma'$ in $\mathcal T$, then 
	$(\Omega-\gamma)\cap (\Omega-\gamma')\neq \ty$ (up to measure zero, of course), and this is a contradiction because $\mathcal T$ is a group and $\Omega$ tiles with $\mathcal T$. 
	
	To prove that these sets cover $\Omega$, let $x\in\mathcal T$. Since $\Omega$ tiles with $\mathcal T$ and $\mathcal T$ is a subgroup, there exists $y\in\Omega$ and $\gamma\in\mathcal T$ such that $x-t=y-\gamma$. Then $y+t-\gamma=x$ so $y\in\Omega_\gamma$ and $x=y+t-\gamma\in\Omega_\gamma+t-\gamma$. 
	
	We check that $U(t)$ is isometric. For $f\in L^2(\Omega)$, 
	$$\|U(t)f\|^2=\int_\Omega|f(x+t-\gamma(x+t))|^2\,dx=\sum_{\gamma\in\mathcal T}\int_{\Omega_\gamma}|f(x+t-\gamma)|^2\,dx$$
	$$=\sum_{\gamma\in\mathcal T}\int_{\Omega_\gamma+t-\gamma}|f(u)|^2\,du=\|f\|^2.$$
	
	Next, we check that $U(t)$ has the group property. Let $t_1,t_2\in\brr^d$, $f\in L^2(\Omega)$ and $x\in\Omega$. Let $\gamma_1:=\gamma(x+t_1)$ so that $x+t_1-\gamma_1\in\Omega$. Then 
	$$U(t_1)U(t_2)f(x)=U(t_2)f(x+t_1-\gamma_1).$$
	Let $\gamma_2:=\gamma((x+t_1-\gamma_1)+t_2)$ so that $x+t_1-\gamma_1+t_2-\gamma_2\in\Omega$. This means that $\gamma_1+\gamma_2=\gamma(x+t_1+t_2)$ and 
	$$U(t_2)f(x+t_1-\gamma_1)=f(x+t_1-\gamma_1+t_2-\gamma_2)=U(t_1+t_2)f(x).$$
	Thus $\{U(t)\}$ has the group property and $U(t)$ is an invertible isometry so it is unitary.

	The fact that $\{U(t)\}$ is strongly continuous can be proved as for the classical group of translations $\{T(t)\}$: prove first that, for continuous compactly supported functions $\varphi$ on $\Omega$, $U(t)\varphi\rightarrow U(t_0)$ as $t\to t_0$; then use an approximation argument. 
	
	Note also that if $x\in\Omega$ and $x+t\in\Omega$ then $U(t)f(x)=f(x+t)$, so $\{U(t)\}$ is a group of local translations on $\Omega$. 
	
	We check the relation \eqref{equla}, that the Fourier transform of $U(t)$ is a multiplication operator on $\mathcal T^*$. Let $t\in\brr^d$, $f\in L^2(\Omega)\cap L^1(\Omega)$ and $\gamma^*\in\mathcal T^*$. Using $\gamma^*\cdot\gamma(x+t)\in\bz$, we have
	$$\mathscr FU(t)f(\gamma^*)=\int_\Omega f(x+t-\gamma(x+t))e^{-2\pi i\gamma^*\cdot x}\,dx$$$$=\int_\Omega f(x+t-\gamma(x+t))e^{-2\pi i\gamma^*\cdot(x+t-\gamma(x+t))}e^{2\pi i\gamma^*\cdot t}\,dx$$
	$$=e^{2\pi i \gamma^*\cdot t}\sum_{\gamma}\int_{\Omega_\gamma}f(x+t-\gamma)e^{-2\pi \gamma^*\cdot (x+t-\gamma)}\,dx$$$$=e^{2\pi i\gamma^*\cdot t}\sum_{\gamma\in\mathcal T}\int_{\Omega_\gamma+t-\gamma}f(u)e^{-2\pi i\gamma^*\cdot u}\,du=e^{2\pi i\gamma^*\cdot t}\mathscr Ff(\gamma^*).$$
\end{proof}

{\bf Conflict of interest.} On behalf of all authors, the corresponding author states that there is no conflict of interest.
\bibliographystyle{alpha}	

{\bf Data Availability Statement.} No new data were created or analyzed in this study. Data sharing is not applicable to this article.

{\bf Funding.} No funds, grants, or other support were received during the preparation of this manuscript.

{\bf Author Contribution.} All authors contributed equally to the study conception, design, data collection, analysis, and manuscript preparation. Both authors read and approved the final manuscript.

\end{document}